\newcommand{\ZZ}{\mathbb{Z}}
\newcommand{\PP}{\mathbb{P}}
\newcommand{\gm}{{\mathbb{G}}_m}
\renewcommand{\AA}{\mathbb{A}}
\newcommand{\kk}{\Bbbk}
\newcommand{\CC}{\mathbb{C}}
\newcommand{\bR}{\mathbf{R}}
\newcommand{\bL}{\mathbf{L}}
\newcommand{\cD}{\mathcal{D}}
\newcommand{\cG}{\mathcal{G}}
\newcommand{\cH}{\mathcal{H}}
\newcommand{\cK}{\mathcal{K}}
\newcommand{\cL}{\mathcal{L}}
\newcommand{\cM}{\mathcal{M}}
\newcommand{\cN}{\mathcal{N}}
\newcommand{\cO}{\mathcal{O}}
\newcommand{\cT}{\mathcal{T}}
\newcommand{\cX}{\mathcal{X}}
\newcommand{\cXn}{\cX_{n,w}}
\newcommand{\cXaf}{\mathcal{X}^{\text{af}}}
\newcommand{\cXinf}{\mathcal{X}^\infty}
\newcommand{\cY}{\mathcal{Y}}
\newcommand{\cYn}{\cY_{n,w}}
\newcommand{\cZ}{\mathcal{Z}}
\newcommand{\cZbar}{\bar{\mathcal{Z}}}
\newcommand{\cZinf}{\mathcal{Z}^\infty}
\newcommand{\xx}{\underline{x}}
\newcommand{\fia}{\varphi_{\alpha}}
\newcommand{\fiai}{\varphi_{\alpha}^{-1}}
\newcommand{\D}{\operatorname{D}^{\operatorname{b}}}
\newcommand{\Dc}{\operatorname{D}_{\operatorname{c}}^{\operatorname{b}}}
\newcommand{\Drh}{\operatorname{D}_{\operatorname{rh}}^{\operatorname{b}}}
\newcommand{\dd}{\partial}
\newcommand{\ra}{\rightarrow}
\newcommand{\lra}{\longrightarrow}
\newcommand{\hra}{\hookrightarrow}
\newcommand{\Hom}{\operatorname{Hom}}
\newcommand{\Ext}{\operatorname{Ext}}
\newcommand{\DR}{\operatorname{DR}}
\newcommand{\rk}{\operatorname{rk}}
\newcommand{\id}{\operatorname{id}}
\newcommand{\cancel}{\operatorname{cancel}}
\renewcommand{\deg}{\operatorname{deg}}
\newcommand{\FT}{\operatorname{FT}}
\newcommand{\Proj}{\operatorname{Proj}}
\newcommand{\Spec}{\operatorname{Spec}}
\newcommand{\dropp}{\operatorname{drop}}
\newcommand{\FLoc}{\operatorname{FLoc}}
\newcommand{\NC}{\operatorname{NC}}
\theoremstyle{definition}
\newtheorem{defi}{Definition}[section]}
\theoremstyle{remark}
\newtheorem{nota}[defi]{Remark}}
\newtheorem{teo}[defi]{Theorem}
\newtheorem{prop}[defi]{Proposition}
\newtheorem{lema}[defi]{Lemma}
\newtheorem{coro}[defi]{Corollary}
\theoremstyle{remark}
\begin{document}

\title{Dwork families and $\cD$-modules}
\author{Alberto Castaño Domínguez}
\thanks{The author is partially supported by FQM218, P12-FQM-2696, MTM2013-46231-P, ERDF, ANR-13-IS01-0001-01/02 and DFG grants HE 2287/4-1 and SE 1114/5-1.}
\email{alcas@hrz.tu-chemnitz.de}
\address{Fakültat für Mathematik, Technische Universität Chemnitz. 09017 Chemnitz (Germany)}
\subjclass[2010]{Primary 14D05, 14F10}
\keywords{D-modules, Gauss-Manin systems, Dwork families, hypergeometric D-modules}

\begin{abstract}
A Dwork family is a one-parameter monomial deformation of a Fermat hypersurface. In this paper we compute algebraically the invariant part of its Gauss-Manin cohomology under the action of certain subgroup of automorphisms. To achieve that goal we use the algebraic theory of $\cD$-modules, especially one-dimensional hypergeometric ones.
\end{abstract}

\maketitle

\section{Introduction}

Fix $\kk$ to be an algebraically closed field of characteristic zero. Let $n$ be a positive integer and let $w=(w_0,\ldots,w_n)\in\ZZ_{>0}^{n+1}$ be an $(n+1)$-tuple of positive integers such that $\gcd(w_0,\ldots,w_n)=1$. We will denote by $d_n$ the sum $\sum w_i\geq n+1$. Let us consider the family, parameterized by $\lambda\in\AA^1=\Spec\left(\kk[\lambda]\right)$, of projective hypersurfaces of $\PP^n=\Proj\left(\kk[x_0,\ldots,x_n]\right)$ given by:
$$x_0^{d_n}+\ldots+x_n^{d_n}-\lambda x_0^{w_0}\cdot\ldots\cdot x_n^{w_n}=0\subset\PP^n\times\AA^1.$$
This family is an example of a Dwork family, consisting of the deformation of a Fermat hypersurface by an arbitrary monomial in every variable. We will denote it by $\cXn$. By $p_n$ we will mean the restriction of the second canonical projection $\PP^n\times\AA^1\ra\AA^1$ to $\cXn$. Any other family of the form
$$a_0x_0^{d_n}+\ldots+a_nx_n^{d_n}-b\lambda x_0^{w_0}\cdot\ldots\cdot x_n^{w_n}=0\subset\PP^n\times\AA^1,$$
which a priori may appear more general, is actually isomorphic to one of the previous by the homography $x_i\mapsto\eta_ix_i$, $\lambda\mapsto b\prod\eta_i^{-w_i}\lambda$, where $\eta_i$ is a $d_n$-th root of $a_i$.

\subsection{A bit of history}

Dwork families were introduced by Dwork, especially the case $\cX_{w,\underline{1}}$, as a very interesting tool to understand the behaviour of the zeta function of a hypersurface defined over a finite field under a deformation (cf., for example, \cite{DwEstoc}). Until the nineties, the most important contributions to the knowledge of this families were due to Katz, in \cite{Ka70, KaSommes}, where respectively, he studied the differential equation satisfied by the class of the form generating the middle Gauss-Manin cohomology and showed the very interesting relation between that cohomology in the $\ell$-adic context and the $L$-function of generalized Kloosterman sums. For more information about the origins of the work with Dwork families, see the historical introduction of \cite{KaAnother}.

Since that discovery, very little was done with Dwork families until the introduction of mirror symmetry. Some of such families were a nice example of Calabi-Yau manifolds and turned out to be of interest to physicists working in that field (cf. \cite{CDR}). Because of this new interest on them, many author rediscovered Dwork families as a good tool to deal with other problems. We can cite, for instance, \cite{DouSab}, studying the quantum cohomology of weighted $n$-dimensional projective spaces or \cite{HST}, tackling the Sato-Tate conjecture in generality, among other results.

The problem of calculating some part of the Gauss-Manin cohomologies of the Dwork family $\cXn$ or the whole of it has been also addressed. We can classify the diverse works dealing with such problem by its arithmetical setting and its way of attacking the problem. From the $\ell$-adic point of view, in \cite{KaAnother} all the cohomologies of $\cXn$ are computed by using all the power of the étale machinery. The same techniques are used in \cite{RW}, as a way to find the moment zeta functions for the original Dwork family and its quotient under the action of certain group of automorphisms. These works are quite complete and make the most of the $\ell$-adic approach.

In the $p$-adic setting, a significative part of the work is carried out referring to Dwork's classical methods of $p$-adic analysis. We can cite \cite{Kl}, giving the expression of the matrix of the associated integrable Gauss-Manin connection using a mixture of rigid and Monsky-Washnitzer cohomology theories and Dwork's techniques, or \cite{Yu}, studying the variation of the unit root of the original Dwork family, also with methods going back to Dwork with a touch of crystalline cohomology.

Over the complex numbers, we have a different approach, computational but still after Dwork and Katz, in \cite{Sal}. Also Yu and Katz treat this setting in their respective papers \cite{Yu} and \cite{KaAnother} above referred to. The former finds an horizontal section of the Gauss-Manin connection, whereas the latter expresses the same invariant part of the Gauss-Manin cohomologies issued in this paper in terms of hypergeometric $\cD$-modules using analytical transcendental methods.

In this paper we deal with that last problem, but in a purely algebraic way by using the power of the theory of $\cD$-modules over any algebraically closed field of characteristic zero. This work can be seen as a first step in two directions. Namely, one is finding, as Katz and Kloosterman do in their papers but in an algebraic way as in the present paper, the expression of all the Gauss-Manin cohomologies of a Dwork family, not only some invariant part. The second is the extension of this work to the $p$-adic world, with the help of a good theory of $p$-adic cohomology (cf. \cite{AbeCaro} or \cite{MebConstr} to see the two existing approaches).

Before ending this introduction, let us introduce a couple of conventions. An algebraic variety, or just variety, will mean for us a separated, finite type, equidimensional scheme over $\kk$, reducible or not. We will denote by $\pi_i$ the $i$-th canonical projection from a product, and by $\pi_X$ the projection to a point from any variety $X$.

\subsection{Some notions on $\cD$-modules}

Before going on, we will digress a little and recall some notions from algebraic $\cD$-module theory that will be useful to state properly the main results of this paper (the references used here in this regard are \cite{HTT,KaESDE,Meb6Oper}). Associated with a smooth algebraic variety $X$, let us denote by $\cO_X$ and $\cD_X$ its structure sheaf and the sheaf of differential operators on it, respectively. We will denote by $\D(\cD_X)$ the derived category of bounded complexes of (left) $\cD_X$-modules. We can also define the derived categories of bounded complexes of $\cD_X$-modules with coherent, holonomic and regular holonomic cohomologies, each of them being a full triangulated subcategory of the precedent. Whenever we talk about a complex of $\cD_X$-modules, we will understand it as an object of one of the derived categories above, which will be clear from the context.

Associated with a morphism of smooth varieties $f:X\ra Y$, we will denote the usual direct and inverse image functors by $f_+$ and $f^+$, and the extraordinary direct and inverse images by $f_!:=\mathbb{D}_Yf_+\mathbb{D}_X$ and $f^!:=\mathbb{D}_Yf^+\mathbb{D}_X$, respectively, $\mathbb{D}_Z$ being the duality functor on a variety $Z$. Regarding tensor products, we will write $\otimes_{\cO_X}^{\bL}$ and $\boxtimes$ for the interior and exterior ones, respectively. We will also use extensively the intermediate extension, a third image functor under some special conditions. For any open subvariety $U$ of $X$ consider the inclusion $j:U\ra X$. If $j$ is affine, the intermediate (or middle) extension of a holonomic $\cD$-module $\cM$ is the image of the canonical morphism $j_!\cM\ra j_+\cM$; it is denoted by $j_{!+}\cM$ (see more details in \cite[\S 3.4]{HTT}).

Recall that every holonomic $\cD$-module is both Noetherian and Artinian and then of finite length, so all of them admit a composition series. This allows us to define the semisimplification of a holonomic $\cD$-module as the direct sum of all of its composition factors.

Another relevant functor in this work is Fourier (or Fourier-Laplace) transformation. It is a very important functor in $\cD$-module theory, of which we will only treat here its absolute one-dimensional version. It can be defined as an endofunctor of $\D(\cD_{\AA^1})$ given by $\FT=\pi_{2,+}\left(\pi_1^+\bullet \otimes_{\cO_{\AA^2}}^{\bL}\cL\right)$, where $\cL$ is the holonomic $\cD_{\AA^2}$-module $\cD_{\AA^2}/(\dd_x-y,\dd_y-x)$.

Fourier-Laplace transformation preserves coherence over $\cD_{\AA^1}$ and holonomy but not regularity; it is an equivalence of categories when defined on the associated full triangulated subcategories of $\D(\cD_{\AA^1})$ with the first two properties.

We will say that a $\cD$-module is constant if it is formed as a successive extension of structure sheaves. Usually constant modules are not quite interesting, and it would be good to have some way of isolating the ``nonconstant'' part of a $\cD$-module. Here it is a way to do so in the concrete one-dimensional setting that is of interest to us.

\begin{defi}\label{nonconstant}
Let $\cM$ be a holonomic $\cD_{\AA^1}$-module, and let $j$ be the canonical inclusion of $\gm$ into $\AA^1$. We define its nonconstant part as
$$\NC\cM=\FT^{-1}j_{!+}j^+\FT\cM.$$
In the same vein, given a holonomic $\cD_{\gm}$-module $\cM$, we will define its nonconstant part as
$$\NC_0\cM=j^+\NC j_{!+}\cM.$$
\end{defi}

\begin{nota}
The functors $\NC$ and $\NC_0$ are exact in the category of holonomic $\cD$-modules, respectively over $\AA^1$ and $\gm$, and preserve irreducible objects. An irreducible holonomic $\cD$-module has zero nonconstant part if and only if it is $\cO$. Therefore, for any holonomic $\cD$-module $\cM$ its nonconstant composition factors and those of $\NC\cM$ are the same.
\end{nota}

Recall also that, if $X$ is of dimension $n$ and $\cM$ is a complex of holonomic $\cD_X$-modules, the Euler-Poincaré characteristic of $\cM$ is
$$\chi(\cM)=(-1)^n\sum_{k}(-1)^k\dim\cH^k\pi_{X,+}\cM.$$

Hypergeometric $\cD$-modules are a crucial ingredient of this work. Even though we will explain them in detail in section \ref{hyper}, let us just define them here.

\begin{defi}\label{hipergeometricos}
Let $(n,m)$ be a couple of nonnegative integers, and let $\alpha_1,\ldots,\alpha_n$ and $\beta_1,\ldots,\beta_m$, respectively, $n$ and $m$ elements of $\kk$ and $\gamma\in\kk^*$. The hypergeometric $\cD$-module associated with $\gamma$, the $\alpha_i$ and the $\beta_j$ is defined as the quotient of $\cD_{\gm}$ with the ideal generated by the so-called hypergeometric operator
$$\gamma\prod_{i=1}^n(D-\alpha_i)- \lambda\prod_{j=1}^m(D-\beta_j).$$
We will denote it by $\cH_\gamma(\alpha_1,\ldots,\alpha_n;\beta_1,\ldots,\beta_m)$, or in an abridged way, $\cH_{\gamma}(\alpha_i;\beta_j)$.
\end{defi}

\subsection{Contents of the paper}\label{contents}

Denote by $(\alpha_1,\ldots,\alpha_n)$ and $(\beta_1,\ldots,\beta_m)$ two unordered $n$- and $m$-tuples (multisets), respectively, of elements from $\kk$. We will define their cancelation, denoted by $\cancel(\alpha_1,\ldots,\alpha_n;\beta_1,\ldots,\beta_m)$, as the result of eliminating from both tuples the elements that they share modulo $\ZZ$, obtaining shorter disjoint lists (cf. \cite[6.3.10]{KaESDE}). In other words, assume that, up to a reordering on the $\alpha_i$ and the $\beta_j$, there exists an integer $0\leq r\leq\min(n,m)$ such that
$$\begin{array}{rcll}
\alpha_k&\equiv&\beta_k \mod \ZZ& \text{ for }k\leq r,\\
\alpha_k&\not\equiv&\beta_k \mod \ZZ&\text{ for } k>r.
\end{array}$$
Then, under this assumption,
$$\cancel(\alpha_1,\ldots,\alpha_n;\beta_1,\ldots,\beta_m)= (\alpha_{r+1},\ldots,\alpha_n;\beta_{r+1},\ldots,\beta_m).$$
If $k=\min(n,m)$ at least one of the tuples would be empty. Note that the subtractions remove only an element of the first tuple for each similar element of the second one, and thus the difference between the lengths of the resulting tuples is still the same.

There exists an important subgroup of the group of automorphisms of $\cXn$, being the one which will provide us our main object of study. Let
$$\mu_{d_n}^0=\left\{(\zeta_0,\ldots,\zeta_n)\in\mu_{d_n}^{n+1}\bigg| \prod_{i=0}^n\zeta_i^{w_i}=1\right\},$$
and let $G=\mu_{d_n}^0/\Delta$, the quotient of $\mu_{d_n}^0$ by the diagonal subgroup, acting linearly over $\cXn$ by componentwise multiplication on the projective variables.

Let $X$ be a quasi-projective variety and $U$ an open subvariety of the affine line. Given a smooth family of hypersurfaces $\cX\subset X\times U$, together with the restriction of the second canonical projection $p:\cX\ra U$, we will define the Gauss-Manin complex of $\cX$ relative to $U$ as the direct image $p_+\cO_{\cX}$, understood as a complex of regular holonomic $\cD_U$-modules. The cohomologies of such complex, specially the middle one, are the Gauss-Manin cohomologies of $\cX$ (relative to $U$).

Our goal is to calculate the invariant part under the action of $G$ of the Gauss-Manin complex of $\cXn$ relative to the parameter $\lambda$, or in other words, $\left(p_{n,+}\cO_{\cXn}\right)^G$. This must be understood as follows.

$G$ acts linearly over $\cXn$, and in fact, over $\PP^n\times\AA^1$. Then for any $G$-equivariant $\cD_{\cXn}$- or $\cD_{\PP^n\times\AA^1}$-module (cf. \cite[Def. 3.1.3]{KashEquiv}, noting that quasi-$G$-equivariance and $G$-equivariance coincide due to the finiteness of $G$) we have an action of the Lie algebra associated with $G$, which, by the latter being finite and abelian, is the commutative group algebra $\kk[G]$. In our case, $\cO_{\cXn}$ is a $G$-equivariant $\cD_{\cXn}$-module, and since $p_n$ is a $G$-equivariant morphism by definition, the direct image $p_{n,+}$ induces a $G$-equivariant structure on $p_{n,+}\cO_{\cXn}$ (cf. [\textit{op. cit.}, p. 169]). In this sense, whenever we talk about the invariant part of a $\cD$-module $\cM$, we will understand its image by the functor $\Hom_{\kk[G]}(\kk,\bullet)$.

In fact, if we focus on the nonconstant part of the cohomologies of $\left(p_{n,+}\cO_{\cXn}\right)^G$, following Definition \ref{nonconstant}, we can state our first main result:

\begin{teo}\label{interesante}
Let $j$ be the canonical inclusion from $\gm$ to $\AA^1$. Let $\cH_n$ be the irreducible hypergeometric $\cD$-module
$$\cH_{\gamma_n} \left(\cancel\left(\frac{1}{w_0},\ldots,\frac{w_0}{w_0},\ldots, \frac{1}{w_n},\ldots,\frac{w_n}{w_n}; \frac{1}{d_n},\ldots,\frac{d_n}{d_n}\right)\right).$$
Then only the zeroth cohomology of $\left(p_{n,+}\cO_{\cXn}\right)^G$ has a nonconstant part, which is $j_{!+}\iota_n^+\cH_n$, with $\iota_n:\gm\ra\gm$ being the map given by $z\mapsto z^{-d_n}$.
\end{teo}

As we said in the introduction, this result was obtained by Katz and Kloosterman in different contexts (cf. \cite[Thm. 5.3, Cor. 8.5]{KaAnother} and \cite[Prop. 5.3]{Kl}, respectively). In that sense, Theorem \ref{interesante} is the analogous statement in the algebraic case. However, we can go further and be more precise than them, as the following theorem shows.

For each $n$, let us denote by $C_n$ the following set:
$$C_n=\left\{k/d_n\,:\,k/d_n=j/w_i\text{, for some }i=0,\ldots,n, j=1,\ldots,w_i-1,k=1,\ldots,d_n-1\right\}.$$
Note that this set will be empty if and only if $d_n$ is prime to each $w_i$.

Denote by $\bar{K}_n$ the Gauss-Manin complex of the smooth affine part of the quotient $\cXn/G$ (see Section \ref{problema} for more details). We will prove first Theorem \ref{teorema1} stating that $\left(p_{n,+}\cO_{\cXn}\right)^G$ and $\bar{K}_n$ differ only in some constant part, so we can focus on calculate $\bar{K}_n$. We can give more information about it in our next main theorem, in such a way that Theorem \ref{interesante} can be seen as a corollary of it:

\begin{teo}\label{teorema2}
The restriction $j^+\bar{K}_n$ is the inverse image under $\iota_n$ of another complex $K_n\in\Drh(\cD_{\gm})$, for which the following holds: $\cH^iK_n=0$ if $i\notin\{-(n-1),\ldots,0\}$, $\cH^iK_n\cong\cO_{\gm}^{\binom{n}{i+n-1}}$ as long as $-(n-1)\leq i\leq-1$, and in degree zero we have the exact sequence
$$0\lra\cG_n\lra \cH^{0}K_n\lra\cO_{\gm}^n\lra0,$$
$\cG_n$ lying in an exact sequence of the form
$$0\lra\cH_n\lra\cG_n\lra\bigoplus_{\alpha\in C_n}\cK_{\alpha}\lra0,$$
where $\cK_\alpha$ stands for the Kummer $\cD$-module $\cD_{\gm}/(D-\alpha)$.
\end{teo}

The statement of the theorem could seem a vague definition of a new complex $K_n$, but its point is giving in detail the expression of $\bar{K}_n$ in a simple way. Nevertheless, we will define $K_n$ properly in section \ref{problema}.

Note that \cite[Thm. 2.1]{RW} is the $\ell$-adic analogue of Theorem \ref{teorema2} for the original Dwork family, without mentioning the second exact sequence. In this sense our statement about the invariant part of the Gauss-Manin complex of the Dwork family is finer than any other result in any other context.

The paper is structured as follows. Section \ref{hyper} is a compilation of basic results on hypergeometric $\cD$-modules that will be of use in the following three sections. In section \ref{problema} we introduce properly our problem and start to tackle our main results. Next we exploit in section \ref{induccion} the power of the inductive method used to prove most of Theorem \ref{teorema2}, whereas in section \ref{fourier} we finish its proof by finding the exponents of $\cG_n$ at the origin and infinity.

\textbf{Acknowledgements.} This work addresses a problem introduced by Steven Sperber to the author, to whom he is deeply grateful. The results of this paper conform an improvement of the main part of the doctoral thesis of the author, advised by Luis Narv\'{a}ez Macarro and Antonio Rojas Le\'{o}n. The author wants to thank the latter for having the idea of the inductive framework, and both of them for their support and encouragement. He is also grateful to Claude Sabbah for suggesting him the use of Fourier transformation and pointing him out the reference \cite{DouSab}, and an anonymous referee for the useful comments given to improve the paper. He wants to thank as well his son, Alberto Luna Castaño, born during the early writing of this paper, for making him learning many new things and improving his productivity. This work is dedicated to him.

\section{Hypergeometric $\cD$-modules}\label{hyper}

In this section we introduce hypergeometric $\cD$-modules, studying their basic properties and their parameters, ending by characterizing such a $\cD$-module by them. Almost any result stated here is inspired by or directly equal to some other of \cite[\S3]{KaESDE}. In fact, apart from Katz's reference, we have not found in the literature an algebraic approach to hypergeometric $\cD$-modules as his.

Throughout the text, we will remain with one-dimensional $\cD$-modules. From now on, we will denote by $D_\lambda$ the product $\lambda\dd_\lambda$, omitting the variable as long as it is clear from the context.

\begin{prop}\label{charcero}
Recall that a Kummer $\cD$-module is the quotient $\cK_\alpha=\cD_{\gm}/(D-\alpha)$, for any $\alpha\in\kk$. Let $\cM$ be a holonomic $\cD_{\gm}$-module. Then its Euler-Poincaré characteristic is zero if and only if its composition factors are Kummer $\cD$-modules.
\end{prop}
\begin{proof}
The Euler-Poincaré characteristic of a holonomic $\cD_{\gm}$-module is additive and never positive, so we just need to show the equivalence when $\cM$ is irreducible.

If $\cM=\cK_\alpha$, since it has no singularities at $\gm$ we can apply Deligne's formula at \cite[2.9.8.2]{KaESDE}. In this case, $\cM$ has a regular singularity both at zero and infinity, so $\chi(\cM)=0$.

Suppose then that $\chi(\cM)=0$. By [\textit{op. cit.}, Cor. 2.9.6.1], we have that $\cM$, being irreducible, coincides with $j_{!+}j^+\cM$, for any open subvariety $U\stackrel{j}{\hra} \gm$ of $\gm$ in which $\cM$ is a module with integrable connection. Then we can apply now Deligne and Gabber's formula [\textit{op. cit.}, Thm. 2.9.9]. Since $\chi(\cO_{\gm})=0$, we have necessarily that all the singularities of $\cM$ are regular, and, following Katz's notation, $\dropp_p\cM=0$ at every point of $\gm-U$. Therefore $\cM$ can be seen as a module with integrable connection on $\gm$, so there exists a square matrix $A$ of elements of $\kk\left[\lambda^{\pm}\right]$ of order $r$ such that $\cM\cong\cD_{\gm}^r/(D-A)$. Now we can apply to this connection the theorem of the cyclic vector \cite[Thm. III.4.2]{DGS} to obtain that $\cM$ is isomorphic to $\cD_{\gm}/(P(\lambda,D))$, with $P$ being a nonconstant polynomial of $\kk\left[y^\pm,t\right]$. (Note that although the proof of the theorem of the cyclic vector requires the connection to be defined over a function field of the form $\kk(\lambda)$, it only actually needs that $\lambda$ is invertible, as in our case, providing a global proof in $\gm$.) Now $\cM$ has a regular singularity at zero, so the degree in $\lambda$ of the coefficients of $P$ cannot be negative (cf. \cite[1.1.2]{Deligne}). Taking $\mu=\lambda^{-1}$, we have $D_\mu=-D_\lambda$, so by the same argument, the order at $\mu$ of the coefficients of $P$ must be zero at least too, and thus $P$ is a polynomial only in $D$. Since $\kk$ is algebraically closed and $\cM$ is irreducible, $\deg P(D)=1$, for if it were greater we would have a composition factor of $\cM$ consisting of the quotient of $\cD_{\gm}$ by the left ideal generated by any factor of $P(D)$. In conclusion, $\cM\cong\cD_{\gm}/(D-\alpha)$.
\end{proof}

We have just characterized, up to semisimplification, the holonomic $\cD_{\gm}$-modules of Euler-Poincaré characteristic zero. Regarding characteristic $-1$, we can claim that the composition factors of any holonomic $\cD_{\gm}$-module of Euler-Poincaré characteristic $-1$ will be a finite amount of Kummer $\cD$-modules and an irreducible $\cD_{\gm}$-module of characteristic $-1$, which can be an irreducible punctual $\cD_{\gm}$-module supported at any point of $\gm$, also called delta $\cD$-module. Irreducible hypergeometric $\cD$-modules of positive rank, as we will see, are the irreducible nonpunctual holonomic $\cD_{\gm}$-modules of characteristic $-1$. Let us return to them.

\begin{nota}
The special type $(n,m)=(0,0)$ in Definition \ref{hipergeometricos} corresponds to delta $\cD$-modules on $\gm$, since
$$\cH_\gamma\left(\emptyset;\emptyset\right)=\cD_{\gm}/(\gamma-\lambda).$$
On the other hand, is easy to check that $\cH_\gamma(\alpha_i;\beta_j) \otimes_{\cO_{\gm}}\cK_\eta\cong \cH_\gamma(\alpha_i+\eta;\beta_j+\eta)$. In this paper we will only focus on the case $n=m$, so that $\cH_\gamma(\alpha_i;\beta_j)$ has only regular singularities, at the origin, infinity and $\gamma$, where the Jordan decomposition of its local monodromy (when $\kk=\CC$) is a pseudoreflection, that is, the space of formal meromorphic solutions $\Hom(\cH,\kk((x_{\gamma})))$ is $(n-1)$-dimensional (cf. \cite[Lem. 2.9.7]{KaESDE}.
\end{nota}

Any hypergeometric $\cD$-module is of Euler-Poincaré characteristic $-1$ (\hspace{-.5pt}\cite[Lem. 2.9.13]{KaESDE}) and its behaviour at $\PP^1$ can be fully understood from its parameters, as we are going to show in the following results.

\begin{prop}[Irreducibility]
\emph{(cf. \cite[Props. 2.11.9, 3.2]{KaESDE})} Let $\cH:=\cH_\gamma(\alpha_i;\beta_j)$ be a hypergeometric $\cD$-module. It is irreducible if and only if $\alpha_i-\beta_j$ is not an integer for any pair $(i,j)$ of indexes.
\end{prop}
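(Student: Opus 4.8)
The plan is to prove the two implications separately, after a common set of reductions. First I would note that tensoring by a Kummer module $\KK_\eta$, translation $h_{\eta,+}$ and inversion $\inv_+$ are all auto-equivalences of the category of holonomic $\DD_{\gm}$-modules, so they preserve irreducibility; and by the identities recorded above, namely $\HH_\gamma(\alpha_i;\beta_j)\otimes\KK_\eta\cong\HH_\gamma(\alpha_i+\eta;\beta_j+\eta)$, $h_{\eta,+}\HH_\gamma(\alpha_i;\beta_j)\cong\HH_{\gamma\eta}(\alpha_i;\beta_j)$ and $\inv_+\HH_\gamma(\alpha_i;\beta_j)\cong\HH_{(-1)^{n+m}/\gamma}(-\beta_j;-\alpha_i)$, each of them carries a hypergeometric module to a hypergeometric module for which the validity of the condition ``$\alpha_i-\beta_j\in\Z$ for some $(i,j)$'' is unchanged (inversion sends the difference $\alpha_i-\beta_j$ to $(-\beta_j)-(-\alpha_i)$, the same number). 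Using $\inv_+$ we may thus assume $n\geq m$, and using a Kummer twist we may normalize any prescribed coincidence $\alpha_{i_0}-\beta_{j_0}\in\Z$ so that $\beta_{j_0}=0$.

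For the ``only if'' direction I would argue by contraposition. Assume $\alpha_{i_0}-\beta_{j_0}\in\Z$, normalized and relabelled so that $\beta_m=0$. When in fact $\alpha_n=\beta_m$ the point is transparent: the factors $D-\alpha_i$ commute among themselves and $D-\alpha_n$ may be pulled to the right of both products, so the hypergeometric operator factors as $P=P'\cdot(D-\alpha_n)$ with $P'$ the hypergeometric operator of type $(n-1,m-1)$; hence $\DD_{\gm}P\subseteq\DD_{\gm}(D-\alpha_n)$ and $\DD_{\gm}(D-\alpha_n)/\DD_{\gm}P$ is a submodule of $\HH$ with quotient $\KK_{\alpha_n}$, which is both nonzero and proper since $\chi(\HH)=-1\neq0=\chi(\KK_{\alpha_n})$ forces $\HH\not\cong\KK_{\alpha_n}$. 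A general integer difference $\alpha_{i_0}-\beta_{j_0}$ is reduced to this model by the analogous factorizations of hypergeometric operators established in \cite[Proposition~2.11.9]{KaESDE} (for instance when additionally $\alpha_n=0$ one gets $P=P'\cdot D$ and the Kummer quotient $\KK_0=\OO_{\gm}$). In all cases $\HH$ is reducible.

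For the ``if'' direction, assume $\alpha_i-\beta_j\notin\Z$ for every $(i,j)$. By the description of the singularities of $\HH$ given above there is an open immersion $j\colon U\hra\gm$ (with $U=\gm$ if $n\neq m$ and $U=\gm-\{\gamma\}$ if $n=m$) such that $j^+\HH$ is an integrable connection, and moreover $\HH\cong j_{!+}j^+\HH$ unless some composition factor of $\HH$ is a Kummer module or a delta at $\gamma$; but a Kummer factor $\KK_\delta$ would have $\delta$ among the exponents of $\HH$ at $0$ (so $\delta\equiv\alpha_i$ for some $i$) and $-\delta$ among those at $\infty$ (so $\delta\equiv\beta_j$ for some $j$), contradicting the hypothesis, and a delta at $\gamma$ cannot occur alone because $\chi(\HH)=-1$, so those degenerate cases are excluded and $\HH\cong j_{!+}j^+\HH$. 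Since $\supp\HH$ is not contained in a proper closed subvariety, Proposition \ref{irred1} reduces us to showing $j^+\HH$ is irreducible on $U$. Suppose it had a proper nonzero subconnection $\NN$ with quotient $\QQ$. The key local input is that exactly one of $\NN,\QQ$ is regular on all of $\PP^1$: if $n=m$, the local monodromy of $\HH$ at $\gamma$ is a \emph{nontrivial} pseudoreflection (nontrivial since $\chi(\HH)=-1$), so its restriction to $\NN$ is either trivial or again a pseudoreflection, and in the latter case it acts trivially on $\QQ$; if $n>m$, the positive-slope part of $\HH$ at $\infty$ is a single irreducible formal block (slope $1/(n-m)$ with $\gcd(1,n-m)=1$), so it lies entirely in the formal localization of $\NN$ or entirely in that of $\QQ$, leaving the other one regular at $\infty$. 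In either situation, the regular one among $\NN,\QQ$ is a connection on $\gm$ with only regular singularities at $0$ and $\infty$, hence of Euler characteristic $0$; by Proposition \ref{charcero} its composition factors are Kummer modules, and the exponents of such a factor at $0$ and at $\infty$ lie, respectively, among the $\alpha_i$ and the $\beta_j$, yielding a forbidden relation $\alpha_i\equiv\beta_j\pmod{\Z}$. This contradiction proves $j^+\HH$, and hence $\HH$, irreducible.

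The step I expect to be the main obstacle is the local bookkeeping at infinity (and at $\gamma$ when $n=m$): one must argue cleanly, via the formal Jordan decomposition of Proposition \ref{Jordan} and Turrittin--Levelt theory, that the positive-slope part is an indivisible irreducible block so that a subconnection either swallows it whole or avoids it entirely, and one has to track carefully which exponents survive on the regular piece --- the $\alpha_i$ at $0$ versus the $-\beta_j$ at $\infty$, where the sign conventions obtained through the coordinate $\mu=\lambda^{-1}$ are easy to get wrong. By contrast, the reductions and the reducibility direction are essentially formal once the operator factorizations of \cite{KaESDE} are invoked.
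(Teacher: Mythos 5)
Your ``only if'' direction has a genuine gap. The operator-level factorization you use works only when some $\alpha_i$ and $\beta_j$ coincide \emph{on the nose} (giving $P=P'\cdot(D-\alpha_i)$), or, after commuting $\lambda$ past the $\beta$-product, when $\alpha_i=\beta_j+1$ exactly (giving $P=(D-\alpha_i)\cdot P'$). Those are the only cases: writing operators as $\sum_k\lambda^k p_k(D)$ and using $(D-c)\lambda^k=\lambda^k(D+k-c)$, one checks that $(D-c)$ is a right factor of the hypergeometric operator iff $c$ equals some $\alpha_i$ \emph{and} some $\beta_j$, and a left factor iff $c=\alpha_i=\beta_j+1$; already for type $(1,1)$ with $\alpha-\beta=2$ no first-order factor exists, although the module is reducible (it is the $+$-extension of a rank-one connection across $\gamma$, e.g.\ $\HH_1(0;-2)\cong j_+\OO_{\gm-\{1\}}$ up to twist). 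Nor can you reduce a general integer difference to the equal case by replacing parameters by congruent ones: for reducible hypergeometrics congruent parameters give non-isomorphic modules ($\HH_\gamma(0;-1)$ and $\HH_\gamma(0;-2)$ are the $!$- and $+$-extensions), and the paper's Proposition \ref{isohyp} only says what the isomorphism class determines --- invoking it here would in any case be circular. Your appeal to ``analogous factorizations established in \cite[Proposition 2.11.9]{KaESDE}'' does not fill this: that result concerns the local structure (slopes and exponents) of $\DD_{\gm}/\DD_{\gm}P$, not factorizations, and the nonzero-integer case is exactly the nontrivial content of Katz's irreducibility criterion, proved there with his shift/cancellation morphisms between hypergeometrics whose parameters differ by $1$ (equivalently by an induction through convolution with rank-one hypergeometrics). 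Since the paper itself gives no proof --- the proposition is a straight citation of \cite{KaESDE} --- this missing step is precisely what a self-contained proof would have to supply.

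The ``if'' direction is essentially correct and is the standard argument (pseudoreflection at $\gamma$ when $n=m$, indivisible slope-$1/(n-m)$ block at $\infty$ when $n>m$, forcing a sub or quotient of Euler characteristic $0$, hence with Kummer factors by Proposition \ref{charcero}, contradicting the non-congruence via exponents), but three details need repair. First, ``exactly one of $\NN,\mathcal{Q}$ is regular'' should be ``at least one'': if the pseudoreflection is a transvection whose image lies in $\NN$, both $\NN$ and $\mathcal{Q}$ are formally trivial at $\gamma$ (the contradiction still applies; the clean statement is $\dropp_\gamma\NN+\dropp_\gamma\mathcal{Q}\leq\dropp_\gamma\HH=1$ by left-exactness of $\Hom(\bullet,\kk((x_\gamma)))$). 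Second, excluding the degenerate case of the paper's structure proposition needs the generic rank $n\geq1$, not $\chi$: a $\delta_\gamma$ alone also has $\chi=-1$, and it is the presence of Kummer factors (forced by rank plus additivity of $\chi$) that your exponent argument then kills. Third, you should note that the statement ``slope-zero exponents of $\HH$ at $0$, resp.\ $\infty$, lie among the $\alpha_i$, resp.\ $\beta_j$, mod $\Z$'' must be used \emph{without} assuming irreducibility (it follows from the indicial computation for $\DD_{\gm}/\DD_{\gm}P$, i.e.\ from \cite[Proposition 2.11.9]{KaESDE}), since irreducibility is what is being proved; with the paper's convention ($D=\lambda\dd_\lambda$ in the Jordan blocks at both ends) a Kummer factor $\KK_\delta$ contributes exponent $\delta$ at both $0$ and $\infty$, which is the congruence $\alpha_i\equiv\beta_j$ you want.
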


The next proposition deals with the exponents of hypergeometric $\cD$-modules. As we will shortly see, they are an important property in order to characterize them. For more information on the exponents of a $\cD$-module in dimension one, see \cite[\S2]{CaKoszul}.

\begin{prop}[Exponents]\label{exponentes}
\emph{(cf. \cite[Cor. 3.2.2]{KaESDE})} Let $\cH:=\cH_\gamma(\alpha_i;\beta_j)$ be an irreducible hypergeometric $\cD$-module of type $(n,m)$, and fix a fundamental domain $I$ of $\kk/\ZZ$. Then,
\begin{enumerate}[i)]
\item The Jordan decomposition of the regular part of $\cH$ at the origin is $$\left(\cH\otimes\kk((\lambda))\right)_{\emph{slope=0}}\cong\bigoplus_{\alpha\in I}\kk((\lambda))[D_\lambda]/(D_\lambda-\alpha)^{n_\alpha},$$
    where $n_\alpha$ is the amount of $\alpha_i$ congruent to $\alpha$ modulo $\ZZ$.
\item The Jordan decomposition of the regular part of $\cH$ at infinity is
    $$\left(\cH\otimes\kk((\mu))\right)_{\emph{slope=0}}\cong\bigoplus_{\beta\in I}\kk((\mu))[D_\mu]/(D_\mu-\beta)^{n_\beta},$$
    where $\mu=1/\lambda$ and $n_\beta$ is the number of $\beta_i$ congruent to $\beta$ modulo $\ZZ$.
\end{enumerate}
\end{prop}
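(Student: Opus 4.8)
The plan is to prove i) by a direct computation with the formalized hypergeometric operator at the origin and to deduce ii) from i). For the reduction, recall from the Remark that $\inv^+\HH_\gamma(\alpha_i;\beta_j)\cong\HH_{(-1)^{n+m}/\gamma}(-\beta_j;-\alpha_i)$, a hypergeometric of type $(m,n)$, and observe that $\inv$ exchanges the origin with infinity while $D$ changes sign under the substitution $\lambda\mapsto1/\lambda$. Hence i) applied to $\inv^+\HH$ at the origin — where its $\alpha$-parameters are the $-\beta_j$ — yields exactly the description of $\HH$ at infinity in terms of the $\beta_j$ required by ii), the hypothesis "$m\geq n$" there being the hypothesis "$n\geq m$" of i) for the type-$(m,n)$ module $\inv^+\HH$.

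For i), write $\HH\otimes_{\OO_{\gm}}\kk((\lambda))\cong\kk((\lambda))[D]/(P)$ with $P=\gamma\prod_i(D-\alpha_i)-\lambda\prod_j(D-\beta_j)$. When $P$ is presented as $\sum_k p_k(\lambda)D^k$, every coefficient contributed by $\gamma\prod_i(D-\alpha_i)$ is a constant while every coefficient contributed by $-\lambda\prod_j(D-\beta_j)$ is divisible by $\lambda$; therefore the indicial polynomial of $P$ at $\lambda=0$ is exactly $\gamma\prod_i(D-\alpha_i)$, of degree $n$. It follows that the slope-zero part of $\HH$ at the origin is a regular $\kk((\lambda))[D]$-module of dimension $n$ — the full formalization when $n\geq m$, since then $\rk\HH=n$ — whose exponents are the classes of the $\alpha_i$ modulo $\Z$, the class $\bar\alpha$ appearing with total multiplicity $n_\alpha$. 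By the Formal Jordan decomposition lemma this slope-zero part is a direct sum of modules of the form $\operatorname{Loc}(\bar\alpha,\cdot)$.

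To upgrade this to the displayed isomorphism it suffices to show that for each class $\bar\alpha$ with $n_\alpha\geq1$ there is a single such summand. When $n\geq m$ the module $\HH$ is regular at the origin (by the structural proposition), so part iv) of the Formal Jordan decomposition lemma identifies the number of summands with exponent $\bar\alpha$ with $\dim_\kk\Hom_{\DD_{\gm}}(\HH,\operatorname{Loc}(\alpha,1))$; since $\operatorname{Loc}(\alpha,1)$ is a $\kk((\lambda))$-module, this is $\dim_\kk\Hom_{\kk((\lambda))[D]}(\kk((\lambda))[D]/(P),\operatorname{Loc}(\alpha,1))$. Writing such a morphism as $v\mapsto f\cdot v$ with $v$ a cyclic generator of $\operatorname{Loc}(\alpha,1)=\kk((\lambda))[D]/(D-\alpha)$, the condition $P\cdot(fv)=0$ reads $Q(D)f=\lambda R(D)f$ in $\kk((\lambda))$, where $Q(D)=\gamma\prod_i(D+\alpha-\alpha_i)$ and $R(D)=\prod_j(D+\alpha-\beta_j)$; on Laurent coefficients $f=\sum_k a_k\lambda^k$ this is the first-order recursion $Q(k)a_k=R(k-1)a_{k-1}$ for all $k\in\Z$. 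The crucial input is that irreducibility of $\HH$ — no $\alpha_i-\beta_j$ an integer — forces $R(k-1)\neq0$ for every integer $k$ as soon as $n_\alpha\geq1$, for an integer zero of $R$ would give $\beta_j\equiv\alpha\equiv\alpha_i$. Now $Q$ has only finitely many integer zeros (the integers $\alpha_i-\alpha$), so the recursion is invertible away from them; this forces $a_k=0$ below the smallest such zero, and each time $k$ passes the next zero of $Q$ the relation $R(k-1)a_{k-1}=0$ with $R(k-1)\neq0$ kills the previously free coefficient. Hence the solution space is one-dimensional, there is a unique Jordan block of exponent $\bar\alpha$, and — its size being the total multiplicity $n_\alpha$ found above — it is $\operatorname{Loc}(\bar\alpha,n_\alpha)$, which is i). (The same count holds for general $n,m$ once one notes that a morphism into the regular module $\operatorname{Loc}(\alpha,1)$ annihilates the irregular part, so that iv) may be applied to the slope-zero part itself.)

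The step I expect to be the main obstacle is the bookkeeping in this recursion: one must check that multiplicities among the $\alpha_i$ lying in one congruence class do not create additional free parameters — they do not, because the recursion is first order in $k$, so even a multiple zero of $Q$ releases only one coefficient — and that "passing" a zero of $Q$ genuinely annihilates the earlier free parameter, which is precisely where irreducibility (non-vanishing of $R$ at integers) enters. A milder secondary point is pinning down the indicial polynomial and the dimension of the slope-zero part when $n<m$; one may instead invoke the structural proposition on the singularities of $\HH$ rather than redo the Newton-polygon analysis.
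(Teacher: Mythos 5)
Your proposal is correct, but note that it is necessarily a different route from the paper's: the paper does not prove this proposition at all, it simply imports it from \cite[Corollary 3.2.2]{KaESDE}, so your argument amounts to a self-contained verification rather than a variant of an existing proof. Your scheme --- read off the exponents and their total multiplicities at $\lambda=0$ from the indicial polynomial $\gamma\prod_i(T-\alpha_i)$ (the horizontal part of the Newton polygon of the hypergeometric operator), prove the one-block claim by identifying the number of blocks of class $\bar\alpha$ with the solution space of $P$ in $\operatorname{Loc}(\alpha,1)$, i.e.\ with the Laurent solutions of the first-order recursion $Q(k)a_k=R(k-1)a_{k-1}$, where irreducibility (no $\alpha_i-\beta_j\in\Z$) forces $R$ to have no integer zeros while each integer zero of $Q$ kills the previously free constant, and finally deduce ii) from i) by $\inv^+$ together with $D_{1/\lambda}=-D_\lambda$ --- is sound; I checked that the recursion analysis gives a solution space of dimension exactly one when $n_\alpha\geq1$ and zero otherwise, independently of the multiplicity of the integer zeros of $Q$, so in fact you obtain the single-block conclusion without the hypothesis $n\geq m$, which is slightly stronger than the statement. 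Two points should be made explicit rather than implicit: first, part iv) of the formal Jordan lemma is stated in the paper for a point $p\in X$ and for a module regular at $p$, whereas you invoke it at the puncture $0\notin\gm$ and, when $n<m$, for a module with an irregular part; the clean fix is the one you sketch in your parenthetical, namely to phrase the count intrinsically as $\dim\Hom_{\kk((\lambda))[D]}\bigl(\HH\otimes\kk((\lambda)),\operatorname{Loc}(\alpha,1)\bigr)$ and to note that homomorphisms from positive-slope indecomposables to a regular module vanish, so only the slope-zero part contributes. Second, the step ``total multiplicity of $\bar\alpha$ equals the number of $\alpha_i$ congruent to $\alpha$'' is classical but deserves a line of justification (for $n\geq m$ the $\kk[[\lambda]]$-lattice generated by $1,D,\ldots,D^{\max(n,m)-1}$ is $D$-stable, its residue has characteristic polynomial proportional to $\prod_i(T-\alpha_i)$, and exponent classes with multiplicities do not depend on the chosen lattice; for $n<m$ one needs the corresponding statement for the horizontal part of the Newton polygon, which the paper's structural proposition alone does not give). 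As for what each approach buys: the paper's citation keeps it consistent with its systematic reliance on Katz's local theory of hypergeometrics, while your computation is elementary, works directly from the operator, and makes visible exactly where irreducibility enters.
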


\begin{prop}[Isomorphism class]\label{isohyp}
\emph{(\hspace{-.5pt}\cite[Lem. 3.3]{KaESDE})} Let $\cH=\cH_\gamma(\alpha_i;\beta_j)$ be a hypergeometric $\cD$-module of type $(n,m)$. Its isomorphism class as $\cD_{\gm}$-module determines $n$ and $m$, the set of all of the $\alpha_i$ and $\beta_j$ modulo $\ZZ$, and if either $\cH$ is irreducible or $n=m$, the point $\gamma$.
\end{prop}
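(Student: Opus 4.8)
The plan is to recover each of the listed data from the formal (local) structure of $\HH$ at the two marked points $0,\infty$ of $\PP^1$ and from its finite singular locus; all of these depend only on the isomorphism class of $\HH$ over $\DD_{\gm}$.

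\emph{The integers $n$ and $m$.} The hypergeometric operator has order $\max(n,m)$ in $\dd_\lambda$, so $\HH$ has generic rank $r=\max(n,m)$. By the structure proposition, $\HH$ is regular on all of $\PP^1$ exactly when $n=m$, in which case $n=m=r$; and if $n\neq m$, exactly one of $0,\infty$ is an irregular singularity, all of its slopes equal $1/|n-m|$, and it is $\infty$ when $n>m$ and $0$ when $m>n$. Since the generic rank and the irregularity data at $0$ and at $\infty$ are all isomorphism invariants, one recovers $\max(n,m)=r$ and $|n-m|$, hence $n$ and $m$.

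\emph{The $\alpha_i$ and $\beta_j$ modulo $\Z$.} By the formal Jordan decomposition lemma the multiset of pairs $(\alpha,n_\alpha)\in(\kk/\Z)\times\Z_{>0}$ describing the slope-$0$ part of $\HH$ at $0$ is an isomorphism invariant; indeed part iv) recovers each multiplicity as $\dim\Hom_{\DD_{\gm}}(\HH,\operatorname{Loc}(\alpha,1))$. By the exponents proposition — or, in the possibly reducible case, by computing directly that the indicial polynomial of the hypergeometric operator at $\lambda=0$ is $\gamma\prod_i(D-\alpha_i)$ — the total block size attached to a class $\alpha$ equals the number of $\alpha_i$ congruent to $\alpha$ mod $\Z$. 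Hence the multiset $\{\alpha_i\bmod\Z\}$ is determined. The same argument applied to the slope-$0$ part of $\HH$ at $\infty$ (whose Jordan decomposition is governed by the $\beta_j$ whether or not $\infty$ is a regular singular point; one may also reach this case through the identity $\inv_+\HH_\gamma(\alpha_i;\beta_j)\cong\HH_{(-1)^{n+m}/\gamma}(-\beta_j;-\alpha_i)$) recovers $\{\beta_j\bmod\Z\}$.

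\emph{The point $\gamma$.} If $n=m$, the structure proposition says $\HH$ is regular with singular locus contained in $\{0,\infty,\gamma\}$ and $\dropp_\gamma\HH=1$, except in the degenerate case where $\HH$ has the delta module $\delta_\gamma$ among its composition factors; in either situation $\HH$ genuinely fails to be a connection at $\gamma$, so $\gamma$ is the unique point of $\gm$ at which $\HH$ is singular, and is thereby determined. If instead $\HH$ is irreducible with $n\neq m$, then $\gamma$ is no longer visible as a singular point and must be read off the irregular formal structure at the irregular point, say $\infty$ when $n>m$: after pulling back by a Kummer covering of degree $n-m$ ramified at $\infty$, the irregular part of $\HH$ breaks up into $n-m$ rank-one pieces whose leading exponential factors are $cz+(\text{lower order})$, with $c$ running over a full $\mu_{n-m}$-orbit and $c^{\,n-m}=(n-m)^{n-m}/\gamma$; since the formal decomposition at $\infty$ is an isomorphism invariant, $c^{\,n-m}$, hence $\gamma$, is recovered.

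The delicate point is this last step: making precise how the exponential factors at the irregular singularity depend on $\gamma$ (equivalently, controlling the Turrittin--Levelt formal structure there), together with checking that it is exactly reducibility that can obstruct the recovery of $\gamma$ when $n\neq m$ — which is why that hypothesis appears. Everything else reduces cleanly to the already-established local structure and exponent computations; alternatively one may invoke \cite[Lemma 3.3]{KaESDE} directly.
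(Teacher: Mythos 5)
Your argument is correct in outline, but note that the paper does not actually prove Proposition \ref{isohyp}: it is quoted verbatim from \cite[Lemma 3.3]{KaESDE}, so there is no internal proof to compare with. What you have written is essentially a reconstruction of Katz's own argument: generic rank $\max(n,m)$ plus the irregularity/slope data at $0$ and $\infty$ recover $n$ and $m$; the exponents of the slope-zero parts at $0$ and $\infty$ (via the formal Jordan decomposition, Proposition \ref{Jordan}) recover the $\alpha_i$ and $\beta_j$ modulo $\Z$; and $\gamma$ is recovered either as the unique singular point in $\gm$ when $n=m$, or from the exponential factors $c$ with $c^{\,n-m}=(n-m)^{n-m}/\gamma$ at the irregular point when $\HH$ is irreducible of type $n\neq m$ (your normalization checks out, e.g.\ $\lambda^{\alpha}e^{\lambda/\gamma}$ for type $(1,0)$). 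Two comments on where your shortcuts quietly lean on the cited source. First, the indicial-polynomial remark only identifies the exponents at a \emph{regular} singular point; to read off the $\beta_j$ (and, for $n\neq m$, the constant $\gamma$) at the irregular point of a possibly reducible $\HH$, you need the full Levelt--Turrittin-type formal structure statement of \cite[\S 3.1--3.2]{KaESDE} -- the paper's Exponents proposition is only quoted for irreducible $\HH$ -- so this step is precisely the content of the reference rather than something cheaper; you flag this honestly as the delicate point. Second, in the $n=m$ case you can close the argument more economically with tools already in the paper: $\chi(\HH)=-1$, whereas a regular module with no singularity in $\gm$ has $\chi=0$ by the Euler--Poincar\'e formula used in Proposition \ref{charcero}, so $\HH$ must fail to be a connection at some point of $\gm$, and the leading coefficient $(\gamma-\lambda)D^n$ leaves $\gamma$ as the only candidate; this avoids invoking the pseudoreflection description for reducible $\HH$.
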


As a matter of fact, this proposition shows that in the regular or the irreducible case, all the parameters of a hypergeometric $\cD$-module are intrinsic to it. Namely, assuming $n\geq m$, $n$ is the generic rank of $\cH$, $m$ the rank of its slope zero part at infinity, the $\alpha_i$ and the $\beta_j$ are the exponents at the origin and infinity, respectively, and $\gamma$ is the other regular singularity of $\cH$ in the case it is regular. The argument in the irreducible irregular case is a bit more complicated, but we will not need it in the following.

\begin{prop}\label{hyperLS}
\emph{(cf. \cite[Thm. 3.7.1]{KaESDE}, \cite[Thm. 1]{LS})} Let $\cM$ be a nonpunctual irreducible holonomic $\cD_{\gm}$-module of Euler-Poincaré characteristic $-1$. Then $\cM$ is a hypergeometric $\cD$-module.
\end{prop}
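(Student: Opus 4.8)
The plan is to read off the global and local shape of $\MM$ from the Euler-Poincar\'e formula and then recognise that shape as that of a hypergeometric operator; the argument runs parallel to the proof of Proposition \ref{charcero}.

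First I would reduce to the middle-extension case. By Proposition \ref{irred1} there is an open subvariety $U=\gm\setminus\Sigma$, with $\Sigma$ finite, on which $j^{+}\MM$ is an irreducible integrable connection of some rank $r$ and with $\MM\cong j_{!+}j^{+}\MM$. Applying the Euler-Poincar\'e formula \cite[Theorem 2.9.9]{KaESDE} for middle extensions on $\gm$ (in its de Rham form, as already used above) and using $\chi(\gm)=0$ gives
$$\Irr_0(\MM)+\Irr_\infty(\MM)+\sum_{s\in\gm}\bigl(\dropp_s(\MM)+\Irr_s(\MM)\bigr)=1.$$
All summands are non-negative integers, and at an interior point $s$ one has $\dropp_s(\MM)\ge\Irr_s(\MM)$ as soon as $\Irr_s(\MM)>0$ (the tame part of $\MM$ at $s$ accounts for a rank $r-\dropp_s$, so the irregular part has rank at most $\dropp_s$, and its irregularity is at most its rank). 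Hence exactly one of the following holds: (A) $\MM$ is regular on $\PP^1$, lisse on $\gm\setminus\{\gamma\}$ for a single $\gamma\in\gm$, at which $\dropp_\gamma(\MM)=1$; (B) $\MM$ is lisse on $\gm$, regular at infinity, with $\Irr_0(\MM)=1$; (C) the mirror image of (B), with $0$ and $\infty$ exchanged. By the identity $\inv^{+}\HH_\gamma(\alpha_i;\beta_j)\cong\HH_{(-1)^{n+m}/\gamma}(-\beta_j;-\alpha_i)$ of the Remark above, pullback along the inversion of $\gm$ exchanges (B) and (C) while preserving irreducibility, non-punctuality and $\chi$, so it suffices to treat (A) and (C).

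In case (A) the module $\MM$ is the intermediate extension of an irreducible rank-$r$ connection on $\gm\setminus\{\gamma\}$ whose local monodromy at $\gamma$ is a pseudoreflection --- the situation of Levelt's theorem. One takes for the $\alpha_i$ the exponents of $\MM$ at the origin and for the $\beta_j$ its exponents at infinity, and forms the candidate $\HH:=\HH_{\gamma}(\alpha_i;\beta_j)$ of type $(r,r)$; irreducibility of $\MM$ forces the two multisets $\{\alpha_i\}$ and $\{\beta_j\}$ to be disjoint modulo $\Z$ (a shared exponent produces a proper sub- or quotient module), so by the Irreducibility proposition $\HH$ is itself irreducible. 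One then argues, using the pseudoreflection at $\gamma$ and the resulting rigidity, that the local data at $0,\gamma,\infty$ determine the connection: $\MM$ and $\HH$ have the same rank, the same singularities, and the same local monodromy at each of them, hence $\MM\cong\HH$. In the spirit of Proposition \ref{charcero} one may instead run the cyclic-vector argument: regularity at the origin makes the coefficients of the operator polynomial in $D$, regularity at infinity (after $\mu=\lambda^{-1}$, so that $D_\mu=-D_\lambda$) bounds their degree in $\lambda$ by one, and the single simple pole at $\gamma$ pins the leading coefficient down to a multiple of $\gamma-\lambda$, forcing the operator into the form $\gamma\prod_i(D-\alpha_i)-\lambda\prod_j(D-\beta_j)$; the delicate point there is to rule out apparent singularities in the chosen cyclic presentation.

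In case (C) the slope decomposition of $\MM$ at infinity has irregularity one, hence consists of a tame part of some rank $m<r$ together with a single slope-$\tfrac1{r-m}$ block of rank $r-m$ --- exactly the behaviour at infinity of a hypergeometric of type $(r,m)$ that is regular at the origin. The cleanest route is Fourier transform: extend $\MM$ across the origin (where it is regular) to an irreducible non-punctual holonomic $\DD_{\AA^1}$-module, apply $\FT$, and use local stationary phase, under which the slope-$\tfrac1{r-m}$ part at infinity becomes a slope of strictly smaller numerator, eventually a regular singularity at a finite point; since a direct computation on the hypergeometric operator shows that $\FT$ and Kummer twist carry hypergeometric modules to hypergeometric modules (cf. \cite[\S3]{KaESDE}), running this reduction backwards from the now-regular situation yields that $\MM$ is hypergeometric. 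The main obstacle, in both (A) and (C), is precisely this rigidity step: matching the local formal data of $\MM$ with that of a hypergeometric does not by itself produce an isomorphism, and proving that the data determine $\MM$ (equivalently, that no spurious singularities intervene) is the technical heart of \cite[Theorem 3.7.1]{KaESDE} and \cite[Th\'eor\`eme 1]{LS} --- and the place where irreducibility of $\MM$ is used essentially.
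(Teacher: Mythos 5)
A point of comparison first: the paper does not prove this proposition. It is stated with ``cf.'' references precisely because it is imported as a known result from \cite[Theorem 3.7.1]{KaESDE} and \cite[Th\'eor\`eme 1]{LS}, and nothing in the paper supplies or relies on an internal proof. So the only question is whether your argument would stand on its own.

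As written it does not, and you concede this in your final sentence. The preliminary reduction is correct: by Proposition \ref{irred1} one has $\MM\cong j_{!+}j^+\MM$, and the Euler--Poincar\'e formula \cite[Theorem 2.9.9]{KaESDE} with $\chi(\MM)=-1$ forces exactly one of $\Irr_0(\MM)$, $\Irr_\infty(\MM)$, $\dropp_s(\MM)+\Irr_s(\MM)$ (for $s\in\gm$) to equal $1$; since irregularity at an interior point already forces a positive drop of the middle extension there, your trichotomy (A)/(B)/(C) and the use of $\inv^+$ to merge (B) and (C) are sound, and this bookkeeping parallels the proof of Proposition \ref{charcero}. What is missing is everything after that. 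In case (A) you form the candidate $\HH_{\gamma}(\alpha_i;\beta_j)$ from the exponents of $\MM$ and then assert that the pseudoreflection at $\gamma$ plus ``rigidity'' yield $\MM\cong\HH$; in case (C) you assert that a Fourier-transform/stationary-phase induction reduces to the regular case and can be run backwards. Neither assertion is argued: the claim that the local data at $0,\gamma,\infty$ (resp.\ the slope data at $0,\infty$) determine the irreducible module --- equivalently, that a cyclic presentation can be chosen without apparent singularities --- is precisely the content of \cite[Theorem 3.7.1]{KaESDE} and \cite[Th\'eor\`eme 1]{LS}, to which your sketch defers it. (Likewise, the parenthetical claim that a shared exponent of $\MM$ at $0$ and $\infty$ modulo $\Z$ would contradict irreducibility is itself a nontrivial lemma about pseudoreflection monodromy, not a one-line remark.) So as a citation-level justification your text is consistent with what the paper actually does, but as a self-contained proof it has a genuine gap at its decisive step, the rigidity/reconstruction argument.
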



\section{Setting the problem}\label{problema}

Once we have introduced the objects to study and the tools to work with, let us keep on and start to work on the problem addressed in the paper.

\begin{prop}
$\cXn$ is a smooth quasi-projective variety. Write
$$\gamma_n=(d_n)^{-d_n}\prod_{i=0}^nw_i^{w_i}.$$
Then the morphism $p_n$ is smooth over the open subvariety $U_n=\left\{\lambda\in\AA^1\,|\,\gamma_n\lambda^{d_n}\neq1\right\}$.
\end{prop}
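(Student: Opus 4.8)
The plan is to verify the two assertions in turn by explicit computation in the coordinate charts of $\PP^n$, exploiting the quasi-homogeneity of the defining equation. First I would set $F(x,\lambda)=x_0^{d_n}+\ldots+x_n^{d_n}-\lambda x_0^{w_0}\cdots x_n^{w_n}$ and work in the affine chart $\{x_i\neq0\}$, where after normalizing $x_i=1$ the hypersurface $\XXn$ is cut out by the dehomogenized polynomial $f_i$. Smoothness of $\XXn$ as a variety: it suffices to check that for each fixed point $(x,\lambda)\in\XXn$ the differential of $F$ in the $x$-variables alone, together with $\dd_\lambda F=-x_0^{w_0}\cdots x_n^{w_n}$, does not vanish simultaneously with $F$. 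If some coordinate $x_k=0$, then $\dd_{x_k}F=d_n x_k^{d_n-1}=0$ only if... — actually the cleaner route is: the total space $\XXn\subset\PP^n\times\AA^1$ is smooth because the projection to $\AA^1$ exhibits it locally as a graph. Concretely, $F$ is linear in $\lambda$, and the coefficient of $\lambda$ is the monomial $m(x)=x_0^{w_0}\cdots x_n^{w_n}$; on the locus $m(x)\neq0$ we may solve $\lambda=m(x)^{-1}(x_0^{d_n}+\ldots+x_n^{d_n})$, so $\XXn\cap\{m(x)\neq0\}$ is the graph of a function over an open subset of $\PP^n$, hence smooth; on the locus $m(x)=0$ we have $F(x,\lambda)=x_0^{d_n}+\ldots+x_n^{d_n}$ independent of $\lambda$, and this is the (smooth) Fermat hypersurface times $\AA^1$, away from... — so the only real work is at points where $m(x)=0$ and we must confirm the Fermat part $\{\sum x_i^{d_n}=0\}\cap\{m(x)=0\}$ is smooth inside $\{m(x)=0\}$, or argue directly with the Jacobian of $F$ in $\PP^n\times\AA^1$. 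Since $\gcd(w_i)=1$, the vanishing locus $\{m(x)=0\}=\bigcup_i\{x_i=0\}$ is a normal crossings divisor and on each stratum the Fermat equation restricts to a lower Fermat equation in the surviving variables, which is smooth; combined with $\dd_\lambda F$ handling the complementary stratum, the Jacobian criterion for $F$ holds at every point. I expect this stratified Jacobian check to be the main obstacle, as it requires care with the projective (quasi-homogeneous) setup and the coordinate vanishing.

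Second, I would compute the critical locus of $p_n$ and show it maps into $\{\gamma_n\lambda^{d_n}=1\}$. A point of $\XXn$ is a critical point of $p_n$ precisely when, in a chart, the gradient $\nabla_x f_i$ vanishes. Using the Euler relation for the (weighted-)homogeneous pieces: writing $G=\sum x_i^{d_n}$ and $H=m(x)$, one has $\sum_i x_i\dd_{x_i}G=d_n\,G$ and $\sum_i x_i\dd_{x_i}H=\big(\sum_i w_i\big)H=d_n\,H$, so $\sum_i x_i\dd_{x_i}F=d_n(G-\lambda H)=d_n F$, which vanishes on $\XXn$ and thus gives no new information projectively; the genuine critical equations are $\dd_{x_j}F=0$ for all $j$, i.e. $d_n x_j^{d_n-1}=\lambda w_j x_j^{-1}m(x)$ for every $j$ (after multiplying by $x_j$, valid where all $x_j\neq0$; the coordinate-hyperplane strata are handled separately and, as above, there $F$ is the Fermat equation whose critical locus for the projection is empty). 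Multiplying $x_j\dd_{x_j}F=d_n x_j^{d_n}-\lambda w_j m(x)=0$ over all $j$, or rather summing and using $\sum x_j^{d_n}=\lambda m(x)$ on $\XXn$, yields $d_n x_j^{d_n}=\lambda w_j m(x)$ for each $j$; taking the product over $j=0,\ldots,n$ gives $d_n^{n+1}\prod_j x_j^{d_n}=\lambda^{n+1}\big(\prod_j w_j\big)m(x)^{n+1}$, while $\prod_j x_j^{d_n}=\big(\prod_j x_j\big)^{d_n}$ and $m(x)^{n+1}=\prod_j x_j^{(n+1)w_j}$; to compare these I would instead take the weighted product $\prod_j (x_j^{d_n})^{w_j}=\prod_j(\lambda w_j m(x)/d_n)^{w_j}$, whose left side is $\prod_j x_j^{d_n w_j}=m(x)^{d_n}$ and whose right side is $(\lambda/d_n)^{d_n}\big(\prod_j w_j^{w_j}\big)m(x)^{d_n}$. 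Since $m(x)\neq0$ on this stratum, cancelling $m(x)^{d_n}$ gives $1=(\lambda/d_n)^{d_n}\prod_j w_j^{w_j}=\gamma_n\lambda^{d_n}$, exactly the complement of $U_n$.

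Finally I would assemble these: over $U_n=\{\gamma_n\lambda^{d_n}\neq1\}$ the fibre of $p_n$ contains no critical point by the computation just performed (and the coordinate-hyperplane strata never contribute critical points since there the defining equation is a nonsingular Fermat hypersurface independent of $\lambda$), and since $\XXn$ is smooth and equidimensional with $p_n$ having everywhere the expected fibre dimension, a morphism between smooth varieties with no critical points on the fibres over $U_n$ is smooth over $U_n$ by the Jacobian criterion for smoothness. I would remark that one need not determine whether $p_n$ is actually non-smooth at $\gamma_n\lambda^{d_n}=1$ — only the stated inclusion of the smooth locus is required — which keeps the argument short. The only subtlety to flag is the bookkeeping of roots: the identity $\gamma_n=d_n^{-d_n}\prod w_i^{w_i}$ falls out cleanly precisely from the \emph{$w_i$-weighted} product of the critical equations, so choosing that weighting rather than the naive product is the key trick in the second half.
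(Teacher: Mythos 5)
Your argument is correct and follows essentially the same route as the paper: both verify smoothness of $\XXn$ and of the fibres via the Jacobian criterion, observe that any singular point of a fibre must have all $x_i\neq 0$ because the monomial term drops out of $\dd_{x_j}F$ whenever some other coordinate vanishes, and then eliminate from the relations $d_nx_j^{d_n}=\lambda w_j\xx^w$ to land on $\gamma_n\lambda^{d_n}=1$. Your $w_j$-weighted product is only a cosmetic variant of the paper's substitution of $w_0x_i^{d_n}=w_ix_0^{d_n}$ into the defining equation followed by taking $d_n$-th powers.
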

\begin{proof}
Let us use the multi-index notation, for the sake of simplicity, in such a way that $\xx^a$ will represent the monomial $x_0^{a_0}\cdot\ldots\cdot x_n^{a_n}$ for any $(n+1)$-uple $a$ of positive integers. The partial derivatives of $x_0^{d_n}+\ldots+x_n^{d_n}-\lambda\xx^w$ with respect to the $x_i$ and $\lambda$ are
$$\delta_i:=d_nx_i^{d_n-1}-\lambda w_i\xx^{w-e_i}\,\text{ and }\delta_\lambda:=-\xx^w,$$
respectively. If $\delta_\lambda=0$, then some $x_i$ must vanish, and if $\delta_i=0$ for every $i$, all of the $x_i$ will be zero, which is impossible. Therefore, $\cXn$ is smooth. Since at the singular points of the fibers of $p_n$ we have that $x_i\neq0$ for every $i$, we can multiply by them the partial derivatives $\delta_i$, obtaining that $d_nx_i^{d_n}=-w_i\lambda\xx^w$ for every $i$, so $w_0x_i^{d_n}=w_ix_0^{d_n}$. But then, substituting the $x_i^{d_n}$ in the equation of $\cXn$, $(d_n/w_0)x_0^{d_n}=\lambda\xx^w$. Taking $d_n$-th powers at each side, we get that $d_n^{d_n}=\lambda^{d_n}\prod_iw_i^{w_i}$, so if we do not have that equality, we will find ourselves at a nonsingular fiber.
\end{proof}

Recall that we had an action of the group $G$ on the Dwork family $\cXn$. Since $G$ is a finite group, $\cXn/G$ is another quasi-projective variety, isomorphic to
$$\left\{\begin{array}{rcl}
x_0+\ldots+x_n&=&\lambda x_{n+1}\\
x_0^{w_0}\cdot\ldots\cdot x_n^{w_n}&=&x_{n+1}^{d_n}\end{array}\right.\subset\PP^{n+1}\times\AA^1.$$
Substituting $x_0$ in the second equation for its value given by the first one and taking $x_{n+1}=1$, we find that $\cXn/G$ is the projective closure of
$$\cYn :x_1^{w_1}\cdot\ldots\cdot x_n^{w_n}(\lambda-x_1-\ldots-x_n)^{w_0}=1\subset\gm^n\times\AA^1= \Spec\left(\kk\left[x_1^\pm,\ldots,x_n^\pm,\lambda\right]\right).$$
In this sense we will write $\bar{\cY}_{n,w}$ for $\cXn/G$.

Let $\cZ_{n,w}\subset\AA^n\times\AA^1=\Spec\left(\kk\left[x_1,\ldots,x_n,\lambda\right]\right)$ be the variety defined by the equation
$$x_1^{w_1}\cdot\ldots\cdot x_n^{w_n}\cdot(1-x_1-\ldots-x_n)^{w_0}=\lambda.$$
We will omit the $(n+1)$-tuple $w$ from $\cZ_{n,w}$ when it is clear from the context.

Denote by $'p_n$ and $''p_n$ the projections from $\cYn$ and $\cZ_{n,w}$, respectively, to $\AA^1$. Then we can form the following cartesian diagram:
\begin{equation}\label{cartesiano YZ}
\xymatrix{\ar @{} [dr] |{\Box} \cYn- {}'p_n^{-1}(0) \ar[d]_{'p_n} \ar[r]^{\tilde{\alpha}_n} & \cZ_{n,w}- {}''p_n^{-1}(0) \ar[d]^{''p_n}\\
 \gm \ar[r]^{\iota_n} & \gm},
\end{equation}
where $\tilde{\alpha}_n(\underline{x},\lambda)=\left((x_1/\lambda,\ldots, x_n/\lambda), \lambda^{-d_n}\right)$ (note that it is the restriction of an endomorphism of $\gm^{n+1}$).

\begin{prop}
The families $\cYn$ and $\cZ_{n,w}$ are smooth, but not their projective closures. The projections $'p_n$ and $''p_n$ are smooth, respectively, over the open subvarieties $U_n$ and $\gm-\{\gamma_n\}$.
\end{prop}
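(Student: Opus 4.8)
The plan is to treat the four smoothness assertions separately but by the same mechanism, computing partial derivatives of the defining polynomials and locating the loci where they all vanish simultaneously. For $\ZZ_{n,w}$, write $f(\xx,\lambda)=x_1^{w_1}\cdots x_n^{w_n}(1-x_1-\cdots-x_n)^{w_0}-\lambda$. The derivative $\dd_\lambda f=-1$ never vanishes, so $f$ has no critical points at all; hence $\ZZ_{n,w}$ is smooth. For $\YYn$, the analogous computation with $g(\xx,\lambda)=x_1^{w_1}\cdots x_n^{w_n}(\lambda-x_1-\cdots-x_n)^{w_0}-1$ gives $\dd_\lambda g=w_0\,x_1^{w_1}\cdots x_n^{w_n}(\lambda-x_1-\cdots-x_n)^{w_0-1}$; on $\YYn$ the product $x_1^{w_1}\cdots x_n^{w_n}(\lambda-\sum x_i)^{w_0}$ equals $1$, so none of the factors vanishes, and if $w_0\geq 1$ then $\dd_\lambda g\neq 0$ on $\YYn$ (the case $w_0=0$ does not occur since $w\in\Z_{>0}^{n+1}$). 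Thus $\YYn$ is smooth.

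Next I would handle the smoothness of the two projections $p_n$. Since $p_n$ is a morphism from a smooth variety to the smooth curve $\gm$, it is smooth exactly where its fibers are smooth, i.e.\ away from the points $\lambda_0$ over which the fiber acquires a singular point. A point of the fiber $p_n^{-1}(\lambda_0)\subset\ZZ_{n,w}$ is singular precisely when all the $\dd_{x_i}f$ vanish there while $f=0$. Computing $\dd_{x_i}f = x_1^{w_1}\cdots x_n^{w_n}(1-\sum x_j)^{w_0}\bigl(\tfrac{w_i}{x_i}-\tfrac{w_0}{1-\sum x_j}\bigr)$ and using $x_1^{w_1}\cdots x_n^{w_n}(1-\sum x_j)^{w_0}=\lambda_0\neq 0$ (so every $x_i\neq 0$ and $1-\sum x_j\neq 0$), the vanishing of all $\dd_{x_i}f$ forces $w_i(1-\sum x_j)=w_0 x_i$ for all $i$. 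Summing over $i$ and isolating gives $(1-\sum x_j)\sum_{i\geq1} w_i = w_0\sum x_i$, hence $x_i = w_i/d_n$ for every $i\geq1$ and $1-\sum x_j = w_0/d_n$; substituting back into $f=0$ yields the single value $\lambda_0=\prod_{i=0}^n w_i^{w_i}/d_n^{d_n}=\gamma_n$. So $p_n$ on $\ZZ_{n,w}$ is smooth over $\gm-\{\gamma_n\}$. For $\YYn$, one runs the same argument with $g$: the condition $x_1^{w_1}\cdots x_n^{w_n}(\lambda-\sum x_j)^{w_0}=1$ again forces all factors nonzero, the $\dd_{x_i}g=0$ give $w_i(\lambda-\sum x_j)=w_0 x_i$, and one solves to find the critical fibers sit over the locus cut out by $\gamma_n\lambda^{d_n}=1$, i.e.\ $p_n$ is smooth over $U_n=\{\lambda\mid\gamma_n\lambda^{d_n}\neq 1\}$; alternatively one can deduce this directly from the cartesian diagram \eqref{cartesiano YZ} together with the previous case, since $\tilde\alpha_n$ identifies $\YYn-p_n^{-1}(0)$ with the base change of $\ZZ_{n,w}-p_n^{-1}(0)$ along $\iota_n:z\mapsto z^{-d_n}$, which sends $U_n\cap\gm$ onto $\gm-\{\gamma_n\}$, and base change along the étale (on $\gm$) map $\iota_n$ preserves smoothness of the projection — this also takes care of the point $\lambda=0$, which lies in $U_n$, by using the defining equation of $\YYn$ directly there.

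Finally, for the non-smoothness of the projective closures: $\bar\YY_{n,w}=\XXn/G$ is the quotient of the smooth $\XXn$ by a nontrivial finite group acting with fixed points on the hyperplane-at-infinity strata (the coordinate points where some $x_i=0$), so it is singular along the image of that locus; concretely, the extra equations $x_1^{w_1}\cdots x_n^{w_n}=x_{n+1}^{d_n}$ together with $\sum x_i=\lambda x_{n+1}$ defining $\bar\YY_{n,w}\subset\PP^{n+1}\times\AA^1$ are singular along the locus where two or more of the $x_i$ (including $x_{n+1}$) vanish, which one checks by the Jacobian criterion. Likewise $\ZZbar_{n,w}$ (the projective closure of $\ZZ_{n,w}$) is singular along its boundary. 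The main obstacle — really the only delicate point — is the bookkeeping in the critical-fiber computation: solving the system $w_i(1-\sum x_j)=w_0 x_i$ cleanly and checking that the $d_n$-th power step produces exactly the stated $\gamma_n$ (resp.\ the equation $\gamma_n\lambda^{d_n}=1$) without sign or indexing slips, and making sure the case $n=1$ and small $w_i$ are not exceptional; everything else is a routine application of the Jacobian criterion.
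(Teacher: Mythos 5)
Your proofs of the four main smoothness assertions are correct, but they take a different route from the paper's. You establish smoothness of $\ZZ_{n,w}$ and $\YYn$ and locate the critical values of both projections by direct Jacobian/critical-point computations (correctly recovering $\gamma_n$ for $\ZZ_{n,w}$ and the locus $\gamma_n\lambda^{d_n}=1$ for $\YYn$), whereas the paper argues structurally: $\ZZ_{n,w}$ is a graph and $\YYn$ is an $n$-dimensional torus (the kernel of the character $\xx^w t^{w_0}$ on $\gm^{n+1}$ after setting $t=\lambda-\sum x_i$); the singular fibres of $p_n$ on $\YYn$ are read off from those of $\XXn$ via the $G$-equivariant quotient map, using the critical-value computation already done for $\XXn$ in the preceding proposition; and those of $\ZZ_{n,w}$ follow from the \'etaleness of $\tilde{\alpha}_n$ away from $\lambda=0$ in diagram \ref{cartesiano YZ}. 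Your calculation essentially redoes for $\YYn$ what the paper did for $\XXn$, which is fine and more self-contained, and your base-change remark is the paper's reduction run in the opposite direction. The shortcut ``smooth exactly where the fibres are smooth'' is harmless here because both projections are flat (dominant morphisms from irreducible varieties to a smooth curve), though it deserves a word.

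The genuine gap is in the clause about the projective closures. ``Quotient of a smooth variety by a finite group acting with fixed points, hence singular'' is not a valid principle: quotients by stabilizers generated by pseudo-reflections are smooth (e.g.\ $\AA^1/\{\pm1\}\cong\AA^1$), so that argument proves nothing without analysing the stabilizers. Your concrete fallback is also wrong as stated: at a point of $\bar{\YY}_{n,w}$ where only $x_i=x_{n+1}=0$ and $w_i=1$, the gradient of $x_0^{w_0}\cdots x_n^{w_n}-x_{n+1}^{d_n}$ is nonzero and the Jacobian of the two equations has rank two, so the Jacobian criterion declares such points smooth, not singular. The points it does detect are those where the whole gradient of the monomial equation vanishes (two of $x_0,\ldots,x_n$ vanish, or a single $x_i$ with $w_i\geq2$, together with $x_{n+1}=0$), and one must further check that such points exist on the variety, which requires $n\geq3$ or some $w_i\geq2$ with $n\geq 2$; for $n=1$, and for $n=2$ with all $w_i=1$, there are no such points at all (for $n=1$ the section at infinity is even empty), so any honest proof of this clause has to be qualified. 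The paper itself dispatches this side remark in one line, by observing that the section at infinity is the product of the hyperplane arrangement $\bar{A}$ with $\AA^1$, and nothing later depends on it --- only the smooth loci of $p_n$, which you did establish --- so the flaw is localized, but as written this part of your argument would not stand.
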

\begin{proof}
$\cYn$ and $\cZ_{n,w}$ are smooth for being, respectively, a $n$-dimensional torus and a graph. However, their projective closures will have singularities at their sections at infinity independently of $\lambda$, because both of them are the cartesian product of the same arrangement of hyperplanes with $\AA^1$.

Regarding the fibers of $'p_n$, since the section at infinity of $\bar{\cY}_{n,w}$ is independent of $\lambda$, the singular fibers of $\cYn$ will be over the same points of $\AA^1$ as those of $\bar{\cY}_{n,w}$. Now the quotient map $\pi_G:\cXn\ra\bar{\cY}_{n,w}$ is $G$-equivariant by definition, as well as $p_n$. Then the singular locus of $'p_n:\bar{\cY}_{n,w}\ra\AA^1$ is the same as that of $p_n:\cXn\ra\AA^1$, which is $U_n$.

With respect to the fibers of $\cZ_{n,w}$, note that $\tilde{\alpha}_n$ is an étale morphism out of its section with equation $\lambda=0$, where it is ramified. Therefore, $\cZ_{n,w}$ will have nonsingular fibers on the image by $\iota_n$ of $U_n$ except the origin, that is to say, $\gm-\{\gamma_n\}$.
\end{proof}

Recall that we wanted to compute the invariant part under the action of $G$ of the Gauss-Manin complex of $\cXn$ relative to the parameter $\lambda$, that is, $\left(p_{n,+}\cO_{\cXn}\right)^G$. Remember as well that what is actually more interesting to us is the nonconstant part of its cohomologies. We can actually restrict ourselves to an affine context in order to find it:

\begin{teo}\label{teorema1}
Let $\bar{K}_n={}'p_{n,+}\cO_{\cYn}$. There exists a canonical morphism between the complexes of $\cD_{\AA^1}$-modules $\left(p_{n,+}\cO_{\cXn}\right)^G\lra\bar{K}_n$ such that the cohomologies of its cone are direct sums of copies of the structure sheaf $\cO_{\AA^1}$.
\end{teo}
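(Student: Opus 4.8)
The plan is to relate the $G$-invariant Gauss-Manin cohomology of $\XXn$ to that of the quotient $\bar{\YY}_{n,w}$, and then to pass from the singular projective quotient to the smooth affine variety $\YYn$, losing only copies of $\OO_{\AA^1}$ along the way. First I would recall that for a finite group $G$ acting on a smooth variety, taking $G$-invariants is exact on $G$-equivariant $\DD$-modules (since $\kk[G]$ is semisimple in characteristic zero), and that it commutes with the direct image along the $G$-equivariant map $p_n$. Hence $(p_{n,+}\OO_{\XXn})^G \cong p_{n,+}(\OO_{\XXn})^G$, where on the right the invariants are taken fiberwise. Denoting by $q:\XXn\ra\bar{\YY}_{n,w}$ the quotient map and $\bar{p}_n$ the projection from $\bar{\YY}_{n,w}$, I would argue that $(q_+\OO_{\XXn})^G \cong \OO_{\bar{\YY}_{n,w}}$ on the smooth locus of $q$, or more precisely that there is a canonical morphism $\OO_{\bar{\YY}_{n,w}} \ra (q_+\OO_{\XXn})^G$ whose cone is supported on the (codimension $\geq 1$) branch locus; composing with $\bar{p}_{n,+}$ and using that $\bar{p}_n$ restricted to that locus still has the structure sheaf as its only ``new'' cohomology (the branch locus maps finitely, or with simple enough fibers, to $\AA^1$) gives a morphism $(p_{n,+}\OO_{\XXn})^G \ra \bar{p}_{n,+}\OO_{\bar{\YY}_{n,w}}$ with cone built from $\OO_{\AA^1}$'s. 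Actually, cleaner is to invoke that $q$ is a finite quotient map of normal varieties, so $(q_+\OO_{\XXn})^G = \OO_{\bar{\YY}_{n,w}}$ exactly as $\OO$-modules on the normal variety $\bar{\YY}_{n,w}$, with the $\DD$-module structure matching on the smooth locus; the subtlety is only at the singularities of $\bar{\YY}_{n,w}$.

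Next I would compare the singular projective quotient $\bar{\YY}_{n,w}$ with the smooth affine $\YYn$. By the discussion preceding the theorem, $\bar{\YY}_{n,w}$ is the projective closure of $\YYn$, and its boundary $\bar{\YY}_{n,w}\setminus\YYn$ is, fiber by fiber, a fixed hyperplane arrangement at infinity, \emph{independent of $\lambda$}; write $i:\partial\ra\bar{\YY}_{n,w}$ for this boundary and $\ell:\YYn\hra\bar{\YY}_{n,w}$ for the open inclusion. The plan is to use the excision triangle
$$\RR\Gamma_{[\partial]}\OO_{\bar{\YY}_{n,w}}\lra \OO_{\bar{\YY}_{n,w}}\lra \ell_+\ell^+\OO_{\bar{\YY}_{n,w}} = \ell_+\OO_{\YYn},$$
apply $\bar{p}_{n,+}$, and observe that $\bar{p}_n$ restricted to $\partial$ is (up to the singularities, which lie in $\partial$) a \emph{trivial} family: $\partial \cong \partial_0\times\AA^1$ with $\bar p_n$ the second projection. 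Therefore $\bar{p}_{n,+}\RR\Gamma_{[\partial]}\OO_{\bar{\YY}_{n,w}}$ is computed by K\"unneth from the cohomology of the fixed space $\partial_0$ together with $\OO_{\AA^1}$, so all of its cohomology $\DD_{\AA^1}$-modules are direct sums of copies of $\OO_{\AA^1}$. Combining this with the previous paragraph — i.e. with the identification $(p_{n,+}\OO_{\XXn})^G \cong \bar{p}_{n,+}\OO_{\bar{\YY}_{n,w}}$ up to cone built from $\OO_{\AA^1}$ — and noting that $\bar{p}_{n,+}\ell_+\OO_{\YYn} = p_{n,+}\OO_{\YYn} = \bar K_n$, the octahedron gives the desired canonical morphism $(p_{n,+}\OO_{\XXn})^G \ra \bar K_n$ with cone having only $\OO_{\AA^1}$-cohomologies.

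The main obstacle I expect is the behaviour at the \emph{singular locus}, which is precisely where both comparisons above are delicate at once: $\bar{\YY}_{n,w}$ is singular exactly along (part of) $\partial$, so neither ``$(q_+\OO_{\XXn})^G = \OO_{\bar{\YY}_{n,w}}$ as $\DD$-modules'' nor ``$\RR\Gamma_{[\partial]}\OO_{\bar{\YY}_{n,w}}$ has only $\OO$-type cohomology'' is literally true on the nose, and one must check that the discrepancies are still successive extensions of $\OO_{\AA^1}$ after pushing to $\AA^1$. Two ways around this: either resolve or work on the smooth projective locus $\bar{\YY}_{n,w}^{\mathrm{sm}}$ first and handle the remaining singular strata separately (each being again a product with $\AA^1$, since the whole boundary is $\lambda$-independent, so contributing only $\OO_{\AA^1}$'s), or — more robustly — run the entire argument on $\XXn$ itself: let $\bar{X}$ be the fixed hyperplane-type boundary of a suitable smooth compactification of $\XXn$ over $\AA^1$, take the $G$-invariant excision triangle there, and note the boundary part is again $\lambda$-independent hence contributes only $\OO_{\AA^1}$'s, while the interior part is $(q^{-1}(\YYn))$ whose $G$-invariant pushforward is $\bar K_n$. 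In either implementation the key recurring mechanism is the same: \emph{everything supported on the $\lambda$-independent boundary contributes, after $\bar p_{n,+}$, only direct sums of $\OO_{\AA^1}$}, by K\"unneth and the computation $\chi$-wise that constant families have constant cohomology; making that precise (including the $\DD$-module, not merely $\OO$-module, structure, via smooth base change along $\AA^1\ra\mathrm{pt}$ and the fact that $\OO$ is the only indecomposable with integrable connection and no singularities on $\AA^1$) is the technical heart of the proof.
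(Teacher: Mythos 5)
Your outline matches the paper's in spirit (identify the invariant pushforward with a pushforward coming from the quotient, then excise the $\lambda$-independent boundary and get constancy by K\"unneth), but the step you yourself single out as ``the technical heart'' is exactly the one the paper has to prove and your proposal leaves unresolved: making sense of, and establishing, a canonical identification of $\left(p_{n,+}\OO_{\XXn}\right)^G$ with a pushforward of ``$\OO_{\bar{\YY}_{n,w}}$'' as complexes of $\DD_{\AA^1}$-modules, when $\bar{\YY}_{n,w}$ is singular precisely along the boundary you then want to excise. The free-action/\'etale observation only helps over the open locus $\YYn$, and neither of your two sketched workarounds (stratifying the boundary, or compactifying $\XXn$ -- note $\XXn$ is already proper over $\AA^1$, so it is the invariance step, not a compactification, that is missing) is actually carried out; so as written there is no construction of the morphism $\left(p_{n,+}\OO_{\XXn}\right)^G\lra\bar{K}_n$ nor control of its cone. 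You also assert without proof that $\bullet^G$ commutes with $p_{n,+}$, and the intermediate object ``$p_{n,+}(\OO_{\XXn})^G$ with invariants taken fiberwise'' is not well formed, since $G$ moves the points of $\XXn$.

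The paper fills this gap by never working on the singular quotient at all: it replaces $\OO_{\bar{\YY}_{n,w}}$ by $\MM=\RR\Gamma_{\left[\bar{\YY}_{n,w}\right]}\OO_{\PP^n\times\AA^1}$ in the smooth ambient variety, and proves $\left(\RR\Gamma_{\left[\XXn\right]}\OO_{\PP^n\times\AA^1}\right)^G\cong\MM$ using that $(\OO_{\PP^n\times\AA^1}/\mathcal{J}_{\XX}^k)^G=\OO_{\PP^n\times\AA^1}/\mathcal{J}_{\YY}^k$ together with Maschke exactness of $\bullet^G$, which lets invariants pass through $\RR\HH om_{\OO_{\PP^n\times\AA^1}}\left(\bullet,\OO_{\PP^n\times\AA^1}\right)$; it then checks the commutation of $\bullet^G$ with $p_{n,+}$ by realizing $p_{n,+}$ as $\RR p_{n,*}$ of the relative de Rham complex, whose terms are locally sums of copies of the module and whose differentials are $\kk$-linear, hence $G$-equivariant. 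With $p_{n,+}\OO_{\XXn}\cong p_{n,+}\RR\Gamma_{\left[\XXn\right]}\OO_{\PP^n\times\AA^1}$ (Kashiwara, $\XXn$ smooth), the excision triangle for $\AA^n\times\AA^1\hra\PP^n\times\AA^1\hookleftarrow\PP^{n-1}\times\AA^1$ applied to $\MM$ then gives exactly your picture: the boundary term is $\pi_1^+$ of the local cohomology of a fixed arrangement, hence pushes forward to constant cohomologies, and the open term is $\RR\Gamma_{[\YYn]}\OO_{\AA^n\times\AA^1}$, which pushes to $\bar{K}_n$ because $\YYn$ is smooth. So your K\"unneth/constancy mechanism is sound and is the one used, but to turn your sketch into a proof you need the ambient-space local-cohomology reformulation (or an equivalent device) to handle the invariants and the singularities simultaneously.
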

\begin{proof}
Since $\cXn$ is smooth, $\bR\Gamma_{\left[\cXn\right]}\cO_{\PP^n\times\AA^1}\cong\iota_+\cO_{\cXn}[-1]$, where $\iota$ denotes the inclusion $\cXn\hra\PP^n\times\AA^1$ (cf. \cite[Prop. 1.7.1]{HTT}). The action of $G$ on $\cXn$ can be easily extended to $\PP^n\times\AA^1$, in such a way that $\iota$ is a $G$-equivariant morphism. Then, as discussed in subsection \ref{contents}, we can claim by \cite[p. 169]{KashEquiv} that $\bR\Gamma_{\left[\cXn\right]}\cO_{\PP^n\times\AA^1}$ is a $G$-equivariant $\cD_{\PP^n\times\AA^1}$-module.

Let us see now $\bar{\cY}_{n,w}$ as a quasi-projective variety in $\PP^n\times\AA^1$ and call $\cM:=\bR\Gamma_{\left[\bar{\cY}_{n,w}\right]}\cO_{\PP^n\times\AA^1}$. Let $\mathcal{J}_{\cX}$ and $\mathcal{J}_{\cY}$ be the ideals of definition of $\cXn$ and $\cXn/G$, respectively. The invariant part under the action of $G$ of the rings $\cO_{\PP^n\times\AA^1}/\mathcal{J}_\cX^k$ are, by construction, $\cO_{\PP^n\times\AA^1}/\mathcal{J}_\cY^k$, for every $k\geq0$ (cf. \cite[p. 127]{Harris}). Since we are working with a finite abelian group and sheaves of $\kk$-vector spaces, we can claim thanks to Maschke's theorem that the functor $\bullet^G=\Hom_{\kk[G]}(\kk,\bullet)$ is exact.

Furthermore, $G$ is finite, and thus isomorphic to the product of some cyclic groups. Then the invariant part of a sheaf of $\kk$-vector spaces is the kernel of the product of the linear maps $\varphi_{a_i}-\id$, the $a_i$ and $\varphi_{a_i}$ being the generators of $G$ and their associated actions on the sheaf. Since $\bullet^G$ is a kernel and an exact functor, it commutes with derived functors of left exact ones. In particular, so it does with $\bR\cH om_{\cO_{\PP^n\times\AA^1}}\left(\bullet,\cO_{\PP^n\times\AA^1}\right)$, and then the invariant part of $\bR\Gamma_{\left[\cXn\right]}\cO_{\PP^n\times\AA^1}$ under the action of $G$ must be $\cM$.

By the same arguments as in the beginning of the proof, $\pi_{2,+}\bR\Gamma_{\left[\cXn\right]}\cO_{\PP^n\times\AA^1}[1]\cong p_{n,+}\cO_{\cXn}$. Let us prove now that taking invariants and direct image by $\pi_{2}$ commute, so that $\left(p_{n,+}\cO_{\cXn}\right)^G=\pi_{2,+}\cM[1]$.

The morphism $\pi_2$ is a projection, so the functor $\pi_{2,+}$ is the image by $\bR\pi_{2,*}$ of the relative de Rham complex $\DR_{\pi_{2}}$ shifted $n$ degrees to the left. By the same reasons as in the previous paragraph, $\bR\pi_{2,*}$ commutes with $\bullet^G$. The relative de Rham complex is a complex of sheaves of $\kk$-vector spaces whose objects are $\cN\otimes_{\cO_{\PP^n\times\AA^1}}\Omega_{\PP^n\times\AA^1/\AA^1}^i$ for some $\cD_{\PP^n\times\AA^1}$-module $\cN$. The connecting morphisms are $\kk$-linear, and then $G$-equivariant. Since locally the differential modules are isomorphic to a direct sum of copies of $\cO_{\PP^n\times\AA^1}$, the objects of the relative de Rham complex will be locally isomorphic to a direct sum of copies of $\cN$. Consequently, $\bullet^G$ and $DR_{\pi_{2}}$ will commute as well and then, $\left(p_{n,+}\cO_{\cXn}\right)^G=\pi_{2,+}\cM[1]$.

Let now $\cYn^{\infty}$ be the intersection of the hyperplane at infinity with $\bar{\cY}_{n,w}$ and denote by $i:\PP^{n-1}\times\AA^1\ra\PP^n\times\AA^1$ and $j:\AA^n\times\AA^1\ra\PP^n\times\AA^1$ the canonical immersions. We have the associated excision distinguished triangle
$$\pi_{2,+}\cM\lra \pi_{2,+}j^+\cM\lra \pi_{2,+}i^+\cM\lra.$$
Now, by \cite[Prop. I.6.3.1]{Meb6Oper},
$$i^+\cM\cong \bR\Gamma_{[\cYn^{\infty}]}\cO_{\PP^{n-1}\times\AA^1}\cong \pi_1^+\bR\Gamma_{\left[\bar{A}\right]}\cO_{\PP^{n-1}},$$
$\bar{A}$ being the projective arrangement of hyperplanes such that $\cYn^{\infty}$ is the product $\bar{A}\times\AA^1$. Then $\pi_{2,+}i^+\cM$ is the tensor product $\pi_{\PP^{n-1},+}\bR\Gamma_{\left[\bar{A}\right]}\cO_{\PP^{n-1}} \otimes_\kk\cO_{\AA^1}$ and it has constant cohomologies.

Finally, $j^+\cM\cong\bR\Gamma_{[\cYn]}\cO_{\AA^n\times\AA^1}$. Since $\cYn$ is smooth, we have $\pi_{2,+}j^+\cM\cong\bar{K}_n[-1]$, so in the end we have a triangle
$$\left(p_{n,+}\cO_{\cXn}\right)^G\lra\bar{K}_n\lra \pi_{2,+}i_+i^+\cM[1]\lra,$$
the last complex having direct sums of copies of the structure sheaf as cohomologies, and we are done.
\end{proof}

\begin{nota}\label{semisimple}
The morphism $p_n$ is proper and smooth and $\cO_{\cXn}$ can be endowed with the structure of a pure Hodge module. Then, by virtue of \cite[4.5.2-4.5.4]{SaiMHM}, we can affirm that $p_{n,+}\cO_{\cXn}$ can be upgraded to a semisimple complex of Hodge modules, being in such a case the direct sum of its cohomologies, which are in turn semisimple Hodge modules. The same applies to their underlying $\cD_{\AA^1}$-modules. Now note that $\left(p_{n,+}\cO_{\cXn}\right)^G$ is a direct summand of $p_{n,+}\cO_{\cXn}$, because the latter can be decomposed as the direct sum of its eigenspaces associated with the action of $G$, so in particular, the former is a semisimple complex of $\cD_{\AA^1}$-modules too.

In particular, the theorem tells us that the nonconstant part of the cohomologies of $\left(p_{n,+}\cO_{\cXn}\right)^G$ are those of $\bar{K}_n$, which coincide with the middle extension of their restrictions to $\gm$. Note indeed that due to the theorem, $\bar{K}_n$ has the same singularities at the origin as $\left(p_{n,+}\cO_{\cXn}\right)^G$, which as said in the paragraph above, is a direct summand of $p_{n,+}\cO_{\cXn}$ which has in turn no singularity at the origin, for $p_n$ is smooth there. Summing up, the nonconstant part of the cohomologies of $\bar{K}_n$ cannot have any punctual composition factor at the origin.
\end{nota}

Let us continue now with the proof of Theorem \ref{interesante}, by using the theorem above. We need some more information about the Fourier transform of $\bar{K}_n$ and we will gather it from the work \cite{DouSab} by Douai and Sabbah.

\begin{prop}\label{FTKn}
\emph{(cf. \cite[Prop. 3.2]{DouSab})} Let $n\geq1$. Then we have an isomorphism of complexes of $\cD_{\gm}$-modules
$$j^+\FT\bar{K}_n\cong[d_n]^+\cH_{d_n^{d_n}\gamma_n} \left(\frac{1}{w_0},\ldots,\frac{w_0}{w_0},\ldots,\frac{w_n}{w_n}; \emptyset\right).$$
\end{prop}

\vspace{-.5cm}

\begin{defi}
Let $\cM\in\Dc(\cD_{\AA^1})$ be a complex of coherent $\cD_{\AA^1}$-modules, and let $j$ be the canonical inclusion of $\gm$ into $\AA^1$. We define its Fourier localization as
$$\FLoc\cM=\FT^{-1}j_+j^+\FT\cM.$$
\end{defi}

\begin{nota}
The functor $\FLoc$ is exact in the category of coherent $\cD_{\AA^1}$-modules. In addition, for any holonomic $\cD_{\AA^1}$-module $\cM$, there exist two canonical morphisms $\cM\lra\FLoc\cM$ and $\NC\cM\lra\FLoc\cM$ whose kernels and cokernels are constant (we have the first one as well if $\cM$ is only coherent).
\end{nota}

\noindent\textit{Proof of Theorem \ref{interesante}.}
Since $j^+\FT\bar{K}_n$ is a single $\cD_{\gm}$-module at degree zero, by the previous remark all the other cohomologies of $\bar{K}_n$, and then of $\left(p_{n,+}\cO_{\cXn}\right)^G$, must be constant, so we can focus on the zeroth one.

After Remark \ref{semisimple} and Proposition \ref{FTKn}, we just need to show that $\iota_n^+\cH_n$ is isomorphic to $\NC_0j^+\cH^0\bar{K}_n$, which is isomorphic to $j^+\NC\cH^0\bar{K}_n$ because of $\cH^0\bar{K}_n$ not having singularities at the origin. We know that $j^+\FT\bar{K}_n\cong\cD_{\gm}/(P)$, where $P$ is
$$\gamma_n\prod_{i=0}^n\prod_{j=1}^{w_i}\left(D-\frac{d_n j}{w_i}\right)-\lambda^{d_n}.$$
Therefore, applying $\FT^{-1}$ to the canonical morphism $\cD_{\AA^1}/(P)\ra j_+j^+\FT\bar{K}_n$, we can deduce that there exists another canonical morphism from $\cM:=\FT^{-1}\cD_{\AA^1}/(P)=\cD_{\AA^1}/(Q)$ to $\FLoc\bar{K}_n=\FLoc\cH^0\bar{K}_n$. We can choose $Q$ as minus the image by $\FT^{-1}$ of $P$, that is,
$$Q=\dd^{d_n}-\gamma_n\prod_{i=0}^n\prod_{j=1}^{w_i} \left(\dd\lambda+\frac{d_n j}{w_i}\right).$$
Since the kernel and the cokernel of the morphism $\cD_{\AA^1}/(P)\ra j_+j^+\FT\bar{K}_n$ are necessarily supported at the origin, then both those of $\cM\ra\FLoc\bar{K}_n$ must be constant. In other words, we can form an exact sequence of the form
$$0\lra\cO_{\AA^1}^r\lra\cM\lra\FLoc\cH^0\bar{K}_n\lra\cO_{\AA^1}^s\lra0,$$
for some $r,s\geq0$. Since $\NC\FLoc=\NC$, applying now $j^+\NC$ to that sequence we obtain the isomorphism $j^+\NC\cM\cong j^+\NC\cH^0\bar{K}_n$.

Now note that $\cM$ has no singularities at the origin, so $j^+\NC\cM\cong\NC_0 j^+\cM$ as well. Then we just need to compute $j^+\cM$, but then we can multiply $Q$ on the left by $\lambda^{d_n}$, and since $\lambda^{d_n}\dd^{d_n}=\prod_{j=0}^{d_n-1}(D-j)$, we have in conclusion that $j^+\NC\cH^0\bar{K}_n$ is isomorphic to the nonconstant part of the quotient of $\cD_{\gm}$ by the left ideal generated by
$$\prod_{j=0}^{d_n-1}(D-j)-\gamma_n\lambda^{d_n}\prod_{i=0}^n\prod_{j=1}^{w_i} \left(\dd\lambda+\frac{d_n j}{w_i}\right),$$
which is isomorphic to the inverse image by $\iota_n$ of the hypergeometric $\cD_{\gm}$-module
$$\cH_{\gamma_n} \left(\frac{1}{w_0},\ldots,\frac{w_0}{w_0},\ldots, \frac{1}{w_n},\ldots,\frac{w_n}{w_n}; \frac{1}{d_n},\ldots,\frac{d_n}{d_n}\right).$$
This hypergeometric $\cD$-module has $\cH_n$ among its composition factors, together with every Kummer $\cD$-module of the form $\cK_\alpha$ with $\alpha\in C_n$ (cf. \cite[Cor. 3.7.5.2]{KaESDE}). However, all those Kummer factors are sent to $\cO_{\gm}$ by $\iota_n^+$, so the proof is finished.\hfill$\Box$

\vspace{.3cm}

Define the complex $K_n$ to be $p_{n,+}\cO_{\cZ_n-p_n^{-1}(0)}\in\Drh(\cD_{\gm})$. It fits with the statement of Theorem \ref{teorema2} by applying base change using diagram \ref{cartesiano YZ} (cf. \cite[Thm. 1.7.3]{HTT}). To prove that theorem we should compute then the Gauss-Manin complex of the family $\cZ_n$ outside the origin. Thanks to a combination mainly of Proposition \ref{isohyp} and Proposition \ref{hyperLS} we can characterize $\cG_n$ up to semisimplification if we prove that its Euler-Poincaré characteristic is -1, find its generic rank as $\cO_{\gm}$-module, calculate the exponents at the origin and infinity and know where in $\gm$ it has a singularity. The expression for the extension of the hypergeometric $\cD$-module with the Kummer ones will appear as an interesting consequence of one of the proofs below. We will summarize the strategy to determine $\cG_n$ in a more detailed way, but before, let us prove a small part of the main theorem. Its statement may seem vague and easy to prove, but it tells us just the information we will need at some moment in the future.

\begin{lema}\label{arreglo}
For any $n\geq2$, let $\bar{A}$ be the projective arrangement of hyperplanes in $\PP^{n-1}$ defined by
$$\bar{A}:x_1\cdot\ldots\cdot x_n(x_1+\ldots+x_n)=0\subset\PP^{n-1}.$$
Then for $-(n-2)\leq i\leq1$,
$$\cH^i\pi_{\PP^{n-1},+}\bR\Gamma_{\left[\bar{A}\right]}\cO_{\PP^{n-1}}\cong\left\{\begin{array}{ll}
\kk^{\binom{n}{i+n-2}} & \text{if }2\mid i+n,\\
\kk^{\binom{n}{i+n-2}+1} & \text{if }2\nmid i+n.\end{array}\right.$$
\end{lema}
\begin{proof}
From considering the diagram $\PP^{n-1}-\bar{A}\ra\PP^{n-1}\leftarrow\bar{A}$ and applying $\pi_{\PP^{n-1},+}$ we can obtain the distinguished triangle
$$\pi_{\PP^{n-1},+}\bR\Gamma_{\left[\bar{A}\right]}\cO_{\PP^{n-1}}\lra \pi_{\PP^{n-1},+}\cO_{\PP^{n-1}}\lra \pi_{\PP^{n-1},+}\cO_{\PP^{n-1}}(*\bar{A})\lra.$$
Note that $\PP^{n-1}-\bar{A}$ is also the complement of an affine arrangement $A$ of $n$ hyperplanes in general position in $\AA^{n-1}$. Therefore $\pi_{\PP^{n-1},+}\cO_{\PP^{n-1}}(*\bar{A})\cong\pi_{\AA^{n-1},+}\cO_{\AA^{n-1}}(*A)$, and by virtue of \cite[Prop. 5.2]{CaMV} and knowing the global de Rham cohomology of the projective space, the following fragments occur in the long exact sequence of cohomology of the triangle for $i+n$ odd and $-(n-2)\leq i\leq0$:
$$0\ra\kk^{\binom{n}{i+n-2}}\ra\cH^i\pi_{\PP^{n-1},+} \bR\Gamma_{\left[\bar{A}\right]}\cO_{\PP^{n-1}}\ra \kk\ra\kk^{\binom{n}{i+n-1}}\ra\cH^{i+1}\pi_{\PP^{n-1},+} \bR\Gamma_{\left[\bar{A}\right]}\cO_{\PP^{n-1}}\ra0.$$
If $i=1$ and $n+1$ is odd we can also extract the exact sequence
$$0\ra\kk^n\ra \cH^1\pi_{\PP^{n-1},+}\bR\Gamma_{\left[\bar{A}\right]}\cO_{\PP^{n-1}}\ra \kk\ra0.$$
The complex $\pi_{\PP^{n-1},+}\cO_{\PP^{n-1}}(*\bar{A})$ is isomorphic to the Orlik-Solomon algebra of the arrangement $\bar{A}$ (cf. \cite[Lem. 5]{Brieskorn}), which is generated by the inverse of the equations of each hyperplane in it. Therefore the morphism $\cH^i\pi_{\PP^{n-1},+}\cO_{\PP^{n-1}}\lra \cH^i\pi_{\PP^{n-1},+}\cO_{\PP^{n-1}}(*\bar{A})$ is zero and then the statement holds.
\end{proof}

\begin{prop}\label{cociente}
For every $n\geq1$, the following hold:
\begin{enumerate}[i)]
\item $\cH^0K_n$ has a quotient isomorphic to $\cO_{\gm}^n$.
\item For $-(n-2)\leq i\leq-1$ and $i+n$ odd, the generic ranks of $\cH^{i-1}K_n$ and $\cH^{i}K_n$ add up to $\binom{n+1}{i+n-1}$. Moreover, for any $i=-(n-1),\ldots,-1$,
    $$\binom{n}{i+n-1}-1\leq\operatorname{rk}\cH^iK_n\leq\binom{n}{i+n-1}+1$$
    and in any case $\operatorname{rk}\cH^{-1}K_n\leq\binom{n}{n-2}+1$.
\item The cohomologies $\cH^i\bar{K}_n$ are constant $\cD_{\AA^1}$-modules for $i\leq-1$.
\end{enumerate}
\end{prop}
\begin{proof}
Let us prove first point i. Let us consider, for the sake of simplicity, $\cZ_n\subset\AA^n\times\gm$, and let $\cZbar_n\subset\PP^n\times\gm$ and $\cZinf_n\subset\PP^{n-1}\times\gm$ be its projective closure in the first factor and its intersection with the hyperplane at infinity, respectively. Let us call now $\cM=\bR\Gamma_{\left[\cZbar_n\right]}\cO_{\PP^n\times\gm}$. Then we can form the distinguished triangle
$$\cM\lra j_+j^+\cM\lra i_+i^+\cM\lra$$
associated with the diagram $\AA^n\times\gm\stackrel{j}{\ra} \PP^n\times\gm\stackrel{i}{\leftarrow}\PP^{n-1}\times\gm$.

Let us apply $\pi_{2,+}$ to the above triangle to obtain a new one. By \cite[Prop. I.6.3.1]{Meb6Oper}, $j^+\cM\cong\bR\Gamma_{[\cZ_n]}\cO_{\AA^n\times\gm}$. Then, thanks to $\cZ_n$ being smooth, $\pi_{2,+}j^+\cM[1]$ is actually our $K_n$. The long exact sequence of cohomology of the new triangle contains the following piece:
$$\ldots\lra\cH^1\pi_{2,+}j^+\cM \lra\cH^1\pi_{2,+}i^+\cM\lra\cH^2\pi_{2,+}\cM\lra0.$$
We will have proved the statement of point i if we show the following:
$$\cH^1\pi_{2,+}i^+\cM\cong\left\{\begin{array}{lc}
\cO_{\gm}^{n+1} & \text{if }2\mid n\\
\cO_{\gm}^n & \text{if }2\nmid n\end{array}\right. \text{ and } \cH^2\pi_{2,+}\cM\cong\left\{\begin{array}{lc}
\cO_{\gm} & \text{if }2\mid n\\
0 & \text{if }2\nmid n\end{array}.\right.$$
By means of \textit{loc. cit.} again,
$$i^+\cM\cong \bR\Gamma_{[\cZinf_n]}\cO_{\PP^{n-1}\times\gm}\cong \pi_1^+\bR\Gamma_{\left[\bar{A}\right]}\cO_{\PP^{n-1}},$$
where $\bar{A}$ is the projective arrangement of hyperplanes of the previous lemma, in such a way that $\cZinf_n\cong\bar{A}\times\gm$. Then
$$\pi_{2,+}i^+\cM\cong\pi_{\PP^{n-1},+} \bR\Gamma_{\left[\bar{A}\right]}\cO_{\PP^{n-1}} \otimes_{\kk}\cO_{\gm},$$
so by Lemma \ref{arreglo} the first couple of isomorphisms holds.

Let us go for the second one, and let us go back to $\cXn$ to obtain them. We already know that each cohomology of $p_{n,+}\cO_{\cXn}$ is a semisimple $\cD_{\AA^1}$-module, and then the middle extension of its restriction to $U_n$, where $p_n$ is smooth. Then every fiber over a point of $U_n$ will have the same global de Rham cohomology as the fiber over the origin, which is a Fermat hypersurface. In particular, $\cH^1p_{n,+}\cO_{p_n^{-1}(\lambda_0)}\cong\kk$ if $n$ is even, vanishing if $n$ is odd. In that case, it is obvious that $\cH^1p_{n,+}\cO_{\cXn}$ will vanish too, but we cannot say anything for the moment about what happens when $n$ is even.

Restrict now our variety of parameters to $\gm$ and consider $\cXaf_{n,w}$ and $\cXinf_{n,w}$ to be the affine part of $\cXn$ and its intersection with the hyperplane at infinity within the first factor of $\PP^n\times\gm$. Consider the excision triangle associated with the diagram $\cXaf_{n,w}\stackrel{j}{\ra} \cXn\stackrel{i}{\leftarrow}\cXinf_{n,w}$ and the $\cD$-module $\cO_{\cXn}$ and apply $p_{n,+}$ to it, from which we can get the exact sequence (calling $p_n$ to the restrictions of such projection to $\cXaf_{n,w}$ and $\cXinf_{n,w}$ too)
$$\ldots\lra\cH^0p_{n,+}\cO_{\cXaf_{n,w}} \lra\cH^0p_{n,+}\cO_{\cXinf_{n,w}} \lra\cH^1p_{n,+}\cO_{\cXn}\lra0.$$
Note that $\cXinf_{n,w}$ is the cartesian product of another Fermat hypersurface with $\gm$, so $\cH^0p_{n,+}\cO_{\cXinf_{n,w}}$ is $\cO_{\gm}^r$ for some $r>0$. We have then that $\cH^1p_{n,+}\cO_{p_{n}^{-1}(U_n^*)}$ is a $\cD_{U_n^*}$-module of generic rank 1 and at the same time a quotient of $\cO_{U_n^*}^r$, so it is nothing but $\cO_{U_n^*}$. In conclusion, $\cH^1p_{n,+}\cO_{\cXn}\cong\cO_{\gm}$ if $n$ is even.

Now let us continue our journey passing from $\cXn$ to $\bar{\cY}_{n,w}$ and from there to $\cZbar_n$. Remember that there was an étale morphism between the last two defined by $\tilde{\alpha}_n((x_0:\ldots:x_n),\lambda)=((\lambda x_0:x_1:\ldots:x_n),\lambda^{-d_n})$. Since $\tilde{\alpha}_n$ can be extended to $\PP^n\times\gm$, we have that $\bR\Gamma_{\left[\bar{\cY}_{n,w}\right]} \cO_{\PP^n\times\gm}\cong \tilde{\alpha}_n^+\cM$ and by base change, $\pi_{2,+}\bR\Gamma_{\left[\bar{\cY}_{n,w}\right]} \cO_{\PP^n\times\gm}\cong\iota_n^+\pi_{n+1,+}\cM$.

We know that $\cH^2\pi_{2,+}\bR\Gamma_{\left[\bar{\cY}_{n,w}\right]} \cO_{\PP^n\times\gm}$ is the invariant part of $\cH^1p_{n,+}\cO_{\cXn}$ under the action of $G$, which is irreducible of rank one whenever it is nonzero. As a consequence, $\cH^2\iota_n^+\pi_{2,+}\cM$ must be zero if $n$ is odd, and $\cO_{\gm}$ if $n$ is even. If $n$ is odd we have proved what we wanted to, so let us take $n$ even. Since $\iota_n^+$ is an exact functor in the category of $\cD_{\gm}$-modules, $\cH^2\pi_{2,+}\cM$ must be a Kummer $\cD$-module, eventually trivial. But it is a quotient of $\cH^1\pi_{2,+}i^+\cM$, which is known to be a direct sum of copies of $\cO_{\gm}$, so it will also be $\cO_{\gm}$. This ends the proof of the second couple of isomorphisms.

Let us prove now points ii and iii. Recall that we are only interested in knowing the rank of $\cH^iK_n$; since $\iota_n$ is an étale map, we just need to show the analogous statement for $\iota_n^+K_n=\bar{K}_{n|\gm}=\pi_{2,+}\bR\Gamma_{[\cYn]}\cO_{\AA^n\times\gm}[1]$. Consider then the excision triangle associated with $\cN:=\bR\Gamma_{[\bar{\cY}_{n,w}]}\cO_{\PP^n\times\gm}$ and the diagram $\AA^n\times\gm\stackrel{j}{\ra} \PP^n\times\gm\stackrel{i}{\leftarrow}\PP^{n-1}\times\gm$. Analogously as before, if we apply $\pi_{2,+}$ to it, we will obtain the triangle
\begin{equation}\label{trianguloY}
\pi_{2,+}\cN\lra\bar{K}_n[-1]\lra\pi_{2,+}\bR\Gamma_{[\cY_{n,w}^{\infty}]}\cO_{\PP^{n-1}\times\gm}\lra.
\end{equation}
Recall that $\cY_{n,w}^{\infty}$ is the section at infinity of $\bar{\cY}_{n,w}$ and the product of the arrangement of hyperplanes $\bar{A}$ with $\gm$. Following the same argument used for the first point and using Lemma \ref{arreglo} again, we can claim that for $i\leq1$,
$$\cH^i\pi_{2,+}\bR\Gamma_{[\cY_{n,w}^{\infty}]}\cO_{\PP^{n-1}\times\gm}\cong \left\{\begin{array}{lc}
\cO_{\gm}^{\binom{n}{i+n-2}+1} & \text{if }2\nmid i+n\\
\cO_{\gm}^{\binom{n}{i+n-2}} & \text{if }2\mid i+n\end{array}\right.$$
Note that $\cH^i\pi_{2,+}\cN$ is the invariant part of $\cH^{i-1} p_{n,+}\cO_{\cXn}$ under the action of $G$. As said in the proof of the first point, the smooth fibers of $\cXn$ have the same cohomology as a Fermat hypersurface of $\PP^n$, so for $-(n-2)\leq i\leq0$, the cohomologies $\cH^i\pi_{2,+}\cN$ will vanish if $i+n$ is odd and will be $\cO_{\gm}$ if $i+n$ is even. Therefore, for $i\leq-1$ and $i+n$ odd we can extract from the triangle \ref{trianguloY} the exact sequences
$$0\ra\cH^{i-1}\bar{K}_n\ra\cO_{\gm}^{\binom{n}{i+n-2}+1}\ra\cH^{i+1}\pi_{2,+}\cN\ra \cH^{i}\bar{K}_n\ra\cO_{\gm}^{\binom{n}{i+n-1}}\ra0,$$
and if $i=-1$ and $i+n$ even,
$$0\lra\cH^{-1}\bar{K}_n\lra\cO_{\gm}^{\binom{n}{n-2}+1}.$$
The statement of point ii follows just by considering the cases in which $\cH^{i+1}\pi_{n+1,+}\cN$ maps either to zero or to a subobject of rank one of $\cH^i\bar{K}_n$, and point iii is just an easy consequence of the exact sequences above and Remark \ref{semisimple}.
\end{proof}

Let us construct $K_n$ in an alternative, more useful way. Let $\lambda_n$ be the morphism defined by
$$\begin{array}{rcl}\lambda_{n}:\AA^n&\longrightarrow&\AA^1\\
\underline{x}&\longmapsto& x_1^{w_1}\cdot\ldots\cdot x_n^{w_n}\cdot(1-x_1-\ldots-x_n)^{w_0}.\end{array}$$
Let $Z_n=\lambda_n^{-1}(\gm)=\left\{\underline{x}\in\gm^n\text{ : }x_1+\ldots+x_n\neq1\right\}$. Therefore, since $\cZ_n$ is a graph, we can take $K_n=\lambda_{n,+}\cO_{Z_n}$.

We will make use of an inductive process to be detailed in the next section, so let us obtain all the information about $K_1$ that we will make use of during such process.

\begin{lema}\label{inicio}
$K_1$ is a regular $\cD$-module over $\gm$ of generic rank $d_1$ and it has a unique singularity at $\gamma_1$.
\end{lema}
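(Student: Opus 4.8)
The plan is to realize $K_1$ as a $\DD$-module direct image of $\OO$ along a finite cover of $\gm$ and to read off all three assertions from the ramification of that cover. As observed just above the statement, $K_1=\lambda_{1,+}\OO_{Z_1}$, where $\lambda_1\colon Z_1=\AA^1-\{0,1\}\to\gm$ is the map $x\mapsto x^{w_1}(1-x)^{w_0}$. Since $x^{w_1}(1-x)^{w_0}$ is a nonconstant polynomial of degree $d_1$, the morphism $\AA^1\to\AA^1$ it defines is finite of degree $d_1$, and $\lambda_1\colon Z_1\to\gm$ is its base change along $\gm\hookrightarrow\AA^1$, hence itself finite (in particular proper) of degree $d_1$. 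Consequently $K_1$ is a single $\DD_{\gm}$-module placed in degree zero, and it is regular holonomic because $\OO_{Z_1}$ is and direct image preserves regular holonomicity. This already yields the regularity assertion.

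For the generic rank and the location of the singularity I would compute $\lambda_1'(x)=x^{w_1-1}(1-x)^{w_0-1}(w_1-d_1x)$. The only critical point of $\lambda_1$ lying in $Z_1$ is $x_0=w_1/d_1$; it is a simple critical point, since $w_1-d_1x$ vanishes simply there and the remaining factors are nonzero; and its critical value is $\lambda_1(x_0)=w_0^{w_0}w_1^{w_1}d_1^{-d_1}=\gamma_1$. Hence $\lambda_1\colon Z_1\to\gm$ is finite and étale over $\gm-\{\gamma_1\}$, so over that open subset $K_1$ is just $\lambda_{1,*}\OO$, a connection of rank $d_1$. This gives the generic rank $d_1$ and shows that $\gamma_1$ is the only point of $\gm$ at which $K_1$ can fail to be a connection.

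It remains to see that $K_1$ is genuinely singular at $\gamma_1$. The cleanest route I see is via Euler characteristics: since $\lambda_1$ is proper we have $\pi_{\gm,+}K_1=\pi_{Z_1,+}\OO_{Z_1}$, so $\chi(K_1)=\chi(\OO_{Z_1})=\chi_{\mathrm{top}}(\AA^1-\{0,1\})=-1$, whereas a regular integrable connection on $\gm$ has vanishing Euler characteristic (for instance by proposition \ref{charcero}, since its composition factors are Kummer $\DD$-modules, or simply because $\chi_{\mathrm{top}}(\gm)=0$); hence $K_1$ cannot be smooth over all of $\gm$. Alternatively one passes to the completion at $\gamma_1$: writing $\lambda_1-\gamma_1=(x-x_0)^2v(x)$ with $v(x_0)\neq0$ and extracting a formal square root of $v$, one sees that $\lambda_1$ is, formally around $\gamma_1$, the disjoint union of one ramified double cover $z\mapsto z^2$ and $d_1-2$ unramified sheets, so $K_1\otimes\kk((s))\cong\operatorname{Loc}(0,1)^{\oplus(d_1-1)}\oplus\operatorname{Loc}(\tfrac12,1)$ at $\gamma_1$, and the exponent $\tfrac12\notin\Z$ prevents $K_1$ from extending to a connection there. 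The only step here that is not routine bookkeeping about finite morphisms and a one-variable derivative computation is this last non-vanishing of the singularity at $\gamma_1$; that is where the actual argument lies.
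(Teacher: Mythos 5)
Your argument is correct, and its skeleton coincides with the paper's: both obtain the generic rank $d_1$ from the fact that $\lambda_1$ is finite \'etale of degree $d_1$ over $\gm-\{\gamma_1\}$ (you make the paper's implicit assertion explicit by computing the unique critical point $w_1/d_1$ and its critical value $\gamma_1$), and both compute $\chi(K_1)=\chi(\OO_{Z_1})=-1$ from $\pi_{\gm,+}K_1=\pi_{Z_1,+}\OO_{Z_1}$ (no properness is needed there, only functoriality of direct images). The genuine divergence is the final step. The paper uses additivity of $\chi$ together with proposition \ref{hyperLS}: the composition factors of $K_1$ are Kummer modules plus a single irreducible hypergeometric (possibly punctual), which carries the only possible singularity, necessarily at $\gamma_1$ since $K_1$ is smooth elsewhere. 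You instead argue more elementarily that a regular connection on all of $\gm$ would have $\chi=0$ --- here the justification via $\chi_{\mathrm{top}}(\gm)=0$, i.e.\ the Euler--Poincar\'e formula, is the right one, since invoking proposition \ref{charcero} would presuppose that such a connection has Kummer composition factors --- or, even more directly, you compute the formal structure at $\gamma_1$ as $\operatorname{Loc}(0,1)^{\oplus(d_1-1)}\oplus\operatorname{Loc}(\tfrac12,1)$ from the simple ramification point. Your route buys an explicit verification that the singularity at $\gamma_1$ is genuinely present, together with its formal exponents, without appealing to the hypergeometric classification; the paper's route is shorter given that propositions \ref{charcero} and \ref{hyperLS} are already set up and reused later.
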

\begin{proof}
Let $C=\lambda_1^{-1}(\gamma_1)$. Then, $\lambda_1$ is an étale morphism from $Z_1-C$ to $\gm-\{\gamma_1\}$ of degree $d_1$, so $\lambda_{1,+}\cO_{Z_1-C}$ will actually be a unique $\cD_{\gm-\{\gamma_1\}}$-module; moreover, it will be a locally free $\cO_{\gm-\{\gamma_1\}}$-module of rank $d_1$, which will be the generic rank of $K_1$.

On the other hand, $Z_1=\gm-\{1\}$ and $\pi_{\gm,+}K_1=\pi_{\gm-\{1\},+}\cO_{\gm-\{1\}}$, so the Euler-Poincaré characteristic of $K_1$ will be equal to that of $\cO_{\gm-\{1\}}$, which is -1. Therefore, thanks to the additivity of the characteristic and Proposition \ref{hyperLS}, we will be able to find among the composition factors of $K_1$ some eventually trivial Kummer $\cD$-modules and an irreducible hypergeometric $\cD_{\gm}$-module (punctual or not), and so its only singular point within $\gm$ must be $\gamma_1$.
\end{proof}

One could wonder why the assumptions on $w_0,\ldots,w_n$ are like that. The first condition that we could try to erase is that all of the $w_i$ are positive. If not, for some $r\geq0$ and every $i=0,\ldots,r$ we would have that $w_i=0$. Under this assumption, the morphisms $'p_n$ and $''p_n$ would be smooth over the whole of $\AA^1$, and $K_n$ would be the direct image of $\cO_{\gm^n}$ by the morphism $\lambda_n(\xx)=x_{r+1}^{w_{r+1}}\cdot\ldots\cdot x_n^{w_n}$. This context is already discussed in \cite[\S4]{CaKoszul}, and we would still have that $\bar{K}_n$ is constant, and even $K_n$, if $\gcd(w_{r+1},\ldots,w_n)=1$.

So that case is not relevant, but we could also consider the case in which we had that $\gcd(w_0,\ldots,w_n)>1$. Then there would be an integer $e$ dividing all of the $w_i$, so that $G$ and $\cYn$ would be the disjoint union of their irreducible components, all of them differing only by a $e$-th root of unity. Going downstairs to the context of $\cZ_{n,w}$ and back to $\cYn$, we would have that
$$'p_{n,+}\cO_{\cYn}\cong\bigoplus_{\zeta\in\mu_e}h_{\zeta,+} {}'p_{n,+}\cO_{\cY_{n,w/e}},$$
where $h_{\zeta}$ is the homothety of $\AA^1$ defined by $\lambda\mapsto\zeta\lambda$, so in the end, we could know $\bar{K}_n$ by computing $'p_{n,+}\cO_{\cY_{n,w/e}}$, reducing the calculation to the original setting.

\section{Inductive process}\label{induccion}

In this section we will move forward towards the proof of Theorem \ref{teorema2}, finding some of the desired properties of $K_n$. All of the proofs are inductive, and in fact, together with those of the next section, can be conceived as a long proof divided into several pieces, each of them being approximately a sentence of the statement of the theorem. In spite of the discussion at the last paragraph of the previous section about the case in which $\gcd(w_0,\ldots,w_n)>1$, we should be able to allow this fact when working with $K_n$, for as the reader might notice, we can have a $(n+1)$-tuple $(w_0,\ldots,w_n)$ such that every sub $n$-tuple resulting from it by taking a single element off shares a common divisor ($(6,10,15)$ for instance). This does not alter the validity of the following propositions, and although we are not so interested in that case, we will cover it as well, denoting by $e_r$ the greatest common divisor of $w_0,\ldots,w_r$, for any value of $r$.

Despite the interdependence of the propositions of this section because of the induction, the reader will be able to check that we make no circular reasoning. We will explain this in more detail.

Let us factor $\lambda_n$ through $\gm^2$ as
$$Z_n\stackrel{(\pi_n,\lambda_n)}{\longrightarrow}\gm^2 \stackrel{\pi_2}{\longrightarrow}\gm,$$
so that $K_n=\pi_{2,+}(\pi_n,\lambda_n)_+\cO_{Z_n}$. Call $L_n$ the complex $(\pi_n,\lambda_n)_+\cO_{Z_n}$.

Consider now the isomorphisms $\phi_n$ from $(\gm-\{1\})\times\gm$ to itself given by $(z,\lambda)\mapsto(z,\lambda/(z^{w_n}(1-z)^{d_{n-1}}))$, and $\psi_n$ from $Z_n-\{x_n=1\}$ to $Z_{n-1}\times(\gm-\{1\})$ given by $(x_1,\ldots,x_n)\mapsto(x_1/(1-x_n),\cdots,x_{n-1}/(1-x_n),x_n)$. Those morphisms form the cartesian diagram
$$\xymatrixcolsep{3pc}\xymatrix{\ar @{} [dr] |{\Box} Z_n-\{x_n=1\} \ar[d]_{(\pi_n,\lambda_n)} \ar[r]^{\psi_n} & Z_{n-1}\times(\gm-\{1\}) \ar[d]^{\pi_2\times\lambda_{n-1}\pi_1}\\
(\gm-\{1\})\times\gm \ar[r]^{\phi_n} & (\gm-\{1\})\times\gm},$$
so by base change and the Künneth formula, $L_{n|(\gm-\{1\})\times\gm}\cong(\pi_2\phi_n)^+K_{n-1}$. This expression does not give us all the information about $K_n$ directly from $K_{n-1}$; to solve this issue we act as follows.

For each $n$, the process of finding $K_n$ depends on two inductive steps. Let us write, for each $n\geq2$, $T_n=\left\{\underline{x}\in\gm^{n-1}\,\text{ : }\, x_1+\ldots+x_{n-1}\neq0\right\}$. Each $T_n$ can be seen as a smooth closed subvariety of $Z_n$ by the identification $T_n\cong T_n\times\{1\}$, and we will do that in what follows. From the diagram $Z_n-T_n\stackrel{j}{\ra}Z_n\stackrel{i}{\leftarrow}T_n$ we can get the triangle
$$\cO_{Z_n}\lra j_+j^+\cO_{Z_n}\lra i_+i^+\cO_{Z_n}\lra.$$
Applying $(\pi_n,\lambda_n)_+$ to it we get a new one:
$$(\pi_n,\lambda_n)_+\cO_{Z_n}\longrightarrow (\pi_n,\lambda_n)_+j_+j^+\cO_{Z_n} \longrightarrow(\pi_n,\lambda_n)_+i_+i^+\cO_{Z_n}\lra.$$
In other words, defining $M_n:=\lambda_{n|T_n,+}\cO_{T_n}$, we have
$$L_n\longrightarrow j_+(\pi_2\phi_n)^+K_{n-1} \longrightarrow i_+M_n\lra,$$
where $j$ and $i$ now stand for the inclusions $(\gm-\{1\})\times\gm\hra\gm^2\leftarrow\{1\}\times\gm$ and $M_n$ tells us what we lose when doing induction over $\left(\gm-\{1\}\right)\times\gm$ instead of $\gm^2$. We will calculate its expression later on. What we are interested in is noting that $K_n$ depends on $K_{n-1}$ and $M_n$, for applying $\pi_{2,+}$ to the last triangle we obtain the one that is going to be useful for us:
\begin{equation}\label{trianguloclave}
K_n\longrightarrow \pi_{2,+}(\pi_2\phi_n)^+K_{n-1} \longrightarrow M_n\lra.
\end{equation}
In the next proposition we will see that our method to make $M_n$ explicit depends only on $K_{n-2}$, and thus the induction can be made correctly. Before going on, we will provide some useful lemmas:

\begin{lema}\label{descdirec}
Let $X=Y\times Z$ be the product of two smooth affine varieties such that $Z$ is of dimension one. Let $K$ be a complex of coherent $\cD_X$-modules. Then for any integer $i$ we have the exact sequence
$$0\lra\cH^0\pi_{1,+}\cH^iK\lra \cH^i\pi_{1,+}K\lra\cH^{-1}\pi_{1,+}\cH^{i+1}K\lra0.$$
\end{lema}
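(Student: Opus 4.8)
The plan is to reduce the whole statement to one structural fact — that the direct image $\pi_{1,+}$ has cohomological amplitude $[-1,0]$ on $\DD_X$-modules — and then obtain the three-term sequence from the standard truncation machinery (equivalently, from the two-column hyperderived spectral sequence).

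First I would establish the amplitude bound. Since $Z$ is affine of dimension one, $\pi_1\colon X=Y\times Z\to Y$ is an affine smooth morphism of relative dimension one, so for a single $\DD_X$-module $\MM$ the complex $\pi_{1,+}\MM$ is computed by $\RR\pi_{1,*}$ applied to the relative de Rham complex $[\MM\to\MM\otimes_{\OO_X}\Omega^1_{X/Y}]$ placed in degrees $-1$ and $0$; as $\pi_1$ is affine, $\RR\pi_{1,*}$ is exact on quasi-coherent sheaves, whence $\HH^j\pi_{1,+}\MM=0$ for $j\notin\{-1,0\}$. It follows that for a bounded complex $K$ concentrated in degrees $\leq i$ one has $\HH^j\pi_{1,+}K=0$ for $j>i$, and for $K$ concentrated in degrees $\geq i$ one has $\HH^j\pi_{1,+}K=0$ for $j<i-1$.

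Next I would apply $\pi_{1,+}$ to the canonical truncation triangle $\tau_{\leq i}K\to K\to\tau_{>i}K\xrightarrow{+1}$ and pass to the long exact sequence of cohomology. The two outer terms are identified by one more truncation each: from $\tau_{\leq i-1}K\to\tau_{\leq i}K\to\HH^iK[-i]\xrightarrow{+1}$ together with the bound above one gets $\HH^i\pi_{1,+}\tau_{\leq i}K\cong\HH^0\pi_{1,+}\HH^iK$ and $\HH^{i+1}\pi_{1,+}\tau_{\leq i}K=0$; from $\HH^{i+1}K[-i-1]\to\tau_{>i}K\to\tau_{>i+1}K\xrightarrow{+1}$ together with the bound one gets $\HH^{i-1}\pi_{1,+}\tau_{>i}K=0$ and $\HH^i\pi_{1,+}\tau_{>i}K\cong\HH^{-1}\pi_{1,+}\HH^{i+1}K$. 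Substituting these into the relevant stretch
$$\HH^{i-1}\pi_{1,+}\tau_{>i}K\to\HH^i\pi_{1,+}\tau_{\leq i}K\to\HH^i\pi_{1,+}K\to\HH^i\pi_{1,+}\tau_{>i}K\to\HH^{i+1}\pi_{1,+}\tau_{\leq i}K$$
collapses it to the asserted exact sequence $0\to\HH^0\pi_{1,+}\HH^iK\to\HH^i\pi_{1,+}K\to\HH^{-1}\pi_{1,+}\HH^{i+1}K\to0$. (Equivalently: the amplitude bound confines the spectral sequence $E_2^{p,q}=\HH^p\pi_{1,+}\HH^qK\Rightarrow\HH^{p+q}\pi_{1,+}K$ to the two columns $p=-1,0$, so it degenerates at $E_2$, and the abutment filtration on $\HH^i\pi_{1,+}K$ is precisely the two-step filtration above, with $E_\infty^{0,i}=\HH^0\pi_{1,+}\HH^iK$ the subobject.)

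I do not expect any genuine obstacle here; the two points that call for a little care are the justification of the amplitude bound — namely that $\pi_1$ being affine kills the higher $\RR\pi_{1,*}$ and that the relative de Rham complex of a single module has only two nonzero terms — and keeping the truncation bookkeeping straight so that the correct graded piece $\HH^0\pi_{1,+}\HH^iK$ appears as the subobject and $\HH^{-1}\pi_{1,+}\HH^{i+1}K$ as the quotient, rather than the other way round.
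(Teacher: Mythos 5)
Your proposal is correct and follows essentially the same route as the paper: the amplitude bound $[-1,0]$ for $\pi_{1,+}$ coming from the two-term relative de Rham complex and the affineness of the projection, followed by the truncation triangle $\tau_{\leq i}K\to K\to\tau_{\geq i+1}K$ and its long exact sequence. Your extra care in identifying the outer terms via further truncations (or the two-column spectral sequence) just fills in what the paper states more briefly.
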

\begin{proof}
Let us fix $i$, consider the truncation triangle
$$\tau_{\leq i}K\lra K\lra\tau_{\geq i+1}K\lra$$
and apply $\pi_{1,+}$ to it. Since $Y$ and $Z$ are affine, $\cH^k\pi_{1,+}\tau_{\leq i}K=0$ and $\cH^l\pi_{1,+}\tau_{\geq i+1}K=0$ for any $k>i$ and $l<i$, respectively, because $\pi_1$ is of relative dimension one and $\pi_{1,+}$ is just taking a relative de Rham complex. Moreover, we can also deduce that $\cH^i\pi_{1,+}\tau_{\leq i}K=\cH^0\pi_{1,+}\cH^iK$ and $\cH^i\pi_{1,+}\tau_{\geq i+1}K=\cH^{-1}\pi_{1,+}\cH^{i+1}K$. Therefore, the long exact sequence of cohomology of the triangle above contains in the $i$-th degree the piece we were looking for.
\end{proof}

\begin{lema}\label{sicigiahom}
Let $w_0,\ldots,w_n$ be an $(n+1)$-tuple of positive integers, whose sum is $d_n$, and let $f=x_1^{w_1}\cdot\ldots\cdot x_n^{w_n}(x_1+\ldots+x_n)^{w_0}$. Then, the syzygies of the Jacobian ideal $J_f=(f,f'_1,\ldots,f'_n)\subseteq\kk[\xx]$ are generated as a $\kk[\xx]$-submodule of $\kk[\xx]^{n+1}$ by the Euler relation $(-d_n,x_1,\ldots,x_n)$ and the Koszul-like syzygies
$$\frac{x_ix_j(x_1+\ldots+x_n)}{f}(f'_je_i-f'_ie_j),\text{ for all }1\leq i<j\leq n.$$
\end{lema}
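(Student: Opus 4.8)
The plan is to argue entirely inside the polynomial ring $\kk[\xx]=\kk[x_1,\dots,x_n]$, writing $L=x_1+\dots+x_n$ so that $f=\bigl(\prod_{i=1}^n x_i^{w_i}\bigr)L^{w_0}$ is homogeneous of degree $d_n$. First I would record two elementary identities. Euler's relation gives $\sum_{i=1}^n x_i f'_i=d_n f$, hence the syzygy $(-d_n,x_1,\dots,x_n)$. Differentiating directly, $f'_i=\bigl(\tfrac{w_i}{x_i}+\tfrac{w_0}{L}\bigr)f$, which rearranges to the \emph{key identity} $x_i L\,f'_i=(w_iL+w_0x_i)f$. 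From it, $\tfrac{x_ix_jL}{f}f'_j=x_i(w_jL+w_0x_j)\in\kk[\xx]$, so the ``Koszul-like'' elements
\[ T_{ij}:=\tfrac{x_ix_jL}{f}(f'_je_i-f'_ie_j)=x_i(w_jL+w_0x_j)e_i-x_j(w_iL+w_0x_i)e_j \]
really lie in $\kk[\xx]^{n+1}$ (here $e_0,\dots,e_n$ is the standard basis, with $e_0\leftrightarrow f$ and $e_i\leftrightarrow f'_i$); and substituting the key identity turns $x_i(w_jL+w_0x_j)f'_i$ into $\tfrac{(w_iL+w_0x_i)(w_jL+w_0x_j)}{L}f$, which is symmetric in $i,j$, so each $T_{ij}$ is a syzygy with vanishing $f$-component.

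Next I would parametrize every syzygy. Given $(a_0,\dots,a_n)$ with $a_0f+\sum_i a_if'_i=0$, I divide by $f$ in $\kk(\xx)$ and clear denominators by multiplying through by $x_1\cdots x_nL$, obtaining $a_0(x_1\cdots x_n)L+L\sum_i w_ia_i\hat x_i+w_0(x_1\cdots x_n)\sum_i a_i=0$, where $\hat x_i=\prod_{k\neq i}x_k$. For $n\ge 2$, unique factorization forces $L\mid\sum_i a_i$ (as $L$ is prime, prime to $x_1\cdots x_n$, and $w_0\neq 0$); writing $\sum_i a_i=Lb$ and dividing by $L$, the same kind of argument gives $x_i\mid a_i$ for each $i$ (since $x_i$ divides every remaining term, $\gcd(x_i,\hat x_i)=1$, $w_i\neq0$). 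Setting $a_i=x_ic_i$ and substituting back, the syzygy condition becomes exactly $a_0=-w_0b-\sum_i w_ic_i$ together with the single relation $\sum_i x_i(c_i-b)=0$, and conversely any such datum yields a syzygy (the $n=1$ case is immediate and produces only the Euler relation). Since $x_1,\dots,x_n$ is a regular sequence, the tuples $(c_i-b)_i$ range exactly over its Koszul syzygy module; letting $b$ vary freely, this identifies the syzygy module of $J_f$, $\kk[\xx]$-linearly, with $\kk[\xx]\cdot\mathrm{Eul}\oplus\mathrm{Syz}(x_1,\dots,x_n)$, where $\mathrm{Eul}=(-d_n,x_1,\dots,x_n)$ comes from $c_i=b$ and the second summand is generated by the tuples $S_{ij}$ arising from the Koszul generators $K_{ij}=x_je_i-x_ie_j$ (here I use that $d_n,w_0$ are nonzero in $\kk$ to split off the first summand).

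It remains to see that, together with $\mathrm{Eul}$, the $T_{ij}$ generate the same submodule as the $S_{ij}$. Tracing $T_{ij}$ through the parametrization writes $T_{ij}=(w_jx_i-w_ix_j)\mathrm{Eul}+\sigma_{ij}$ with $\sigma_{ij}$ in the $\mathrm{Syz}(x_1,\dots,x_n)$ summand; expanding $\sigma_{ij}$ in the Koszul basis yields, after a short computation, the clean formula $\sigma_{ij}=w_0K_{ij}+w_jR_i-w_iR_j$, where $R_i:=\sum_{l\neq i}K_{il}$ (convention $K_{li}=-K_{il}$, hence $\sigma_{ji}=-\sigma_{ij}$). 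The decisive observation is that $\sum_l R_l=0$: summing $\sigma_{ij}$ over $j\neq i$ then collapses to $d_nR_i$, so $R_i=\tfrac1{d_n}\sum_{j\neq i}\sigma_{ij}\in\langle\{\sigma_{kl}\}\rangle$, whence $K_{ij}=\tfrac1{w_0}(\sigma_{ij}-w_jR_i+w_iR_j)\in\langle\{\sigma_{kl}\}\rangle$. Therefore $\langle\mathrm{Eul},\{T_{ij}\}\rangle=\kk[\xx]\cdot\mathrm{Eul}\oplus\mathrm{Syz}(x_1,\dots,x_n)$, the full syzygy module.

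Steps one and two are routine (elementary differentiation, unique factorization, Koszul theory for a regular sequence). The only genuinely delicate point is the matching in step three: approached naively it produces an unwieldy $\binom n2\times\binom n2$ determinant in the weights, and what makes it painless is precisely the introduction of the auxiliary sums $R_i=\sum_{l\neq i}K_{il}$ together with the identity $\sum_l R_l=0$, which lets one invert the relation between the $\sigma_{ij}$ and the $K_{ij}$ explicitly.
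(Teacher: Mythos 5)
Your proof is correct, but it takes a genuinely different route from the paper's in its second half. The opening moves coincide in spirit: like you, the paper deduces $x_i\mid a_i$ from a divisibility argument (it phrases this via the syzygies of the monomial ideal $(\xx^{w-e_1},\ldots,\xx^{w-e_n})$ after factoring $f'_i=\xx^{w-e_i}\sigma^{w_0-1}l_i$, with $\sigma=x_1+\cdots+x_n$ and $l_i=w_i\sigma+w_0x_i$, rather than by your clearing-of-denominators computation, and it first disposes of the $f$-slot using homogeneity and the Euler relation instead of carrying $a_0$ and the auxiliary parameter $b$ along). The decisive difference is the choice of regular sequence: after writing $a_i=x_ib_i$, the paper observes that $(b_1,\ldots,b_n)$ is a syzygy of $(l_1,\ldots,l_n)$, which is regular because it is the image of $(x_1,\ldots,x_n)$ under an invertible linear substitution (determinant $w_0^{n-1}d_n$ by Sylvester), so its Koszul relations $l_je_i-l_ie_j$ translate back \emph{verbatim} into the stated generators $\frac{x_ix_j\sigma}{f}(f'_je_i-f'_ie_j)$, and the proof ends there. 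You instead identify the syzygy module with $\kk[\xx]\cdot\mathrm{Eul}\oplus\operatorname{Syz}(x_1,\ldots,x_n)$, whose natural Koszul generators are \emph{not} the stated ones, and must then carry out the change-of-generators step with $\sigma_{ij}=w_0K_{ij}+w_jR_i-w_iR_j$, $\sum_lR_l=0$ and $\sum_{j\neq i}\sigma_{ij}=d_nR_i$ (I checked these identities; they are right, and they do use $w_0,d_n\neq0$, harmless in characteristic zero). So your argument is more elementary and fully explicit, at the price of an extra combinatorial inversion that the paper's choice of the regular sequence $(l_1,\ldots,l_n)$ renders unnecessary; conversely, your parametrization has the small bonus of exhibiting the whole syzygy module, Euler component included, in closed form.
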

\begin{proof}
For the sake of simplicity, let us write $\xx^w$ and $\sigma$ for $x_1^{w_1}\cdot\ldots\cdot x_n^{w_n}$ and $x_1+\ldots+x_n$, respectively, so that $f=\xx^w\sigma^{w_0}$, and $l_i=w_i\sigma+w_0x_i$ for each $i=1,\ldots,n$, in such a way that $f'_i=\xx^{w-e_i}\sigma^{w_0-1}l_i$.

$f$ is a homogeneous polynomial of degree $d_n$, so the Euler syzygy appears naturally among the generators because of having its first component of degree zero. That is why we can restrict ourselves to find the syzygies of $(f'_1,\ldots,f'_n)$. Let $(a_1\ldots,a_n)\in\kk[\xx]^n$ such that $\sum_if'_ia_i=0$, or in other words
$$\sigma^{w_0-1}\sum_{i=1}^na_i\xx^{w-e_i}l_i=0.$$
This means that $(a_1l_1,\ldots,a_nl_n)$ is a syzygy of the ideal $(\xx^{w-e_1},\ldots,\xx^{w-e_n})$, so for each $i$, $x_i$ must divide $a_il_i$, and thus there will exist $n$ new polynomials $b_1,\ldots,b_n$ such that $a_i=x_ib_i$ for every $i$, because $(x_i,l_i)=1$. Therefore, $(b_1,\ldots,b_n)$ is a syzygy of $(l_1,\ldots,l_n)$, which is a regular sequence in $\kk[\xx]$, just because it is an isomorphic image of $(x_1,\ldots,x_n)$. Indeed, the matrix of the change of basis has a determinant equal to $w_0^{n-1}d_n$ by Sylvester's determinant theorem. As a consequence, there must exist $g_{(i,j)}\in\kk[\xx]$ for every couple $1\leq i<j\leq n$ such that
$$(b_1,\ldots,b_n)=\sum_{1\leq i<j\leq n}g_{(i,j)}(l_je_i-l_ie_j),$$
and then,
$$(a_1,\ldots,a_n)=\sum_{1\leq i<j\leq n}g_{(i,j)}\frac{x_ix_j\sigma}{f}(f'_je_i-f'_ie_j).$$
\end{proof}

\begin{prop}\label{homogeneo}
For each $n$, $M_n$ is concentrated in degrees $-(n-2)$ to zero, $$\cH^iM_n\cong\bigoplus_{a=1}^{e_{n-1}}\cK_{a/e_{n-1}}^{\binom{n-1}{i+n-2}}$$
whenever $-(n-2)\leq i\leq-1$, and in degree zero,
$$\cH^{0}M_n\cong\bigoplus_{a=1}^{e_{n-1}}\cK_{a/e_{n-1}}^{n-2}\oplus \bigoplus_{a=1}^{d_{n-1}}\cK_{a/d_{n-1}}.$$
\end{prop}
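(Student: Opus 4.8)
The plan is to realise $M_n$ as a direct image and exploit the homogeneity of the defining monomial. Under the identification $T_n\cong T_n\times\{1\}\subset Z_n$, the function $\lambda_{n|T_n}$ equals $h_{(-1)^{w_0}}\circ g$, where $g=f|_{T_n}\colon T_n\ra\gm$ is defined by $f=x_1^{w_1}\cdots x_{n-1}^{w_{n-1}}\sigma^{w_0}$ with $\sigma=x_1+\cdots+x_{n-1}$, and $T_n=\gm^{n-1}\setminus\{f=0\}$. Since $h_{(-1)^{w_0},+}$ carries each Kummer module to an isomorphic one, it suffices to study $g_+\OO_{T_n}$. Now $f$ is homogeneous of degree $d_{n-1}$ for the free scaling action of $\gm$ on $T_n$, and $g$ intertwines it with the $\gm$-action $t\cdot s=t^{d_{n-1}}s$ on the target, whose point stabiliser is $\mu_{d_{n-1}}$; hence $g_+\OO_{T_n}$ is $\gm$-equivariant for that action. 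Pulling back along $z\mapsto z^{d_{n-1}}$ trivialises any such equivariant $\DD_{\gm}$-module, so each $\HH^i(M_n)$ is a direct sum of copies of the $\KK_{a/d_{n-1}}$, $a=1,\dots,d_{n-1}$; moreover the category of such modules is $\mathrm{Rep}(\mu_{d_{n-1}})$, hence semisimple, so there are no nontrivial extensions among these summands and all short exact sequences below split. As $g$ and $T_n$ are affine, $M_n$ is concentrated in degrees $-(n-2),\dots,0$ (the bottom one nonzero because $\binom{n-1}{0}=1$), and it remains only to compute multiplicities.

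For the cohomology in negative degrees I would reduce to $K_{n-2}$. Writing $x_i=\sigma y_i$ identifies $T_n$ with $\gm_\sigma\times V$, $V=\{y\in\gm^{n-1}:\sum y_i=1\}$, under which $g$ is $(\sigma,y)\mapsto(\sigma^{d_{n-1}},\,\prod y_i^{w_i})$ followed by the multiplication $\gm\times\gm\ra\gm$. Recognising $(V,\prod y_i^{w_i})$ as $(Z_{n-2},\lambda_{n-2})$ for the $n$-tuple obtained from $(w_0,\dots,w_{n-1})$ by deleting one entry, the K\"unneth formula yields $M_n\cong h_{(-1)^{w_0},+}\,m_+\big((\bigoplus_{a=1}^{d_{n-1}}\KK_{a/d_{n-1}})\boxtimes K_{n-2}\big)$ — so $M_n$ depends only on $K_{n-2}$, whose structure is available by induction (for a sub-tuple of greatest common divisor $e'$, its cohomologies in degrees $-(n-3),\dots,-1$ are $\bigoplus_{a=1}^{e'}\KK_{a/e'}^{\binom{n-2}{i+n-3}}$ and $\HH^0$ is an extension of $(\bigoplus_{a=1}^{e'}\KK_{a/e'})^{n-2}$ by $\GG_{n-2}$). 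Turning the multiplication into a projection and applying Lemma \ref{descdirec}, $\HH^i M_n$ sits in a split exact sequence built from $\HH^0$ and $\HH^{-1}$ of $\pi_{s,+}$ of $\pi_z^+(\bigoplus_a\KK_{a/d_{n-1}})\otimes\rho^+\HH^j K_{n-2}$ on $\gm_z\times\gm_s$ (with $\rho(z,s)=s/z$ and $j=i$ or $i+1$). Since $\rho^+\KK_{a/e'}\cong\KK_{-a/e'}\boxtimes\KK_{a/e'}$, that $\pi_{s,+}$ is a sum, over pairs $(a,b)$ with $b/d_{n-1}\equiv a/e'\pmod{\Z}$, of copies of $\KK_{a/e'}$ weighted by the de Rham cohomology of $\OO_{\gm}$; such a pair exists exactly when $a/e'\in\tfrac{1}{e_{n-1}}\Z$ — using the elementary identity $\gcd(e',d_{n-1})=\gcd(w_0,\dots,w_{n-1})=e_{n-1}$ — and then $b$ is unique. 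So each of $\HH^0$, $\HH^{-1}$ contributes $\bigoplus_{a=1}^{e_{n-1}}\KK_{a/e_{n-1}}$ per copy of $\HH^j K_{n-2}$, and Pascal's rule $\binom{n-2}{i+n-3}+\binom{n-2}{i+n-2}=\binom{n-1}{i+n-2}$ assembles the two halves into $\HH^i(M_n)\cong\bigoplus_{a=1}^{e_{n-1}}\KK_{a/e_{n-1}}^{\binom{n-1}{i+n-2}}$ for $-(n-2)\le i\le-1$.

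In degree zero $\HH^1 K_{n-2}=0$, so $\HH^0(M_n)=\HH^0\pi_{s,+}\big(\pi_z^+(\bigoplus_a\KK_{a/d_{n-1}})\otimes\rho^+\HH^0 K_{n-2}\big)$. The part of $\HH^0 K_{n-2}$ lying over $(\bigoplus_{a=1}^{e'}\KK_{a/e'})^{n-2}$ contributes, by the same bookkeeping, the summand $\bigoplus_{a=1}^{e_{n-1}}\KK_{a/e_{n-1}}^{\,n-2}$; what remains is the contribution of the hypergeometric submodule $\FF_{n-2}\subset\GG_{n-2}$ together with the Kummer modules indexed by $C_{n-2}$, and pinning this down to be exactly $\bigoplus_{a=1}^{d_{n-1}}\KK_{a/d_{n-1}}$ is the crux. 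Here I would compute the top cohomology $\HH^0(M_n)=\HH^0(g_+\OO_{T_n})$ directly, as the cokernel of the twisted relative de Rham differential on $T_n$ in each of the $d_{n-1}$ graded components: by Lemma \ref{sicigiahom} the syzygies of the Jacobian ideal of $f$ are spanned by the Euler relation and the Koszul syzygies of the $f'_i$, which is precisely what makes that cokernel computable — it shows the interior contributions are governed by $e_{n-1}$ (because $f$ is an $e_{n-1}$-th power of a primitive monomial), while the class of $\mathrm{d}f/f$ adds one extra dimension to the top cohomology in \emph{every} character of $\mu_{d_{n-1}}$ and correspondingly lowers the $e_{n-1}$-part there from $\binom{n-1}{n-2}=n-1$ to $n-2$. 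Comparing with the long exact sequence of Lemma \ref{descdirec}, and using the semisimplicity from the first paragraph to exclude cancellation, then gives the stated $\HH^0(M_n)$. The main obstacle throughout is exactly this last step: translating the graded twisted de Rham complex of $g$ on the torus–hyperplane complement $T_n$ into a Koszul complex to which Lemma \ref{sicigiahom} applies, and tracking which graded piece produces which Kummer module.
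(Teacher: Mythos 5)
Your structural reduction is essentially the one in the paper: the substitution $x_i=\sigma y_i$ plays the same role as the isomorphism $\phi(\xx,\lambda)=(\lambda\xx,-\lambda)$ used there, both routes arrive at $M_n\cong\bigoplus_{a=1}^{d_{n-1}}\pi_{\gm,+}\left(K_{n-2}\otimes\KK_{-a/d_{n-1}}\right)\otimes_\kk\KK_{a/d_{n-1}}$ together with lemma \ref{descdirec}, and your $\gm$-equivariance remark (forcing every $\HH^iM_n$ to be a direct sum of modules $\KK_{a/d_{n-1}}$) is a pleasant shortcut the paper does not make explicit. The genuine gap is the point you yourself call the crux. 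In degrees $-1$ and $0$ the relevant input is $\HH^0K_{n-2}$, which is not a sum of Kummer modules: it contains $\GG_{n-2}$, whose composition factors are the hypergeometric $\FF_{n-2}$ and the Kummer modules $\KK_\beta$, $\beta\in C_{n-2}$, glued by an extension that is unknown at this stage of the induction. Whenever some $\beta\in C_{n-2}$ is congruent modulo $\Z$ to some $a/d_{n-1}$, the group $\HH^{-1}\pi_{\gm,+}\left(\GG_{n-2}\otimes\KK_{-a/d_{n-1}}\right)$ may a priori be nonzero, and it would then feed extra copies of $\KK_{a/d_{n-1}}$ both into $\HH^{-1}M_n$ (so your formula for $i=-1$, presented as pure bookkeeping ``per copy of $\HH^{j}K_{n-2}$'', is not yet justified) and into $\HH^0M_n$ (so the multiplicity one in the second sum is not yet justified). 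Your equivariance and semisimplicity observations cannot exclude this, because the potentially offending summands are themselves of the allowed form $\KK_{a/d_{n-1}}$; what is needed is a computation of multiplicities, not of extension classes.

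This is exactly where the paper does its real work, and where your proposal stops at a sketch. The paper invokes \cite[Theorem 1.1]{CaKoszul} to translate the exponents of $\HH^0M_n$ into the surjectivity of an explicit operator on a graded module, uses lemma \ref{sicigiahom} to resolve the syzygy equation of the resulting system, and is left with a relation whose coefficient operator on the remaining unknown is invertible unless $d_{n-1}\alpha\in\Z$, while for $d_{n-1}\alpha\in\Z$ the obstruction is independent of $\alpha$; this shows that all nontrivial classes $a/d_{n-1}$ occur in $\HH^0M_n$ with one and the same multiplicity $m+1$ (and in $\HH^{-1}M_n$ with multiplicity $m$), and finally $m=0$ is forced by the observation that $1/d_{n-1}$ can never lie in $C_{n-2}$, since every $w_i<d_{n-1}$. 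Your sketch (``the class of $\mathrm{d}f/f$ adds one extra dimension in every character, the interior contributions are governed by $e_{n-1}$'') asserts precisely this conclusion without an argument: you give no substitute for the uniformity-of-multiplicity step, nor for the final step pinning the multiplicity down to exactly one. Until the graded twisted de Rham computation you allude to is actually carried out (or the paper's Koszul-type argument is reproduced), the statement in degrees $0$ and $-1$ remains unproved, while your treatment of the degrees $i\leq-2$ and of the overall shape of $M_n$ is correct.
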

\begin{proof}
Before starting, let us assume that $e_{n-1}=1$; the statement of the proposition follows easily from the proof of this particular case by taking $[e_{n-1}]_+$. When $n=2$, $M_n\cong[d_1]_+\cO_{\gm}\cong\bigoplus_{i=1}^{d_1}\cK_{i/d_1}$ and we are done, so assume from now on that $n>2$.

Let $\phi:Z_{n-2}\times\gm\ra T_n$ be the isomorphism given by $\phi(\xx,\lambda)=(\lambda\xx,-\lambda)$. With this notation, $M_n\cong(\lambda_n\phi)_+\cO_{Z_{n-2}\times\gm}$. We will now decompose $\lambda_n\phi$ in a suitable way so that we can easily calculate $M_n$.

We can write $\lambda_n\phi=\pi_2\psi\left(\lambda_{n-2}\times[d_{n-1}]\right)$, with $\psi$ being the automorphism of $\gm^2$ defined by $\psi(x,y)=(x,(-1)^{w_{n-1}}xy)$. Therefore,
$$M_n\cong(\pi_2\psi)_+\left(\lambda_{n-2}\times[d_{n-1}]\right)_+\left(\cO_{Z_{n-2}}\boxtimes\cO_{\gm}\right) \cong\bigoplus_{a=1}^{d_{n-1}}(\pi_2\psi)_+\left(K_{n-2}\boxtimes\cK_{a/d_{n-1}}\right),$$
by the Künneth formula.

Now using the fact that $\psi$ is an automorphism, for each $a=1,\ldots,d_{n-1}$ we have that
$$(\pi_2\psi)_+\left(K_{n-2}\boxtimes\cK_{a/d_{n-1}}\right)\cong \pi_{2,+}\left(\left(\pi_1\psi^{-1}\right)^+K_{n-2}\otimes_{\cO_{\gm^2}} \left(\pi_2\psi^{-1}\right)^+\cK_{a/d_{n-1}}\right).$$
Now $\pi_1\psi^{-1}$ is nothing but the first projection of $\gm^2$, and $\left(\pi_2\psi^{-1}\right)^+\cK_{a/d_{n-1}}$ is easy to calculate, namely it is $\cK_{-a/d_{n-1}}\boxtimes\cK_{a/d_{n-1}}$.

Putting everything together and again by the Künneth formula,
$$M_n\cong\bigoplus_{a=1}^{d_{n-1}}\pi_{\gm,+}\left(K_{n-2}\otimes\cK_{-a/d_{n-1}}\right) \otimes_\kk\cK_{a/d_{n-1}}.$$

Let us now compute the summands above. When $a=d_{n-1}$, we are dealing with the constant part of $M_n$. However, we can calculate it in a easier way. The global de Rham cohomology of $M_n$ is that of $T_n$, which analogously to \cite[Prop. 5.2]{CaMV} can be computed to be
$$\bigoplus_{i=-(n-1)}^{-1}\kk^{\binom{n}{i+n-1}}[-i]\oplus\kk^{n-1}[0].$$
Then if we write the constant part of the cohomology of $M_n$ as $\cO_{\gm}^{c_i}$, for $-(n-2)\leq i\leq0$, we deduce that $c_{-(n-2)}=1$, $c_0=n-1$ and $c_i+c_{i+1}=\binom{n}{i+n-1}$ for every $i=-(n-2),\ldots,-1$. Therefore, $c_i=\binom{n-1}{i+n-2}$.

Assume now that $a\neq d_{n-1}$ and write $\alpha=-a/d_{n-1}$ for the sake of simplicity. We can make use of Lemma \ref{descdirec} to write
\begin{equation}\label{kummerMn}
\cH^i\pi_{\gm,+}\left(K_{n-2}\otimes\cK_\alpha\right)\cong \cH^0\pi_{\gm,+}\left(\cH^iK_{n-2}\otimes\cK_\alpha\right) \oplus \cH^{-1}\pi_{\gm,+}\left(\cH^{i+1}K_{n-2}\otimes\cK_\alpha\right).
\end{equation}
For $i=-(n-3),\ldots,-1$, the $i$-th cohomology of $K_{n-2}$ is
$$\cH^iK_{n-2}=\bigoplus_{a=1}^{e_{n-2}}\cK_{a/e_{n-2}}^{\binom{n-2}{i+n-3}},$$
so $\cH^iK_{n-2}\otimes\cK_\alpha$ will provide a nonzero global de Rham cohomology only if we have a Kummer $\cD$-module $\cK_{-\alpha}$ in $K_{n-2}$ such that both $e_{n-2}\alpha$ and $d_{n-1}\alpha$ are integers. That is equivalent to $w_i\alpha$ being an integer for every $i=0,\ldots,n-1$, that is to say, $e_{n-1}\alpha\in\ZZ$. But $e_{n-1}=1$, so no Kummer part at a degree below zero contributes to $M_n$, and so, for the values of $a$ under consideration and $i\leq-2$, $\cH^i\pi_{\gm,+}\left(K_{n-2}\otimes\cK_\alpha\right)=0$. In other words, $\cH^iM_n=\cO_{\gm}^{\binom{n-1}{i+n-2}}$ for $i=-(n-2),\ldots,-2$.

Using decomposition \ref{kummerMn} and the vanishing of the $\pi_{\gm,+}\left(\cH^j K_{n-2}\otimes\cK_\alpha\right)$ for $j\leq-1$, we can write
$$\cH^iM_n\cong\cO_{\gm}^{\binom{n-1}{i+n-2}}\oplus \bigoplus_{a=1}^{d_{n-1}-1}\cH^i\pi_{\gm,+}\left(\cH^0K_{n-2}\otimes\cK_{-a/d_{n-1}}\right) \otimes_\kk\cK_{a/d_{n-1}},$$
for $i=-1,0$. Let us remain in the case $a\neq d_{n-1}$. At degree zero, $\cH^0K_{n-2}$ lies in the middle of a short exact sequence. The right hand side is $\bigoplus_{a=1}^{e_{n-2}}\cK_{a/e_{n-2}}^{n-2}$, and we have already seen that it vanishes when we apply the functor $\pi_{\gm,+}\left(\bullet\otimes\cK_\alpha\right)$ to it. Therefore, keeping up with $M_n$, we have that for $i=-1,0$,
\begin{equation}\label{cohomMnGn}
\cH^iM_n\cong\cO_{\gm}^{\binom{n-1}{i+n-2}}\oplus \bigoplus_{a=1}^{d_{n-1}-1}\cH^i\pi_{\gm,+}\left(\cG_{n-2}\otimes\cK_{-a/d_{n-1}}\right) \otimes_\kk\cK_{a/d_{n-1}}.
\end{equation}
The composition factors of $\cG_{n-2}$ are $\cH_{n-2}$ and some Kummer $\cD$-modules. Since $\cH_{n-2}$ is an irreducible $\cD_{\gm}$-module of Euler-Poincaré characteristic $-1$, we can affirm that $\Ext_{\cD_{\gm}}^1(\cH_{n-2},\cK_\beta)\cong \Ext_{\cD_{\gm}}^1(\cK_\beta,\cH_{n-2})$ is an one-dimensional $\kk$-vector space, so up to our actual knowledge, $\cG_{n-2}$ is placed in the middle of a short exact sequence of one of the forms
\begin{equation}\label{sucesionescoro}
\begin{array}{c}
\displaystyle0\lra\cH_{n-2}\oplus\bigoplus_{\beta\in C'_{n-2}}\cK_\beta\lra\cG_{n-2}\lra \bigoplus_{\beta\in C''_{n-2}}\cK_\beta\lra0,\\[4pt]
\displaystyle0\lra\bigoplus_{\beta\in C'_{n-2}}\cK_\beta\lra\cG_{n-2}\lra \cH_{n-2}\oplus\bigoplus_{\beta\in C''_{n-2}}\cK_\beta\lra0,
\end{array}
\end{equation}
where $C'_{n-2}$ and $C''_{n-2}$ are two possibly empty, complementary subsets of $C_{n-2}$. The hypergeometric irreducible composition factor remains irreducible and hypergeometric when tensored with a Kummer $\cD$-module, so it will only contribute with one copy of $\kk$ at degree zero when we apply $\pi_{\gm,+}\left(\bullet\otimes\cK_\alpha\right)$ to it. It is easy to check that independently on $a\neq d_{n-1}$ and the position of the hypergeometric and Kummer composition factors on the exact sequence of $\cG_{n-2}$,
$$\cH^0\pi_{\gm,+}\left(\cG_{n-2}\otimes\cK_{-a/d_{n-1}}\right)\cong \cH^{-1}\pi_{\gm,+}\left(\cG_{n-2}\otimes\cK_{-a/d_{n-1}}\right)\oplus\kk,$$
so that we can find at $\cH^0M_n$ a copy of each $\cK_{a/d_{n-1}}$ more than at $\cH^{-1}M_n$.

In order to determine which Kummer $\cD$-modules appear at $\cH^0M_n$ (and then at $\cH^{-1}M_n$ too), we will apply \cite[Thm. 1.1]{CaKoszul} and try to study the surjectivity of the map $\Phi=(f-t,\dd_1+f'_1\fia,\ldots, \dd_{n-1}+f'_{n-1}\fia):R^n\ra R$ for non-integer values of $\alpha$, where $R=\kk((t))[x_1,\ldots,x_{n-1}]$, $f=x_1^{w_1}\cdot\ldots\cdot x_{n-1}^{w_{n-1}}(x_1+\ldots+x_{n-1})^{w_0}$ and $\fia=\dd_t+\alpha t^{-1}$. To see that we can assume, without loss of generality, that we take $c\in R$, homogeneous in the $x_i$ of degree $m\geq0$, and $a,b^1,\ldots,b^{n-1}\in R$, homogeneous too of degrees $m,m+1,\ldots,m+1$, respectively, such that $\Phi(a,b^1,\ldots,b^{n-1})=c$. Then what we are assuming reads
$$\left\{\begin{array}{rcl}
\displaystyle fa+\sum_{i=1}^{n-1}f'_i\fia b^i&=&0\\
\displaystyle-ta+\sum_{i=1}^{n-1}\dd_ib^i&=&c\end{array}.\right.$$
Thanks to Lemma \ref{sicigiahom}, from the first equation we know that there exist homogeneous polynomials $F, g_{(i,j)}\in R$ for every $1\leq i<j\leq n$, of respective degrees $m$ and $m-1$ in the $x_i$ (so that the $g_{(i,j)}$ must be zero if $m=0$), such that
\begin{equation}\label{sicigiasMn}
\begin{array}{l}
a=-d_{n-1}F\\
\displaystyle\fia b^i=x_iF+\sum_{j\neq i}\varepsilon(j-i)\frac{x_i x_j\sigma}{f}f'_jg_{(i,j)}\,,i=1,\ldots,n,\end{array}
\end{equation}
$\varepsilon$ being the sign operator. Note that $\fia$ is always bijective for the values of $\alpha$ under consideration. Substituting those values in the second equation of \ref{sicigiasMn} and applying the Euler relation for $F$, we get that
\begin{equation}\label{exponentesMn}
d_{n-1}\left(t+\frac{m+n}{d_{n-1}}\fiai\right)F+\sum_{1\leq i<j\leq n}L_{(i,j)}g_{(i,j)}=c,
\end{equation}
where
$$L_{(i,j)}=\left((w_j-w_i)\sigma+ (w_0-w_i)x_j-(w_0-w_j)x_i+x_il_j\dd_i-x_jl_i\dd_j\right)\fiai$$
for each pair $(i,j)$; they depend on $\alpha$ only because they apply $\fiai$ (which is always an isomorphism) to the $g_{(i,j)}$.

Call $A$ the operator acting on $F$. Now if $d_{n-1}\alpha$ is not an integer, $A$ is invertible, so the system has a solution. And when $d_{n-1}\alpha$ is an integer the lack of surjectivity is independent of the choice of $\alpha$, as we wanted to know; should equation \ref{exponentesMn} above have a solution, it would happen for any $\alpha$, changing appropriately $m$. In conclusion, all the classes modulo $\ZZ$ of the elements of $\{1,\ldots,d_{n-1}-1\}$ appear as exponents of $\cH^0M_n$ with the same multiplicity.

Then we can affirm that there exists some nonnegative integer $r$ such that, for $i=-1,0$,
$$\cH^iM_n\cong\cO_{\gm}^{\binom{n-1}{i+n-2}}\oplus\bigoplus_{a=1}^{d_{n-1}-1}\cK_{a/d_{n-1}}^{r+1+i}.$$
If we show that $r=0$ we will have finished the proof. Recall formula \ref{cohomMnGn} and how the Kummer summands are added to the last two cohomologies of $M_n$. At degree -1 they appear when tensoring some $\cK_\beta$, for $\beta\in C_{n-2}$, with some $\cK_\alpha$, for $d_{n-1}\alpha\in\ZZ$. We have just seen by the calculation of the exponents that if $r>0$ every Kummer $\cK_{a/d_{n-1}}$ should appear at $\cH^{-1}M_n$. However, note that $1/d_{n-1}$ can never be in $C_{n-2}$, for $w_i<d_{n-1}$ for any $i=0,\ldots,n-2$. Then $r=0$ and we are done.
\end{proof}

In the proof above we have stated in \ref{sucesionescoro} the two possibilities for the extension of $\cH_{n-2}$ and the Kummer $\cD$-modules to obtain $\cG_{n-2}$. \textit{A priori} it was not possible to determine its exact expression, but we can revisit the proof with all the information we know now about $M_n$ and be much more precise:

\begin{coro}\label{corosucGn}
For each $n>2$, we have an exact sequence of the form
$$0\lra\cH_{n-2}\lra\cG_{n-2}\lra\bigoplus_{\alpha\in C_{n-2}}\cK_{\alpha}\lra0.$$
\end{coro}
\begin{proof}
Recall the exact sequences of \ref{sucesionescoro} for the index $n-2$:
$$\begin{array}{c}
\displaystyle0\lra\cH_{n-2}\oplus\bigoplus_{\beta\in C'_{n-2}}\cK_\beta\lra\cG_{n-2}\lra \bigoplus_{\beta\in C''_{n-2}}\cK_\beta\lra0,\\[4pt]
\displaystyle0\lra\bigoplus_{\beta\in C'_{n-2}}\cK_\beta\lra\cG_{n-2}\lra \cH_{n-2}\oplus\bigoplus_{\beta\in C''_{n-2}}\cK_\beta\lra0.
\end{array}$$
Take $w_{n-1}=d_{n-2}$, so that every $\beta\in C_{n-2}$ appears as well as some $a/d_{n-1}$ for certain values of $a\in\{1,\ldots,d_{n-1}-1\}$. Then, if $C'_{n-2}$ were not empty, all of the
$\cH^{-1}\pi_{\gm,+}(\cK_\beta\otimes\cK_{-\beta})\otimes\cK_\beta\cong\cK_\beta$, for $\beta\in C'_{n-2}$, would be subobjects of $\cH^{-1}\pi_{\gm,+}(\cG_{n-2}\otimes\cK_{-\beta})\otimes\cK_\beta$, just by looking at the long exact sequence of cohomology of the triangle appearing from applying $\pi_{\gm,+}(\bullet\otimes\cK_{-\beta})\otimes\cK_\beta$ to one of the exact sequences of \ref{sucesionescoro}. However, $\cH^{-1}\pi_{\gm,+}(\cG_{n-2}\otimes\cK_{-\beta})\otimes\cK_\beta=0$, so necessarily $C'_{n-2}$ is empty and $C''_{n-2}$ is the whole $C_{n-2}$.

Now we have two possibilities: either the extension of the Kummer $\cK_\beta$ and the hypergeometric $\cH_{n-2}$ is trivial or it is not, corresponding respectively to the second and the first exact sequence of \ref{sucesionescoro}. Anyhow, we can write the exact sequence with $\cH_{n-2}$ on the left-hand side, independently of whether it splits or not.
\end{proof}

Even though the corollary provides us a piece of Theorem \ref{teorema2}, we will not be able to use it in the following, since it lets us find the exact sequence of $\cG_{n-2}$, two steps behind the induction procedure. We will only use it at the very end of the actual proof of the theorem in section \ref{fourier}.

We can also state an interesting small byproduct of Proposition \ref{homogeneo}, relating hyperplane arrangements:

\begin{coro}
For any $n\geq2$, let $\mathcal{A}=\{H_1,\ldots,H_{n+1}\}$ be a generic arrangement of $n+1$ hyperplanes in $\AA^n$ (i. e., a central arrangement such that the intersection of every subset of $n$ hyperplanes is the origin) with multiplicities $w_1,\ldots,w_{n+1}$, sharing no common factor and denoting by $d$ their sum. Then, the global de Rham cohomology of its Milnor fiber $F$ is
$$\cH^i\pi_{F,+}\cO_F\cong\kk^{\binom{n}{i+n-1}},\text{ for } -(n-1)\leq i\leq-1,\text{ and }\, \cH^0\pi_{F,+}\cO_F\cong\kk^{n+d-1}$$
\end{coro}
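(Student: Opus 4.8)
The plan is to recognise the Milnor fibre $F$ as a fibre of a morphism already treated in the paper --- the one whose direct image is the complex $M_{n+1}$ of Proposition \ref{homogeneo} --- and then to read off the de Rham cohomology of $F$ from the generic ranks of the cohomology sheaves of $M_{n+1}$. First I would normalise the arrangement: since $\mathcal{A}$ is generic and central, after a linear change of coordinates its hyperplanes are $\{x_1=0\},\ldots,\{x_n=0\}$ and $\{x_1+\ldots+x_n=0\}$, and, relabelling the multiplicities as $w_1,\ldots,w_n$ on the coordinate hyperplanes and $w_0$ on the last one (so $w_0+\ldots+w_n=d$ and $\gcd(w_0,\ldots,w_n)=1$, i.e.\ $e_n=1$ and $d=d_n$ in the notation of the paper), we may take as defining polynomial $Q = x_1^{w_1}\cdots x_n^{w_n}\bigl(-(x_1+\ldots+x_n)\bigr)^{w_0}$, so that $F = Q^{-1}(1)$. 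A direct computation shows that $Q$ has no critical point on the complement $T_{n+1}=\{\underline{x}\in\gm^n\mid x_1+\ldots+x_n\neq 0\}$ of $\mathcal{A}$: the vanishing of all $\partial_{x_i}Q$ would force $w_i/x_i=-w_0/(x_1+\ldots+x_n)$ for every $i$, hence $x_1+\ldots+x_n = -\tfrac{w_1+\ldots+w_n}{w_0}(x_1+\ldots+x_n)$, impossible since all $w_i>0$ and $\sum x_i\neq 0$ on $T_{n+1}$. Thus $Q\colon T_{n+1}\to\gm$ is a smooth morphism and $F$ is a smooth affine variety; moreover $Q$ is exactly the restriction $\lambda_{n+1}|_{T_{n+1}}$ occurring before Proposition \ref{homogeneo}, so $F=(\lambda_{n+1}|_{T_{n+1}})^{-1}(1)$ and $(\lambda_{n+1}|_{T_{n+1}})_+\OO_{T_{n+1}}=M_{n+1}$.

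Next I would pass from the direct image to the fibre. The square with vertices $F$, $T_{n+1}$, $\Spec\kk$, $\gm$ (all smooth, the right-hand vertical map $\lambda_{n+1}|_{T_{n+1}}$ and the bottom map the inclusion $\iota_1$ of the point $\lambda=1$) is cartesian, so the smooth base change theorem gives $\iota_1^+M_{n+1}\cong\pi_{F,+}\OO_F$. By Proposition \ref{homogeneo} the cohomology sheaves $\HH^i M_{n+1}$ are direct sums of Kummer $\DD$-modules, hence locally free $\OO_{\gm}$-modules; therefore $\iota_1^+M_{n+1}=M_{n+1}\otimes^{\LL}_{\OO_{\gm}}\kk$ has no higher Tor contributions, and $\dim_\kk\HH^i\pi_{F,+}\OO_F=\dim_\kk\HH^i(\iota_1^+M_{n+1})=\rk\HH^i M_{n+1}$ for every $i$. (Alternatively, and avoiding base change at a point, one may pull $\lambda_{n+1}|_{T_{n+1}}$ back along the $d$-th power map $\gm\to\gm$: the total space becomes $F\times\gm$ via the $\gm$-equivariant isomorphism $(\underline{y},t)\mapsto(t\underline{y},t)$, and the K\"unneth formula together with the exactness and rank-preservation of \'etale inverse image gives the same identity.)

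It remains to substitute $e_n=1$ and $d_n=d$ into Proposition \ref{homogeneo} applied with its index equal to $n+1$: $M_{n+1}$ is then concentrated in degrees $-(n-1)$ to $0$, with $\HH^i M_{n+1}\cong\OO_{\gm}^{\binom{n}{i+n-1}}$ for $-(n-1)\le i\le-1$ and $\HH^0 M_{n+1}\cong\OO_{\gm}^{\,n-1}\oplus\bigoplus_{a=1}^{d}\KK_{a/d}$, of generic rank $(n-1)+d=n+d-1$. Combining with the previous step yields $\HH^i\pi_{F,+}\OO_F\cong\kk^{\binom{n}{i+n-1}}$ for $-(n-1)\le i\le-1$ and $\HH^0\pi_{F,+}\OO_F\cong\kk^{\,n+d-1}$, the remaining cohomology vanishing (as also follows from $F$ being affine of dimension $n-1$). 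The genuine content is the observation that $F$ is a fibre of $\lambda_{n+1}|_{T_{n+1}}$; the only points that require care are then the smoothness of $Q|_{T_{n+1}}$ and the base-change identification $\iota_1^+M_{n+1}\cong\pi_{F,+}\OO_F$, and everything else is bookkeeping of indices between the statement of the corollary and that of Proposition \ref{homogeneo}.
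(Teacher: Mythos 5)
Your proposal is correct and follows essentially the same route as the paper: both identify the Milnor fibre $F$ with a fibre of $\lambda_{n+1}|_{T_{n+1}}$ and read the answer off Proposition \ref{homogeneo} via base change; indeed your parenthetical alternative (pulling back along $[d]$ to trivialise the family as $F\times\gm$ and applying K\"unneth) is exactly the paper's argument. The only cosmetic difference is that your main route restricts at the point $\lambda=1$ and uses the $\OO_{\gm}$-local freeness of the cohomologies of $M_{n+1}$ (sums of Kummer modules) to equate fibre dimensions with generic ranks, which is a perfectly valid shortcut consistent with the base-change statement used elsewhere in the paper.
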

\begin{proof}
Under a suitable affine change of variables, we can assume that the form defining our arrangement is $f=x_1^{w_1}\cdot\ldots\cdot x_n^{w_n}(x_1+\ldots+x_n)^{w_{n+1}}$. Then, since it is homogeneous, the Milnor fiber of the arrangement is defined by the equation $f=1$. Let us denote by $\cT$ the subvariety of $\AA^n\times\gm$ given by $f-\lambda=0$, and consider the following cartesian diagram:
$$\xymatrixcolsep{3pc}\xymatrix{\ar @{} [dr] |{\Box} F\times\gm \ar[d]_{\pi_{2}} \ar[r]^{\alpha} & \cT \ar[d]^{\pi_{2}}\\
 \gm \ar[r]_{[d]} & \gm},$$
where $\alpha$ is the morphism given by $(\xx,\lambda)\mapsto(x_1/\lambda,\ldots,x_{n-1}/\lambda,\lambda^d)$ and the morphisms denoted by $\pi_{2}$ are actually the restriction to $F\times\gm$ or $\cT$ of the second projection of $\AA^n\times\gm$. All the varieties involved are smooth, so by base change,
$$[d]^+\pi_{2,+}\cO_\cT\cong \pi_{2,+}\cO_{F\times\gm}\cong\pi_{F,+}\cO_F\otimes_{\kk}\cO_{\gm}.$$
Then the formula for the global de Rham cohomology of $F$ follows just from the statement of the proposition, noting that $M_{n+1}\cong\pi_{2,+}\cO_{\cT}$.
\end{proof}

\begin{lema}\label{Kummerfallo}
Let $n\geq2$, and let $\alpha$ be an element of $\kk$ such that $d_{n-1}\alpha$ is an integer. Then,
$$\pi_{2,+}(\pi_2\phi_n)^+\cK_\alpha\cong\left\{\begin{array}{ll}
\cO_{\gm}[1]\oplus\cO_{\gm}^2 & \text{ if }\alpha\in\ZZ\\
\cK_\alpha & \text{ otherwise.}\end{array}\right.$$
\end{lema}
\begin{proof}
For any $\alpha$, a simple calculation shows that
$$(\pi_2\phi_n)^+\cK_\alpha\cong\cD_{\gm\times(\gm-\{1\})}/ (D_\lambda-\alpha,\dd_z-(d_nz-w_n)(z(1-z))^{-1}D_{\lambda}).$$
Note that the ideal can be rewritten as $(D_\lambda-\alpha,\dd_z-\alpha(d_nz-w_n)(z(1-z))^{-1})$, so that our $\cD$-module is in fact $\cK_\alpha\boxtimes k^+\cH_1(-\alpha w_n;-\alpha d_n)$, where $k:\gm-\{1\}\hra\gm$ is the canonical inclusion.

But then, $\pi_{2,+}(\pi_2\phi_n)^+\cK_\alpha\cong\cK_{\alpha}\otimes\pi_{\gm-\{1\},+}k^+\cH_1(-\alpha w_n;-\alpha d_n)$. Note that $(d_n-w_n)\alpha=d_{n-1}\alpha\in\ZZ$. Then the question is now how to calculate the global de Rham cohomology of some $k^+\cH_1(\alpha;\beta)$, for $\alpha,\beta\in\kk$ such that $\alpha\equiv\beta\!\mod\ZZ$. However, due to the condition on $\alpha$ and $\beta$, $k^+\cH_1(\alpha;\beta)\cong k^+\cK_\alpha$.

If $\alpha\in\ZZ$, then $k^+\cK_\alpha$ is just $\cO_{\gm-\{1\}}$, so $\pi_{\gm-\{1\},+}k^+\cK_\alpha\cong\kk[1]\oplus\kk^2$. In order to treat the case $\alpha\notin\ZZ$, we will make use of \cite[Prop. 2.9.8]{KaESDE} and extend our $\cD$-modules to $\gm$, for $\pi_{\gm-\{1\},+}k^+\cK_\alpha\cong\pi_{\gm,+}k_+k^+\cK_\alpha$. In that case, $\cK_\alpha$ is an irreducible $\cD_{\gm}$-module and has an integer exponent at 1 because it is regular there, so we can apply the aforementioned result to obtain the exact sequence
$$0\lra\cK_\alpha\lra k_+k^+\cK_\alpha\lra\delta_1\lra0.$$
Now since $\pi_{\gm,+}\cK_\alpha=0$, we get from the sequence that $\pi_{\gm,+}k_+k^+\cK_\alpha\cong\kk$.
\end{proof}

\begin{prop}\label{constkn}
For each $n\geq1$, $K_n$ is concentrated in degrees $-(n-1),\ldots,0$, $\cH^iK_n\cong\bigoplus_{a=1}^{e_n}\cK_{a/e_n}^{\binom{n}{i+n-1}}$ for all $-(n-1)\leq i\leq-1$ and in degree zero, we have an exact sequence of the form
$$0\lra\cG_n\lra \cH^{0}K_n\lra\bigoplus_{a=1}^{e_n}\cK_{a/e_n}^n\lra0,$$
where $\cG_n$ is a $\cD_{\gm}$-module of Euler-Poincaré characteristic $-1$, without any constant composition factor.
\end{prop}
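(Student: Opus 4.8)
The plan is to argue by induction on $n$, in tandem with Propositions \ref{cociente} and \ref{homogeneo}; as in those statements the weight vector is allowed to be arbitrary with positive entries, so that the inductive hypothesis applies to the truncations $(w_0,\dots,w_{n-1})$, whose common divisor $e_{n-1}$ may exceed $1$. First one reduces to $e_n=1$: if $e_n=e>1$, the last discussion of Section \ref{problema} identifies $K_n$ with $\bigoplus_{\zeta\in\mu_e}h_{\zeta,+}K_{n,w/e}$, and since $\bigoplus_{\zeta\in\mu_e}h_{\zeta,+}\OO_{\gm}\cong\bigoplus_{a=1}^{e}\KK_{a/e}$, the statement for the primitive vector $w/e$ gives the one for $w$ after the corresponding Kummer twist. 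Assuming then $e_n=1$, the claim reads: $K_n$ lives in degrees $-(n-1),\dots,0$, $\HH^i(K_n)\cong\OO_{\gm}^{\binom{n}{i+n-1}}$ for $-(n-1)\le i\le-1$, and $0\lra\GG_n\lra\HH^0(K_n)\lra\OO_{\gm}^n\lra0$ with $\chi(\GG_n)=-1$ and $\GG_n$ without constant composition factors. The base case $n=1$ is essentially Lemma \ref{inicio}: $K_1$ is a single regular $\DD_{\gm}$-module of generic rank $d_1$, singular only at $\gamma_1$, whose composition factors are Kummer modules and one irreducible hypergeometric, the latter non-punctual because $\pi_{\gm,+}K_1\cong\pi_{\gm-\{1\},+}\OO_{\gm-\{1\}}$ forces $\chi(K_1)=-1$; Proposition \ref{cociente}(i) (with $n=1$ odd) gives a surjection $K_1\twoheadrightarrow\OO_{\gm}$, whose kernel is $\GG_1$, so $\chi(\GG_1)=-1$, the absence of a constant composition factor being the statement that $K_1$ has exactly one trivial composition factor (treated below).

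For the inductive step the starting point is the distinguished triangle $K_n\lra P_n\lra M_n$, with $P_n:=\pi_{2,+}(\pi_2\phi_n)^+K_{n-1}$ and $M_n$ as in Proposition \ref{homogeneo}. The middle term I would recast: since $\pi_2\phi_n=p\circ\phi_n$ with $p$ the projection $(z,\lambda)\mapsto\lambda$, we get $P_n=(p\circ\phi_n^{-1})_+p^+K_{n-1}$, and as $p^+K_{n-1}\cong\OO_{\gm-\{1\}}\boxtimes K_{n-1}$ while $p\circ\phi_n^{-1}$ is $(z,\mu)\mapsto\mu\,g(z)$ with $g(z)=z^{w_n}(1-z)^{d_{n-1}}$, the Künneth formula gives $P_n\cong m_+\big(g_+\OO_{\gm-\{1\}}\boxtimes K_{n-1}\big)$, $m$ being multiplication on $\gm$. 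Thus $P_n$ is the multiplicative convolution of $K_{n-1}$ with $g_+\OO_{\gm-\{1\}}$, and the latter is precisely the module ``$K_1$'' of the two-entry weight vector $(d_{n-1},w_n)$: by Lemma \ref{inicio} a single regular module of generic rank $d_{n-1}+w_n=d_n$ and Euler characteristic $-1$, with composition factors one hypergeometric and some Kummers. Feeding in the inductive description of $K_{n-1}$ (Kummers $\KK_{a/e_{n-1}}$ in the negative degrees, and $\GG_{n-1}$ extended by $\OO_{\gm}^{\,n-1}$ in degree zero) and unwinding the convolution into pieces of the shape $\pi_{\gm,+}(\bullet\otimes\KK_\alpha)$ — exactly the manipulations carried out in the proof of Proposition \ref{homogeneo}, governed by the same integrality conditions on $\alpha$ — one finds that $P_n$ is concentrated in degrees $-(n-1),\dots,0$, is constant in each negative degree, and in degree zero is an extension of a constant module by a module whose only non-constant part is the convolution of $\GG_{n-1}$ with the hypergeometric factor of $g_+\OO_{\gm-\{1\}}$.

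The conclusion is then extracted degree by degree from the long exact cohomology sequence of the triangle. Vanishing above degree zero reduces to $\HH^1(K_n)=\coker\big(\HH^0(P_n)\to\HH^0(M_n)\big)$, which vanishes: the constant part of $\HH^0(P_n)$ surjects onto the constant part of $\HH^0(M_n)$, and the non-constant $\KK_{a/d_{n-1}}$-part of $\HH^0(M_n)$ is hit by the $\GG_{n-1}$-convolution in $\HH^0(P_n)$; with the vanishing below $-(n-1)$ this is point (a). For $-(n-1)\le i\le-1$ each $\HH^i(K_n)$ sits in an exact sequence of modules of Euler characteristic zero, hence is a sum of Kummer modules by Proposition \ref{charcero}; since $e_n=1$ forces the $\KK_{a/e_{n-1}}$- and $\KK_{a/d_{n-1}}$-pieces of $P_n$ and $M_n$ to cancel against one another, only copies of $\OO_{\gm}$ survive, and with the rank bound of Proposition \ref{cociente}(ii) this pins down $\HH^i(K_n)\cong\OO_{\gm}^{\binom{n}{i+n-1}}$, giving (b). In degree zero Proposition \ref{cociente}(i) supplies a surjection $\HH^0(K_n)\twoheadrightarrow\OO_{\gm}^n$; let $\GG_n$ be its kernel, which is the exact sequence of (c). Finally, additivity of $\chi$ along the triangle — with $\chi(M_n)=0$ and, from the Künneth identity $\pi_{\gm,+}m_+(A\boxtimes B)\cong\pi_{\gm,+}A\otimes_\kk\pi_{\gm,+}B$, $\chi(P_n)=-\chi(g_+\OO_{\gm-\{1\}})\,\chi(K_{n-1})=-(-1)(-1)=-1$ — yields $\chi(K_n)=-1$, hence $\chi(\GG_n)=-1$, which is (d).

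The remaining point (e), that $\GG_n$ has no constant composition factor, is the crux and the step I expect to be hardest. Since $\HH^0(K_n)/\GG_n\cong\OO_{\gm}^n$ it is equivalent to the assertion that $\HH^0(K_n)$ has exactly $n$ trivial composition factors — which also upgrades the alternative $r\in\{n,n+1\}$ of Proposition \ref{cociente}(i) to $r=n$. The argument I would give counts these factors by combining the inductive control on the trivial composition factors of $P_n$ (each coming from convolving a trivial factor of $K_{n-1}$ with $g_+\OO_{\gm-\{1\}}$, none from the hypergeometric convolutions) with the contribution of $M_n$ along the long exact sequence, and then fixes the count by evaluating $\dim\Hom_{\DD_{\gm}}(\HH^0(K_n),\OO_{\gm})$ and $\dim\Hom_{\DD_{\gm}}(\OO_{\gm},\HH^0(K_n))$ via the adjunctions for $\lambda_n$ and the global de Rham cohomology of its boundary fibres, compatibly with the inductive hypothesis; for $n=1$ this already gives both dimensions equal to $\dim\HH^0\pi_{\gm-\{1\},+}\OO_{\gm-\{1\}}=1$. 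Keeping this bookkeeping exact, rather than merely up to $\pm1$ as in Proposition \ref{cociente}(ii), is the delicate part; everything else is the combinatorics of Kummer modules under convolution and the long exact sequence.
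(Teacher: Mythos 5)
Your induction follows the same skeleton as the paper (the triangle $K_n\to\pi_{2,+}(\pi_2\phi_n)^+K_{n-1}\to M_n$ together with propositions \ref{cociente}, \ref{homogeneo} and \ref{charcero}), but the decisive step is missing: nothing in your argument rules out an extra \emph{nontrivial} Kummer composition factor in $\HH^{-1}(K_n)$, coming from $\HH^{-1}\pi_{2,+}(\pi_2\phi_n)^+\GG_{n-1}$. Your two quantitative tools cannot detect it: the bound of proposition \ref{cociente}(ii) at $i=-1$ only gives $\rk\HH^{-1}K_n\le\binom{n}{n-2}+1$, which leaves room for exactly one rank-one nonconstant factor, and any bookkeeping via global de Rham cohomology or $\Hom_{\DD_{\gm}}(\OO_{\gm},\bullet)$, $\Hom_{\DD_{\gm}}(\bullet,\OO_{\gm})$ is blind to a nontrivial $\KK_\alpha$, whose global de Rham cohomology vanishes. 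Your assertion that ``$e_n=1$ forces the $\KK_{a/e_{n-1}}$- and $\KK_{a/d_{n-1}}$-pieces of $P_n$ and $M_n$ to cancel'' is not a proof, because $\GG_{n-1}$ contains the hypergeometric factor $\FF_{n-1}$, and its convolution could a priori contribute a Kummer $\KK_\alpha$ with $w_n\alpha,d_{n-1}\alpha\in\Z$ even when $e_n=1$. The paper spends roughly half of its proof on precisely this point: an explicit computation with the operators $P_0,P_1$ showing that $\HH^{-1}\pi_{2,+}(\pi_2\phi_n)^+\FF_{n-1}$ can only be an extension of Kummer modules $\KK_\alpha$ with $w_n\alpha\in\Z$, followed by the symmetry argument — the module $G_{-1}$ is intrinsic to $K_n$ while the choice of the last variable in the induction is not, so permuting the $w_i$ forces $w_i\alpha\in\Z$ for all $i$, hence $e_n\alpha\in\Z$, i.e.\ $\alpha\in\Z$, contradicting nontriviality. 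That idea (or a substitute for it) is absent from your proposal.

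There are secondary gaps as well. The point you yourself flag as the crux (exactly $n$ constant composition factors in degree zero, none in $\GG_n$) is only sketched; the paper settles it by equating the global de Rham cohomology of $K_n$ with that of $Z_n$ (a generic arrangement complement, via \cite{CaMV}), solving the resulting system for the constant multiplicities $c_i$, and then feeding in proposition \ref{cociente}(i) to force $c_0=n$; your proposed Hom-counting is essentially this and is fine for the constant factors, but, as above, it cannot replace the $G_{-1}=0$ step. Two smaller slips: $\HH^{i}(K_n)=0$ for $i>0$ should not be derived from an unproved surjectivity of $\HH^0P_n\to\HH^0M_n$ — it is immediate from writing $\lambda_n$ as the graph embedding followed by an affine projection of relative dimension $n$, and in the paper the surjectivity is a \emph{consequence} of this vanishing, not the other way round; and to obtain $\HH^iK_n\cong\OO_{\gm}^{\binom{n}{i+n-1}}$ as a direct sum one must know that extensions of constants inside $\HH^iK_n$ split, which the paper deduces from theorem \ref{teorema1} (no singularity of $\bar{K}_n$ at the origin) and $\Ext^1_{\DD_{\AA^1}}(\OO_{\AA^1},\OO_{\AA^1})=0$. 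Finally, when $e_{n-1}>1$ the negative-degree cohomologies of $P_n$ are not constant but contain $\KK_{a/e_{n-1}}$ summands, a misstatement you only partially correct later.
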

\begin{proof}
Acting in the same way as in the proof of Proposition \ref{homogeneo}, let us assume that $e_n=1$; the general case can be easily deduced from this particular one. The statement is an improvement of that of Proposition \ref{cociente}. To prove the part regarding the zeroth cohomology we only need to show that $\cH^{0}K_n$ has only $n$ copies of $\cO_{\gm}$ as composition factors, appearing then all of them as a quotient, defining the $\cG_n$ as the $\cD$-module with which we take such a quotient.

To deal in a better way with the cohomologies of $K_n$, let us separate the copies of $\cO_{\gm}$ from the semisimplification of the rest of composition factors, such that $\left(\cH^iK_n\right)^{\text{ss}}=\cO^{c_i}\oplus G_i$, that is to say, we will denote by $G_i$ the direct sum of the nonconstant composition factors of $\cH^iK_n$.

It is straightforward to deduce that $\cH^iK_n=0$ for every $i\notin\{-n,\ldots,0\}$ decomposing $\lambda_n$ as the graph immersion followed by the second projection. Following the spirit of the section, let us proceed inductively in $n$ to find the $c_i$ and the $G_i$. If $n=1$ we have already proved everything at Lemma \ref{inicio} and Proposition \ref{cociente}, so let us go for the general case assuming that we know $K_{n-1}$. Remember that we have the distinguished triangle \ref{trianguloclave}
$$K_n\longrightarrow \pi_{2,+}(\pi_2\phi_n)^+K_{n-1} \longrightarrow M_n\lra.$$
Let us calculate its long exact sequence of cohomology. Using that $(\pi_2\phi)^+$ is an exact functor of $\cD$-modules and Lemma \ref{descdirec}, we get an exact sequence for any $i$ of the form
\begin{equation}\label{descKn}
0\ra\cH^0\pi_{2,+}(\pi_2\phi_n)^+\cH^i K_{n-1} \ra\cH^i\pi_{2,+}(\pi_2\phi_n)^+K_{n-1}\ra \cH^{-1}\pi_{2,+} (\pi_2\phi_n)^+\cH^{i+1}K_{n-1}\ra0.
\end{equation}
If any of the cohomologies of $\cH^iK_n$ is formed, among others, by an extension of $\cO_{\gm}^a$ by $\cO_{\gm}^b$ for certain $a$ and $b$, it will necessarily be trivial (although that does not happen in general; consider for instance the extension $0\ra\cO_{\gm}\ra\cD_{\gm}/(D^2)\ra\cO_{\gm}\ra0$). Indeed, by Theorem \ref{teorema1}, the cohomologies $\cH^ij^+\bar{K}_n$ do not have any singularity at the origin. Now take into account that those cohomologies are nothing but the image by $\iota_n^+$ of those of $K_n$, so any of the extensions of $\cO_{\gm}^a$ that we could have may be extended to the analogous over an affine line and then must be trivial, for $\Ext_{\cD_{\AA^1}}^1(\cO_{\AA^1},\cO_{\AA^1})=0$.

Take $i\in\{-(n-2),\ldots,-1\}$. We know that $\cH^iK_{n-1}\cong\bigoplus_{a=1}^{e_{n-1}}\cK_{a/e_{n-1}}^{\binom{n-1}{i+n-2}}$. By Lemma \ref{Kummerfallo} above,
\begin{equation}\label{kummerKn}
\pi_{2,+}(\pi_2\phi_n)^+\bigoplus_{a=1}^{e_{n-1}}\cK_{a/e_{n-1}}\cong\cO_{\gm}[1]\oplus\cO_{\gm} \oplus\bigoplus_{a=1}^{e_{n-1}}\cK_{a/e_{n-1}}.
\end{equation}

Therefore, we can claim that the exact sequences of the form \ref{descKn} will split for any $i\neq0,-1$. Using that splitting and formulas \ref{kummerKn}, we can be more concrete when writing the long exact sequence of triangle \ref{trianguloclave}, namely
\begin{equation}\label{suclarga}
\begin{array}{c}
\displaystyle 0\ra\cH^{-n}K_n\ra0\ra0\ra\cH^{-(n-1)}K_n\ra\cO_{\gm}\ra0\ra\\
\displaystyle \vdots\\
\displaystyle \ra\cH^iK_n\ra\cO_{\gm}^{\binom{n}{i+n-1}}\oplus \bigoplus_{a=1}^{e_{n-1}}\cK_{a/e_{n-1}}^{\binom{n-1}{i+n-2}}\ra \bigoplus_{a=1}^{e_{n-1}}\cK_{a/e_{n-1}}^{\binom{n-1}{i+n-2}}\ra\\
\displaystyle \vdots\\
\displaystyle \ra\cH^{-1}K_n\ra\cH^{-1}\pi_{2,+}(\pi_2\phi_n)^+K_{n-1}\ra \bigoplus_{a=1}^{e_{n-1}}\cK_{a/e_{n-1}}^{\binom{n-1}{n-3}}\ra\\
\displaystyle\ra\cH^0K_n\ra\cH^0\pi_{2,+}(\pi_2\phi_n)^+K_{n-1} \ra\bigoplus_{a=1}^{e_{n-1}}\cK_{a/e_{n-1}}^{n-2} \oplus\bigoplus_{a=1}^{d_{n-1}}\cK_{a/d_{n-1}}\ra0.
\end{array}
\end{equation}

Then $\cH^{-n}K_n=0$, $\cH^{-(n-1)}K_n\cong\cO_{\gm}$ and the $G_i$ will be a sum of Kummer $\cD$-modules for $i\leq-2$. Denote by $h_i^j$ the dimension of the $j$-th global de Rham cohomology of $G_i$; since
$$h_i^{-1}=\dim\Gamma\left(\gm,\bR\cH om_{\cD_{\gm}}\left(\cO_{\gm},G_i\right)\right),$$
all of the $h_i^{-1}$ must vanish by definition of the $G_i$, and the $h_i^0$ will vanish too for $i\leq-2$ because those $G_i$ are sums of Kummer $\cD$-modules. Notice now that the global de Rham cohomologies of $K_n$ and $\cO_{Z_n}$ are the same. The latter is already known thanks to \cite[Prop. 5.2]{CaMV} because $Z_n$ is the complement of an arrangement of hyperplanes in general position, so we will have the following system of equations:
$$\left\{\begin{array}{l}
c_i+c_{i+1}=\binom{n+1}{i+n}, \,i=-n,\ldots,-2\\
h_{-1}^0+c_{-1}+c_0=\binom{n+1}{n-1}\\
h_0^0+c_0=n+1\end{array},\right.$$
From these equations we get that for $i=-(n-2),\ldots,-1$, $c_{-i}=\binom{n}{i+n-1}$, so that our system can be reduced to:
$$\left\{\begin{array}{l}
h_{-1}^0+c_0=n\\
h_0^0+c_0=n+1\end{array}.\right.$$
Nevertheless, the first point of Proposition \ref{cociente} tells us that $c_0\geq n$, so the equality holds indeed and then $h_{-1}^0=0$ and $h_0^0=1$. Following an analogous argument, from the calculation of the $c_i$ we deduce that $\operatorname{rk}\cH^iK_n\geq\binom{n}{i+n-1}$ at every degree; applying now the second point of Proposition \ref{cociente} the equality holds again for $i\leq-2$ and so $G_i=0$ for those values of $i$. Then we can also claim that every row of the long exact sequence \ref{suclarga} but the last two is a single short exact sequence in itself, all of them with the zero module at the beginning and the end.

This proves everything (so that $\cG_n^{\text{ss}}=G_0$) except the vanishing of $G_{-1}$. We know that its rank is one at most, thanks to point ii of Proposition \ref{cociente}. Then it will be a Kummer $\cD$-module, for $h_{-1}^{-1}=h_{-1}^0=0$. Let us focus first in knowing better $\cH^{-1}\pi_{2,+}(\pi_2\phi_n)^+\cG_{n-1}$.

From the exact sequence
$$0\lra\cG_{n-1}\lra \cH^{0}K_{n-1}\lra\bigoplus_{a=1}^{e_{n-1}}\cK_{a/e_{n-1}}^{n-1}\lra0$$
and diagram \ref{descKn} we can deduce that its nonconstant composition factors, together with some Kummer $\cD$-modules, are composition factors of $\cH^{-1}\pi_{2,+}(\pi_2\phi_n)^+K_{n-1}$, which finds itself in an exact sequence between $\cH^{-1}K_n$ and a sum of Kummer modules. Therefore, $\cH^{-1}\pi_{2,+}(\pi_2\phi_n)^+\cG_{n-1}$ is also an extension of some Kummer $\cD$-modules. By applying the functor $\pi_{2,+}(\pi_2\phi_n)^+$ to any of the exact sequences from \ref{sucesionescoro}, we can claim that it seats in an exact sequence between $\cH^{-1}\pi_{2,+}(\pi_2\phi_n)^+\cH_{n-1}\oplus\bigoplus_{\alpha\in C'_{n-1}}\cH^{-1}\pi_{2,+}(\pi_2\phi_n)^+\cK_\alpha$ and $\bigoplus_{\alpha\in C''_{n-1}}\cH^{-1}\pi_{2,+}(\pi_2\phi_n)^+\cK_\alpha$, exchanging $C'_{n-1}$ and $C''_{n-1}$ if necessary. Let us study then those other objects, and start with the hypergeometric $\cD$-module $\cH_{n-1}$.

Write $\cH_{n-1}=\cH_\gamma(\alpha_i;\beta_j)$ for certain $\gamma,\alpha_i,\beta_j$ and $i,j=1,\ldots,r$ (we do not need their concrete expression now) and call $g:=z(1-z)$. Then, we want to know the kernel of $\dd_z$ (acting by left multiplication) at $\cD_{\gm}\left[z,g^{-1}\right]\langle\dd_z\rangle/(P_0,P_1)$, where $$P_0=\dd_z-\frac{d_nz-w_n}{z(1-z)}D_{\lambda} \text{ and } P_1=\gamma z^{w_n}(1-z)^{d_{n-1}} \prod_{i=1}^r(D_{\lambda}-\alpha_i) -\lambda\prod_{j=1}^r(D_{\lambda}-\beta_j).$$

Dividing by $P_0$, every element of $\cD_{\gm}\left[z,g^{-1}\right]\langle\dd_z\rangle$ can be written as $a=\sum_{i=0}^Na_iD_{\lambda}^i$, with every coefficient $a_i\in\cO_{\gm}\left[z,g^{-1}\right]$. Moreover, we can take $N=r-1$. If $N\geq r$, we have that
$$\dd_z\cdot a=\sum_{i=0}^N\dd_z(a_i)D_{\lambda}^i+\sum_{i=0}^N(d_nz-w_n) g^{-1}a_iD_{\lambda}^{i+1},$$
so taking the symbols with respect to $D_{\lambda}$,
$$(d_nz-w_n)g^{-1}a_N=x\left(\gamma z^{w_n}(1-z)^{d_{n-1}}-\lambda\right)$$
for some $x$, and so,
$$a_N=y\left(\gamma z^{w_n}(1-z)^{d_{n-1}}-\lambda\right)$$
for some $y$. Therefore, $a=a'+yD_{\lambda}^{N-r}P_1$, where $\deg_{D_{\lambda}}a'<N$ and such that $\dd_za$ belongs to the ideal $(P_0,P_1)$ if and only if $\dd_za'$ does.

Take then $a=\sum_{i=0}^{r-1}a_iD_{\lambda}^i$, and let us assume that $\dd_za\in(P_0,P_1)$. As above, there will exist some $x$ and $y$ belonging to $\cO_{\gm}\left[z,g^{-1}\right]$ such that
$$(d_nz-w_n)g^{-1}a_{r-1}=x\left(\gamma z^{w_n}(1-z)^{d_{n-1}}- \lambda\right)\,\text{ and } a_{r-1}=y\left(\gamma z^{w_n}(1-z)^{d_{n-1}}- \lambda\right).$$
Since $\deg_{D_{\lambda}}\dd_za=r$, we necessarily have that $\dd_za=xP_1$. But $\cH^{-1}\pi_{2,+}(\pi_2\phi_n)^+\cH_{n-1}$ is an extension of Kummer $\cD$-modules: either it is a subobject of $\cH^{-1}\pi_{2,+}(\pi_2\phi_n)^+\cG_{n-1}$ or it lies in between it and some Kummer $\cD$-modules according to the two possibilities in \ref{sucesionescoro}, so there must exist some $\alpha$ such that $(D_{\lambda}-\alpha)a=y'P_1$.

Now $\dd_za$ and $(D_{\lambda}-\alpha)(d_nz-w_n)g^{-1}a$ share their leading term in $D_{\lambda}$, so
$$\left(\dd_z-(D_{\lambda}-\alpha)(d_nz-w_n)g^{-1}\right)a$$
will vanish. Therefore, for every $i=0,\ldots,r-1$ we will have that
$$\dd_z(a_i)-(d_nz-w_n)g^{-1}(D_{\lambda}-\alpha)(a_i)=0.$$
Writing locally each $a_i$ as $\sum_{j\in\ZZ}a_{ij}\lambda^j$, with each $a_{ij}$ belonging to $\kk\left[z,g^{-1}\right]$, we must have that, for any $i$ and $j$,
$$\dd_z(a_{ij})=(d_nz-w_n)g^{-1}(j-\alpha)a_{ij}.$$
This differential equation has as formal solution space the $\kk$-span of $\left(z^{w_n}(1-z)^{d_{n-1}}\right)^{\alpha-j}$, which is algebraic only if $w_n\alpha,d_{n-1}\alpha\in\ZZ$. In conclusion, $\cH^{-1}\pi_{2,+}(\pi_2\phi_n)^+\cH_{n-1}$ is an extension of certain $\cK_\alpha$ for $w_n\alpha$ and $d_{n-1}\alpha$ being an integer.

Let us turn our attention now to the Kummer side of $\cG_{n-1}$. Thanks to Lemma \ref{Kummerfallo}, the $\cD$-module $\cH^{-1}\pi_{2,+}(\pi_2\phi_n)^+\cK_\alpha$ does not vanish if and only if $w_n\alpha,d_{n-1}\alpha\in\ZZ$. In conclusion, gathering the information of the last paragraphs, we know that $\cH^{-1}\pi_{2,+}(\pi_2\phi_n)^+\cG_{n-1}$ must be an extension of Kummer $\cD$-modules $\cK_\alpha$ for some $\alpha$ such that $w_n\alpha$ and $d_{n-1}\alpha$ are always integer numbers.

Point iii of Proposition \ref{cociente} tells us that $G_{-1}$, being a Kummer $\cD$-module $\cK_\alpha$, must verify that $d_n\alpha$ is an integer. From the second row from the last of the long exact sequence \ref{suclarga} we can affirm that it is also a subobject of either $\cH^{-1}\pi_{2,+}(\pi_2\phi_n)^+\cG_{n-1}$ or $\bigoplus_{a=1}^{e_{n-1}}\cK_{a/e_{n-1}}^{\binom{n-1}{n-3}}$. In any case we would have that $d_{n-1}\alpha\in\ZZ$, so $w_n\alpha$ should be an integer as well.

However, $G_{-1}$ is the nonconstant composition factor of $\cH^{-1}K_n$ (if there is any), and we are imposing a condition on $\alpha$ that depends heavily on the order we have followed to perform the induction, whereas $K_n$ does not change when changing the variables and thus the inductive steps. Renaming the variables so that we exchange $w_n$ and any other $w_i$ and following the same inductive procedure, we obtain that $G_{-1}$ must actually be a nontrivial Kummer $\cD$-module $\cK_\alpha$ such that $w_i\alpha\in\ZZ$ for every $i=0,\ldots,n$. But if such a thing happens, then $e_n\alpha$ should also be an integer, and by assumption $e_n=1$, so in the end $G_{-1}=0$ and we have finished the proof.
\end{proof}

\begin{nota}\label{sucesionH0}
By applying formula \ref{descKn} and looking at the second row from the last of the long exact sequence \ref{suclarga} we can also deduce from the proof of the proposition that $\cH^{-1}\pi_{2,+}(\pi_2\phi_n)^+\cG_{n-1}=0$ and $\cH^{-1}\pi_{2,+}(\pi_2\phi_n)^+\cH^0K_{n-1}\cong\cO_{\gm}^{n-1}$. Consequently, we have as well an exact sequence which will be very useful in the following result, formed by the last row of the long exact sequence \ref{suclarga}:
$$0\lra\cH^0K_n\lra\cH^0\pi_{2,+}(\pi_2\phi_n)^+K_{n-1}\lra\bigoplus_{a=1}^{e_{n-1}}\cK_{a/e_{n-1}}^{n-2} \oplus\bigoplus_{a=1}^{d_{n-1}}\cK_{a/d_{n-1}}\lra0.$$
\end{nota}

What we still have to prove is the rest of the properties of $\cG_n$ that are of interest to us: its generic rank, its singular points and its exponents at the origin and infinity. The first two can still be obtained within the inductive context.

\begin{prop}\label{rksing}
For every $n\geq1$, the generic rank of $\cG_n$ is $d_n-e_n$, and it has a unique singularity at $\gm$, namely at $\gamma_n$.
\end{prop}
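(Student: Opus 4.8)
The plan is an induction on $n$, carried out in the same spirit as the preceding propositions of this section, using the triangle \ref{trianguloKn}, the exact sequences of proposition \ref{constkn} and of the remark following corollary \ref{corosucGn}, and the vanishing $\HH^{-1}\pi_{2,+}(\pi_2\phi_n)^+\GG_{n-1}=0$ established inside the proof of proposition \ref{constkn}. As always I assume $e_n=1$, the general case being a formal consequence. First I note that $\GG_n$ is regular holonomic: it is a submodule of $\HH^0K_n$, and $K_n$ is regular holonomic since it is the direct image of $\OO_{Z_n}$. For $n=1$ everything is contained in lemma \ref{inicio}: $K_1=\HH^0K_1$ has generic rank $d_1$ and a single singularity in $\gm$, at $\gamma_1$; since proposition \ref{constkn} gives $0\to\GG_1\to\HH^0K_1\to\OO_\gm\to0$ we get $\rk\GG_1=d_1-1$, and $\GG_1\subseteq K_1$ is singular at most at $\gamma_1$; it is singular there because a regular connection on the whole of $\gm$ has Euler-Poincar\'e characteristic $0$, while $\chi(\GG_1)=-1$.

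For the generic rank in the inductive step I would exploit the exact sequence $0\to\HH^0K_n\to\HH^0\pi_{2,+}(\pi_2\phi_n)^+K_{n-1}\to\bigoplus_{a=1}^{d_{n-1}}\KK_{a/d_{n-1}}\oplus\OO_\gm^{n-2}\to0$ of the remark after corollary \ref{corosucGn}, so that $\rk\HH^0K_n=\rk\HH^0\pi_{2,+}(\pi_2\phi_n)^+K_{n-1}-(d_{n-1}+n-2)$. By formula \ref{descKn} and $\HH^1K_{n-1}=0$ one has $\HH^0\pi_{2,+}(\pi_2\phi_n)^+K_{n-1}\cong\HH^0\pi_{2,+}(\pi_2\phi_n)^+\HH^0K_{n-1}$, and I would compute its generic rank from the inductive description of $\HH^0K_{n-1}$: it is an extension of $\OO_\gm^{n-1}$ by $\GG_{n-1}$, hence regular of generic rank $d_{n-1}+n-2$, singular in $\gm$ only at $\gamma_{n-1}$, where its drop is $1$ (forced by $\chi=-1$ through the Euler-Poincar\'e formula on $\gm$, the constant summand contributing no drop). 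Its pullback to a general fibre of $\pi_2$ — which is $\gm-\{1\}=\PP^1-\{0,1,\infty\}$, via the degree-$d_n$ map $z\mapsto\lambda_0 z^{-w_n}(1-z)^{-d_{n-1}}$, \'etale over a general point — is then regular of generic rank $d_{n-1}+n-2$, with exactly $d_n$ singular points of drop $1$ in the fibre and nothing new at the three punctures, so the Euler-Poincar\'e formula gives de Rham characteristic $-(d_{n-1}+n-2)-d_n$. Using in addition $\HH^{-1}\pi_{2,+}(\pi_2\phi_n)^+\HH^0K_{n-1}\cong\OO_\gm^{n-1}$ (the remark after corollary \ref{corosucGn}) to know the rank of the fibrewise $H^0$, this yields $\rk\HH^0\pi_{2,+}(\pi_2\phi_n)^+K_{n-1}=d_n+d_{n-1}+2n-3$, hence $\rk\HH^0K_n=d_n+n-1$, and finally $\rk\GG_n=\rk\HH^0K_n-n=d_n-1$ by the sequence $0\to\GG_n\to\HH^0K_n\to\OO_\gm^n\to0$ of proposition \ref{constkn}.

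For the singular locus, $\GG_n$ is a $\DD_\gm$-submodule of $\HH^0K_n$, hence of $\HH^0\pi_{2,+}(\pi_2\phi_n)^+K_{n-1}$, so by the inclusion of characteristic varieties it suffices to show every cohomology of $\pi_{2,+}(\pi_2\phi_n)^+K_{n-1}$ is a connection on $\gm-\{\gamma_n\}$. For $e_{n-1}=1$ the lower cohomologies of $K_{n-1}$ are trivial and the constant summands are singularity-free in $\gm$, so on $(\gm-\{1\})\times\gm$ the only singularities of $(\pi_2\phi_n)^+K_{n-1}$ lie along $C=\{\lambda=\gamma_{n-1}z^{w_n}(1-z)^{d_{n-1}}\}$, the preimage of $\gamma_{n-1}$ through $\pi_2\phi_n$ (note that $\pi_2\phi_n$ never takes the values $0,\infty$ on $(\gm-\{1\})\times\gm$). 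The projection $C\to\gm$ is finite of degree $d_n$ and \'etale away from the unique critical value of $z\mapsto z^{w_n}(1-z)^{d_{n-1}}$, attained at $z=w_n/d_n$, which by the formulas $\gamma_{n-1}=d_{n-1}^{-d_{n-1}}\prod_{i<n}w_i^{w_i}$ and $\gamma_n=d_n^{-d_n}\prod_{i\le n}w_i^{w_i}$ equals exactly $\gamma_n$; moreover the local type of $(\pi_2\phi_n)^+K_{n-1}$ near the three punctures $z\in\{0,1,\infty\}$ of the fibre of $\pi_2$ is independent of the point of $\gm$ below. Hence over $\gm-\{\gamma_n\}$ the family of $\DD$-modules being pushed forward is locally trivial, its direct image has locally constant cohomology, and the only possible singularity of $\GG_n$ in $\gm$ is $\gamma_n$. (An alternative is to descend from $\XXn$: $\iota_n^+K_n=j^+\bar{K}_n$, the nonconstant part of $\bar{K}_n$ agrees with a direct summand of $p_{n,+}\OO_{\XXn}$ by theorem \ref{teorema1}, and the latter is a complex of connections over $U_n$ since $p_n$ is proper and smooth there; transporting singular loci through the \'etale covering $\iota_n$ again confines that of $\GG_n$ to $\gamma_n$.) That $\GG_n$ really is singular at $\gamma_n$ follows as in the base case: being regular holonomic of Euler-Poincar\'e characteristic $-1$, it cannot be a connection on all of $\gm$.

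The step I expect to be the real obstacle is making the local-triviality claim in the third paragraph precise — that pushing forward along the noncompact fibre $\gm-\{1\}$ produces locally constant cohomology over $\gm-\{\gamma_n\}$ exactly because the $d_n$ moving singular points neither collide nor reach the punctures — together with the careful bookkeeping of the boundary terms in the Euler-Poincar\'e computation used for the rank; both are routine but must be handled with some care.
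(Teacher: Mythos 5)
Your rank computation is essentially the paper's. The same two exact sequences reduce the problem to the generic rank of $\HH^0\pi_{2,+}(\pi_2\phi_n)^+\HH^0K_{n-1}$, and you compute that rank exactly as the paper does: base change to a generic fibre of $\pi_2$ and Katz's Euler--Poincar\'e formula for the degree-$d_n$ pullback, with drop $1$ at each of the $d_n$ preimages of $\gamma_{n-1}$; the only difference is that you pull back $\HH^0K_{n-1}$ wholesale and correct by $\HH^{-1}\pi_{2,+}(\pi_2\phi_n)^+\HH^0K_{n-1}\cong\OO_{\gm}^{n-1}$, whereas the paper first splits off $\GG_{n-1}$, and the totals agree. (Like the paper at this step, you tacitly take $e_{n-1}=1$, which does not follow from $e_n=1$; both texts resolve this only through the sub-tuple reduction, so it is not a defect specific to your write-up.) The genuine divergence is in locating the singularity. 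The paper never proves smoothness over $\gm-\{\gamma_n\}$ directly: it repeats the fibrewise Euler--Poincar\'e count at $\lambda=\gamma_n$, where the covering $C\to\gm$ has only $d_n-1$ points, obtains a fibre dimension smaller than the generic rank, and concludes by contradiction that the singular point must be $\gamma_n$, staying entirely inside the one-dimensional bookkeeping already set up. Your primary route needs exactly the local-constancy statement you flag (no vertical covectors over $\gm-\{\gamma_n\}$ after compactifying the fibres, using the product structure of $(\pi_2\phi_n)^+K_{n-1}$ near $z=0,1,\infty$); it is true and provable, but as written it is an assertion. Your parenthetical alternative, however, is complete and rigorous: $\iota_n^+K_n=j^+\bar{K}_n$, the nonconstant part of $\bar{K}_n$ is a direct summand of $p_{n,+}\OO_{\XXn}$ by theorem \ref{teorema1} and remark \ref{semisimple}, that complex is smooth over $U_n$ because $p_n$ is proper and smooth there, and $U_n\cap\gm=\iota_n^{-1}(\gm-\{\gamma_n\})$ with $\iota_n$ \'etale, so $\HH^0K_n$, hence its submodule $\GG_n$, is a connection away from $\gamma_n$; combined with $\chi(\GG_n)=-1$, which rules out a connection on all of $\gm$, this settles the claim. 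That descent argument is shorter than both your main route and the paper's contradiction argument, and it is the same mechanism the paper itself uses at the origin in the first part of the proof of theorem \ref{teorema2}.
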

\begin{proof}
We already know that $\cG_n$ is a regular holonomic $\cD_{\gm}$-module, and its Euler-Poincaré characteristic is $-1$, so by Proposition \ref{hyperLS} it will have a singularity at some point $\lambda_0$. Its restriction to the rest of $\gm$ will then be a module with integrable connection of some rank to be determined.

Since we know by Lemma \ref{inicio} that the statement of the proposition is true for $n=1$, let us prove it for a general $n$ by induction, and so let us assume its veracity for lower values of the index $n$. Assume as well that $e_n=1$; as before the general case can be easily deduced from this one.

As we discussed in Remark \ref{sucesionH0}, we had an exact sequence of the form
$$0\lra\cH^0K_n\lra\cH^0\pi_{2,+}(\pi_2\phi_n)^+K_{n-1}\lra\bigoplus_{a=1}^{e_{n-1}}\cK_{a/e_{n-1}}^{n-2} \oplus\bigoplus_{a=1}^{d_{n-1}}\cK_{a/d_{n-1}}\lra0.$$
Consider the long exact sequence of cohomology of the triangle
$$\pi_{2,+}(\pi_2\phi_n)^+\cG_{n-1}\lra\pi_{2,+}(\pi_2\phi_n)^+\cH^0K_{n-1}\lra \pi_{2,+}(\pi_2\phi_n)^+\bigoplus_{a=1}^{e_{n-1}}\cK_{a/e_{n-1}}^{n-1},$$
and focus in degree $-1$. We know that $\cH^{-1}\pi_{2,+}(\pi_2\phi_n)^+\cG_{n-1}$ vanishes and $\cH^{-1}\pi_{2,+}(\pi_2\phi_n)^+\cH^0K_{n-1}\cong\cO_{\gm}^{n-1}$ and, by formula \ref{kummerKn}, that
$$\cH^0\pi_{2,+}(\pi_2\phi_n)^+\bigoplus_{a=1}^{e_{n-1}}\cK_{a/e_{n-1}}^{n-1} \cong\cO_{\gm}^{n-1}[1]\oplus\cO_{\gm}^{n-1}\oplus\bigoplus_{a=1}^{e_{n-1}}\cK_{a/e_{n-1}}^{n-1}.$$
Therefore, we have in fact the exact sequence
$$0\lra\pi_{2,+}(\pi_2\phi_n)^+\cG_{n-1}\lra\cH^0\pi_{2,+}(\pi_2\phi_n)^+\cH^0K_{n-1}\lra \cO_{\gm}^{n-1}\oplus\bigoplus_{a=1}^{e_{n-1}}\cK_{a/e_{n-1}}^{n-1}\lra0.$$
Summing up, the generic rank of $\cG_n$ is equal to $-d_{n-1}+e_{n-1}-1$ plus the generic rank of $\pi_{2,+}(\pi_2\phi_n)^+\cG_{n-1}$; let us find that one.

Consider $(\pi_2\phi_n)^+\cG_{n-1}$. It is a regular holonomic $\cD$-module over $(\gm-\{1\})\times\gm$, having singularities along the curve $C_{\lambda}:=(\pi_2\phi_n)^{-1}(\gamma_{n-1})$ with equation $\lambda=\gamma_{n-1}z^{w_n}(1-z)^{d_{n-1}}$. Fixing $\mu$ in $\gm$, the intersection of $C_\lambda$ and the line $\lambda=\mu$ is a set $C_{\mu}$ of $d_n$ points whenever $\mu\neq\gamma_n$; it is formed by $d_n-1$ points otherwise.

Now, for any value $\mu$ of $\lambda$, consider the cartesian diagram
$$\xymatrixcolsep{4pc}\xymatrix{\ar @{} [dr] |{\Box} (\gm-\{1\})\times\{\mu\} \ar[d]_{\pi_2} \ar[r]^{\id\times i_{\mu}} & (\gm-\{1\})\times\gm \ar[d]^{\pi_2}\\ \{\mu\} \ar[r]^{i_{\mu}} & \gm}.$$
By applying base change,
$$i_{\mu}^+\pi_{2,+}(\pi_2\phi_n)^+\cG_{n-1}\cong \pi_{2,+}(\id\times i_{\mu})^+(\pi_2\phi_n)^+\cG_{n-1}.$$

Assume that $\mu\neq\lambda_0$. Since $\pi_{2,+}(\pi_2\phi_n)^+\cG_{n-1}$ has no singularities at $\mu$, $i_{\mu}^+\pi_{2,+}(\pi_2\phi_n)^+\cG_{n-1}$ is a single $\kk$-vector space, and its dimension equals the generic rank of $\pi_{2,+}(\pi_2\phi_n)^+\cG_{n-1}$.

On the other hand, the morphism $\pi_2\phi_n(\id\times i_{\mu})$ is étale, so the complex $(\id\times i_{\mu})^+(\pi_2\phi_n)^+\cG_{n-1}$ is actually a single $\cD_{\gm-\{1\}}$-module of rank $d_{n-1}-e_{n-1}$. Its Euler-Poincaré characteristic will be
$$-\dim\cH^0\pi_{2,+}(\id\times i_{\mu})^+(\pi_2\phi_n)^+\cG_{n-1}=-\dim i_{\mu}^+\pi_{2,+}(\pi_2\phi_n)^+\cG_{n-1}= -\rk\pi_{2,+}(\pi_2\phi_n)^+\cG_{n-1}.$$

Recall that $\cG_{n-1}$ has no punctual part, so neither $(\id\times i_{\mu})^+(\pi_2\phi_n)^+\cG_{n-1}$ has. Then it will be the intermediate extension of its restriction to $(\gm-\{1\})-C_{\mu}$ and applying \cite[Thm. 2.9.9]{KaESDE},
$$\chi\left((\id\times i_{\mu})^+(\pi_2\phi_n)^+\cG_{n-1}\right)=-(d_{n-1}-e_{n-1})-|C_{\mu}|=-(d_n+d_{n-1}-e_{n-1}),$$
whenever $\mu\neq\gamma_n$, where the last summand is a consequence of [\textit{op. cit.}, Lem. 2.9.7] applied to $\cH_{n-1}$. In conclusion, the generic rank of $\cG_n$ must be $d_n+d_{n-1}-e_{n-1}-d_{n-1}+e_{n-1}-1=d_n-1$. Now, if the point $\lambda_0$ where $\cG_n$ has a singularity were different from $\gamma_n$, then we could do the same process above and see that
$$\chi\left(i_{\mu}^+(\pi_2\phi_n)^+\cG_{n-1}\right)= -(d_n+d_{n-1}-e_{n-1}-1),$$
which cannot be possible. Therefore, $\lambda_0=\gamma_n$.
\end{proof}

\section{Exponents of $\cG_n$ and proof of the main result}\label{fourier}

In the previous section we proved every part of the main theorem that could be accomplished by the inductive process; we will provide in this one the proof of Theorem \ref{teorema2} by using all the information given by the results of section \ref{induccion} and finding the values for the exponents of $\cG_n$ at both the origin and infinity.

\vspace{.3cm}

\noindent\textit{Proof of Theorem \ref{teorema2}.} Assume again that $\gcd(w_0,\ldots,w_n)=1$. The statement about the constant part of $K_n$ and the existence of $\cG_n$ and the exact sequence
$$0\lra\cG_n\lra\cH^0K_n\lra\cO_{\gm}^n\lra0$$
is Proposition \ref{constkn}.

Now by Proposition \ref{rksing}, $\cG_n$ is a regular $\cD_{\gm}$-module of Euler-Poincaré characteristic $-1$, of rank $d_n-1$ and singularities at the origin, $\gamma_n$ and infinity, so by Propositions \ref{charcero}, \ref{isohyp} and \ref{hyperLS} its semisimplification will consist of $k$ Kummer $\cD$-modules and an irreducible hypergeometric $\cD$-module of type $(d_n-1-k,d_n-1-k)$, with a singularity at $\gamma_n$, that is to say,
$$\cG_n^{\text{ss}}=\bigoplus_{\alpha\in A}\cK_{\alpha}\oplus\cH_n,$$
where $|A|=k$ and $\cH_n$ is an irreducible hypergeometric $\cD$-module of type $(d_n-1-k,d_n-1-k)$ of the form $\cH_{\gamma_n}(\alpha_i;\beta_j)$.

Since we want to characterize the Kummer $\cD$-modules and $\cH_n$, we only need, by virtue of Propositions \ref{exponentes} and \ref{isohyp}, to find the exponents of $\cG_n$ at both zero and infinity. Thus those occurring at both points will determine the Kummer summands, and the rest will be the parameters of $\cH_n$ (cf. \cite[Cor. 3.7.5.2]{KaESDE}).

By Remark \ref{semisimple}, $\bar{K}_n$ cannot have a singularity at the origin. Since $j^+\bar{K}_n=\iota_n^+K_n$ by construction, the $d_n-1$ exponents at the origin of $\cG_n$ can only be fractions of denominator equal to $d_n$, say with numerators $k_1,\ldots,k_{d_n-1}\in\{1,\ldots,d_n\}$.

In fact, we can deduce from Theorem \ref{interesante} that the exponents both at the origin and infinity should be, up to a parameter $b$ coming from tensoring with a Kummer $\cD$-module $\cK_{b/d_n}$, all of the exponents but one of the hypergeometric $\cD_{\gm}$-module
$$\cH_{\gamma_n} \left(\cancel\left(\frac{1}{w_0},\ldots,\frac{w_0}{w_0},\ldots, \frac{1}{w_n},\ldots,\frac{w_n}{w_n}; \frac{1}{d_n},\ldots,\frac{d_n}{d_n}\right)\right).$$
The one missing will be of the form $a/d_n$, because we know that the rank of $\cG_n$ is $d_n-1$ thanks to Proposition \ref{rksing}. Summing up, a list of the exponents (of some representative of them in $\kk$ in fact) is, respectively at the origin and infinity,
$$W_n:=\left(\frac{j}{w_i}+\frac{b}{d_n}\,\text{ : }\, i=0,\ldots,n,j=1,\ldots,w_i\right) -\left(\frac{a+b}{d_n}\right),$$
$$\left(\frac{k}{d_n}\,\text{ : }\, k=1,\ldots,d_n\right) -\left(\frac{a+b}{d_n}\right).$$
Now we ought to see which possible values of $a$ and $b$ can really occur. We will see that we can take $a$ and $b$ to be both of them $d_n$ by using the work done at \cite[\S4]{CaKoszul}.

From [\textit{op. cit.}, Thm. 1.2] we know that the exponents at the origin of $\cH^0\lambda_{n,+}\cO_{Z_n}$ can only be those of the form $j/w_i\!\mod\ZZ$, and each of them which is not integer occurs with the same multiplicity (without counting coincidences among some $j/w_i$ for different values of $j$ and $i$).

Let us show that the multiplicity at the origin of one is $n$ at least. Indeed, assume that some $w_i$ is strictly bigger than 1 and that the fixed multiplicity for each noninteger expression $j/w_i$ is some $m>1$. Then there will exist a $\eta_0=j_0/w_{i_0}$ for some admissible choice of $i_0\in\{0,\ldots,n\}$ and $j_0\in\{1,\ldots,w_{i_0}\}$ such that there will be $m$ different admissible pairs $(i_1,j_1),\ldots,(i_m,j_m)$ with $\eta_0=j_k/w_{i_k}+b/d_n$ for each $k=1,\ldots,m$. But then this implies that all those $j_k/w_{i_k}$ are the same number, so we must be able to find an exponent $\eta_1=j_1/w_{i_1}$ with multiplicity $m^2$. Therefore, we can find $m^2$ more admissible pairs of the form $(i'_1,j'_1),\ldots,(i'_{m^2},j'_{m^2})$ such that $\eta_1=j'_k/w_{i'_k}+b/d_n$ for each $k=1,\ldots,m^2$. As a consequence, all of the $j'_k/w_{i'_k}$ coincide and so there must be an exponent $\eta_2=j'_1/w_{i'_1}$ of multiplicity $m^3$.

If we continue this argument, we must reach a point in which there are no more new admissible pairs of the form $(i,j)$, so since the exponents are not all of them equal (because $w_i>1$ for certain $i=0,\ldots,n$), necessarily $\eta_0=\eta_l$ for some $l>0$, but this is a contradiction since they had different multiplicities. In conclusion, either the multiplicity of each noninteger exponent is one, so that the multiplicity of one is $n$, or the only exponent is one and so its multiplicity is $d_n-1$.

That implies that there is no possible choice of $a$ and $b$ other than $a=b=d_n$. Indeed, if $a\neq d_n$, then the value $b/d_n$ appears at $W_n$ with multiplicity $n+1$, and if it is not $1$, then there must exist $n+1$ different $j_k<w_k$ such that $j_k/w_k=j_l/w_l=b/d_n$, but this is impossible because $\gcd(w_i)=1$ (cf. \cite[Lem. 4.2]{CaKoszul}). Therefore, $b=d_n$, but in that case, since $a\neq d_n$, there would exist an exponent at both the origin and infinity equal to $1$, and then $\cG_n$ would have a composition factor equal to $\cO_{\gm}$, which cannot happen by Proposition \ref{constkn}.

Thus $a=d_n$. Then if $b\neq d_n$ there must exist again an exponent at both the origin and infinity equal to $1$, because its multiplicity at both points is positive, but as before this is a contradiction, so in conclusion $a=b=d_n$.

Once we have proved everything above for $e_n=1$, the statement for a bigger $e_n$ follows easily taking into account that the direct image $[e_n]_+$ of a irreducible hypergeometric $\cD_{\gm}$-module remains both irreducible and hypergeometric and the exponents of the direct image are all the classes of $\kk/\ZZ$ such that multiplying by $e_n$ we recover the original ones. With this last step we can finish the inductive step and use it in the results of section \ref{induccion}.

Last, the exact sequence of $\cG_n$ is corollary \ref{corosucGn}, once we have proved everything else in Theorem \ref{teorema2} about $K_r$ for $r=n,n+1$.\hfill$\Box$

\begin{nota}
One could wonder when $K_n$ or $\bar{K}_n$ have integer exponents (or, equivalently in the complex analytical setting, unipotent formal local monodromy) at the origin and infinity, respectively. This is a remarkable property (and even rare, in the case of $K_n$), and can be characterized in terms of the $w_i$. The proofs can be found, respectively, in \cite[Lem. 8.8]{KaAnother} and \cite[Rem. 2.2]{DouSab}.
\end{nota}

\bibliographystyle{amsalpha}
\bibliography{DworkTesis}

\end{document}